\newtheorem{theorem}{Theorem}[section]
\newtheorem{lemma}[theorem]{Lemma}
\newtheorem{proposition}[theorem]{Proposition}
\newtheorem{corollary}[theorem]{Corollary}
\theoremstyle{definition}
\newtheorem{definition}[theorem]{Definition}
\newtheorem{example}[theorem]{Example}
\newtheorem{remark}[theorem]{Remark}
\newtheorem{notation}[theorem]{Notation}
\newcommand{\MD}[1]{{\color{blue}#1}}
\newcommand{\G}{\Gamma}
\newcommand{\La}{\Lambda}
\newcommand{\ep}{\varepsilon}
\newcommand{\N}{\mathbb{N}}
\newcommand{\Z}{\mathbb{Z}}
\newcommand{\Q}{\mathbb{Q}}
\newcommand{\R}{\mathbb{R}}
\newcommand{\C}{\mathbb{C}}
\newcommand{\HH}{\mathbb{H}}
\DeclareMathOperator{\im}{Im}
\DeclareMathOperator{\ch}{ch}
\DeclareMathOperator{\Ext}{Ext}
\DeclareMathOperator{\coker}{coker}
\newcommand{\K}{\mathcal{K}}
\newcommand{\Tr}{\mathrm{Tr}}
\newcommand{\Aut}{\mathrm{Aut}}
\DeclareMathOperator{\Hom}{Hom}
\newcommand{\id}[1]{\mathrm{id}_{#1}}
\newcommand{\tensor}{\otimes}
\newcommand{\openp}{\pmb{(}}
\newcommand{\closep}{\pmb{)}}
\newcommand{\Proj}[1]{\mathcal{P}r(#1)}
\newcommand{\uProj}[1]{\mathcal{P}r_0(#1)}
\renewcommand{\geq}{\geqslant}
\renewcommand{\leq}{\leqslant}
\definecolor{darkpastelred}{rgb}{0.76, 0.23, 0.13}
\definecolor{darkred}{rgb}{0.55, 0.0, 0.0}
\definecolor{darkmagenta}{rgb}{0.55, 0.0, 0.55}
\definecolor{coolblack}{rgb}{0.0, 0.18, 0.39}
\definecolor{ceruleanblue}{rgb}{0.16, 0.32, 0.75}
\let\le\leqslant
\let\leq\leqslant
\title[Almost representations]{On Chern classes of almost representations
}
\author{Marius Dadarlat and Forrest Glebe}
\date{\today}
\thanks{M.D. was partially supported by NSF grant \#DMS-2247334
\\
F.G. was supported by a Ross-Lynn grant in the 2023-2024 academic year.}
\begin{document}
	
	\maketitle
   
\begin{abstract}
   For a discrete group $\Gamma$, we study vector bundles $E_\rho$ on compact subsets of $B\Gamma$ associated to almost representations $\rho:\Gamma \to U(n)$. We compute the first Chern class of $E_\rho$ in terms of $\rho$. When $\rho$ is both projective and almost multiplicative, we determine its Chern character. These invariants yield obstructions to perturbing almost representations to those arising from projective representations. For residually finite amenable groups, the $K$-theory classes of $E_\rho$ classify almost representations up to stable equivalence. Finally, for $\mathbb{Z}^d$, $\mathbb{Z}\times \mathbb{H}_3$, and $\mathbb{H}_3\times \mathbb{H}_3$, we construct explicit almost representations with prescribed Chern classes.
\end{abstract}
\tableofcontents
		\section{Introduction}

	Let $\G$ be a discrete countable group. 
Let $S$ be a finite subset of $\G$ and let $\delta>0$.
    A unital map $\rho:\G \to U(n)$ is called an $(S,\delta)$-representation if
  \[\|\rho(ab)-\rho(a)\rho(b)\|<\delta, \,\, \forall a,b \in S.\] 
  We call such a map an almost representation, even when $S$ and $\varepsilon$ are not explicitly specified, provided that $S$ is sufficiently large and $\varepsilon$ sufficiently small for the context. In the literature, the same notion is also referred to as a quasi-representation, \cite{CGM:flat}, \cite{BB}.
   A sequence of unital maps $\{\rho_n:\G \to U(k_n)\}_n$ will be called an asymptotic representation of $\G$ if $$\lim_{n\to \infty} \|\rho_n(ab)-\rho_n(a)\rho_n(b)\|=0, \,\,\forall a,b \in \G.$$
   The study of almost representations of groups with respect to the operator norm goes back to Kazhdan \cite{Kazhdan-epsilon}, Voiculescu \cite{Voi:unitaries}, Connes, Gromov and Moscovici \cite{CGM:flat} and Exel and Loring \cite{Exel-Loring:inv}.
  We study asymptotic and almost representations using K-theory methods and quasidiagonality. Vector bundles on compact subspaces of the classifying space $B\G$ play a key role. The idea of constructing almost representations from almost flat bundles was introduced in \cite{CGM:flat}.
   The approximate monodromy correspondence between almost flat bundles and almost representations was further studied in \cite{Hanke-Schick}, \cite{Hanke},
\cite{Carrion-Dadarlat}, \cite{Hunger} and \cite{Kubota3}.
 A functional-analytic approach to constructing almost representations and almost flat bundles was introduced in \cite{BB} and \cite{AA}, using the notion of quasidiagonal $K$-homology classes for group $C^*$-algebras. This method draws upon key results by Kasparov \cite{Kas:inv}, Yu \cite{Yu:BC}, and Tu \cite{Tu:gamma}, particularly in relation to the Novikov and Baum-Connes conjectures. The applicability of this technique, revisited in \cite{DadCarrion:almost_flat}, was significantly broadened by Kubota in \cite{Kubota2} through the consideration of quasidiagonal $C^*$-algebras that are intermediate between the full and the reduced group $C^*$-algebras; see also \cite{CCC}, \cite{EEE} for further work in this direction with applications to non-stability.
  
It is convenient to work with a CW-complex model of $B\G$ as described in \cite[Example 1B.7]{hatcher} as we elaborate in Section~\ref{sec:2}.  This allows us to identify the chain complex which defines the simplicial homology of  $B\G,$  with the non-homogeneous bar complex that defines the homology of $\G$. We will also fix an exhaustion of $B\G$ by an increasing sequence of finite subcomplexes $(Y_n)_n.$
Let us recall that Atiyah-Segal's construction which associates to a finite dimensional representation of $\G,$ a flat vector bundle on $B\G$  generalizes to almost representations. If the classifying space $B\G$ admits a realization as a finite CW-complex,
   then, there exist a finite subset $S$ of $\G$  and  $\delta>0$ such that one can associate to any $(S,\delta)$-representation $\rho:\G \to U(n)$, a rank-$n$  almost flat bundle $E_\rho$ over $B\G$, see for example \cite{AA}, \cite{EEE}. The construction of $E_\rho$ will be reviewed subsequently. For groups $\G$ that are quasidiagonal and admit a $\gamma$-element, it is known that $K^0(B\G)$ is generated by almost flat vector bundles $E_\rho$ as above, \cite{AA}, \cite{Kubota1}, \cite{CCC}. Since flat complex vector bundles have trivial rational Chern classes, \cite{Milnor}, it follows that if $[E_\rho]-[n]\neq 0$ in $K^0(B\G)\otimes \Q$, then
  the almost representation $\rho$  is not a small perturbation of  a genuine representation, see \cite{CCC}. Specifically, there are $\ep,\delta_0>0$ and  finite sets $F,S_0\subset \G$ such that  if $\rho:\G \to U(n)$ is an $(S,\delta)$-representation  with $\delta\leq\delta_0$ and  $S \supseteq S_0$, then for any homomorphism $\pi:\G \to U(n),$
  \[\max_{s\in F} \|\rho(s)-\pi(s)\|\geq \ep.\]
  
  One can extend the construction above to general groups with $B\G$ an infinite CW complex. Thus, for any compact subspace $Y\subset B\G,$ there are $S$ and $\delta$ such that to $(S,\delta)$-representation $\rho:\G \to U(n)$ we can associate a rank-$n$ vector bundle $E_\rho^Y$ over $Y$, see Section~\ref{Proof1}.
 
  This leads naturally to the question of computing invariants of the bundle $E_\rho^Y$ in terms of $\rho$. Using previous work \cite{BB}, \cite{DDD}, \cite{arXiv:2204.10354}, \cite{EEE}, we compute the first Chern class of $E^Y_\rho$ as described below. Moreover, if $\rho$ is not only an almost representation, but also a projective representation, in the sense that {$\rho(a)\rho(b)\rho(ab)^{-1}=\lambda(a,b)1_n,$ with $\lambda(a,b)\in \mathbb{T}$ for all $a,b\in \G$, }
   then we compute the rational Chern classes of $E^Y_\rho$.
  Let  $j$ denote the canonical isomorphism  $j:H^2(\G,Q)\to H^2(B\G,Q),$ with $Q$ an abelian group. As explained in Section~\ref{sec:2}, we shall make no notational distinction between and element $x$ and its image $j(x)$. A local 2-cocyle on $\G$ is a map that satisfies  the 2-cocycle equation for finitely many triples $a,b,c \in \G.$
  This notion is discussed in Section~\ref{sec:2} along with the corresponding map $j:Z^2(\G,Q)_Y \to Z^2(Y,Q)$.
  We write $c_i(E)_{\Z}$ for the i$^{th}$ integral Chern class and $c_i(E)$ for the i$^{th}$ Chern class considered as a rational or real cohomology class, depending on context.
  
 The first half of the paper is devoted to computation of invariants of $E^Y_\rho$.
  \begin{theorem}\label{thm:c11} Let $\G$ be a countable discrete group. For any finite subcomplex $Y$ of $B\G$,
		there are  a finite set $S\subset \G$ and $\delta>0$ such that for any $(S,\delta)$-representation $\rho:\G \to U(n),$ the vector bundle $E^Y_\rho$ is well-defined and
		
		(1)  The equation
\begin{equation}\label{coco}
  \omega(a,b):=\frac{1}{2\pi i}\Tr(\log(\rho(a)\rho(b)\rho(ab)^{-1})),
\end{equation}
		defines a local 2-cocycle $\omega \in Z^2(\G,\R)_{Y}$  with the property that
		$c_1(E^Y_\rho)=[\omega]$ in $H^2(Y,\R).$
		{Moreover, for any integral 2-cycle  $c\in Z_2(Y,\Z),$ the corresponding Kronecker pairing takes integral values, $\langle[\omega], [c]\rangle\in \Z$.}
		
		(2)  If $\rho:\G \to U(n)$ is both an $(S,\delta)$-representation and a projective representation, then the total Chern class of $E_\rho^Y$ is
  $$c(E_\rho^Y)=  \left(1+\frac{1}{n}c_1(E^Y_\rho)\right)^n=\left(1+\frac{1}{n}[\omega]\right)^n$$   
        and  the Chern character is $$\ch(E^Y_\rho)=ne^{\frac{1}{n}c_1(E^Y_\rho)}=n \, e^{{ \frac{1}{n}[\omega]}}.$$
			\end{theorem}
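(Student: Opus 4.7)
Write $\sigma(a,b) := \rho(a)\rho(b)\rho(ab)^{-1}$ for the defect of multiplicativity. The plan is to choose $S$ and $\delta$ so that for every triple $(a,b,c)$ entering the 3-skeleton of $Y$, each of $\sigma(a,b),\sigma(ab,c),\sigma(b,c),\sigma(a,bc)$ lies in a neighborhood of $1_n$ on which the principal branch of $\log$ is defined and the map $u\mapsto \Tr\log(u)=\log\det(u)$ is a local homomorphism into $i\R$. One then verifies the local cocycle condition directly from the algebraic identity
\[
\sigma(a,b)\,\sigma(ab,c)=\rho(a)\,\sigma(b,c)\,\rho(a)^{-1}\,\sigma(a,bc),
\]
which is a one-line unraveling of the definitions. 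Applying $\tfrac{1}{2\pi i}\Tr\log$ to both sides and using conjugation invariance of the trace produces $\omega(a,b)+\omega(ab,c)=\omega(b,c)+\omega(a,bc)$, i.e.\ $(\delta\omega)(a,b,c)=0$. Hence $\omega\in Z^2(\G,\R)_Y$.

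To identify $[\omega]$ with $c_1(E_\rho^Y)$, I would match $\omega$ against a simplicial representative of the first Chern class. Using the construction of $E_\rho^Y$ reviewed in Section~\ref{Proof1}, the transition across each edge of the universal cover labeled by an element $g\in\G$ is $\rho(g)$, so the holonomy around the boundary of a 2-simplex $[a|b]$ is precisely $\sigma(a,b)$. The standard simplicial/\v{C}ech Chern--Weil formula (as developed in \cite{BB}, \cite{DDD}, \cite{arXiv:2204.10354}, \cite{EEE}) then represents $c_1(E_\rho^Y)$ on this simplex by $\tfrac{1}{2\pi i}\Tr\log\sigma(a,b)=\omega(a,b)$, giving $c_1(E_\rho^Y)=[\omega]$ in $H^2(Y,\R)$. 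Integrality of $\langle[\omega],[c]\rangle$ for $[c]\in Z_2(Y,\Z)$ is then automatic, as this pairing equals the integer $\langle c_1(E_\rho^Y)_\Z,[c]\rangle$.

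For part~(2), the projectivity $\rho(a)\rho(b)=\lambda(a,b)\rho(ab)$ forces $\sigma(a,b)=\lambda(a,b)\,1_n$ to be scalar, so in the simplicial Chern--Weil picture the ``curvature 2-cochain'' of $E_\rho^Y$ has the form $\alpha\cdot I_n$ for some scalar 2-cochain $\alpha$. Then
\[
c(E_\rho^Y)=\det\!\bigl(I+\tfrac{i}{2\pi}\alpha\, I_n\bigr)=\bigl(1+\tfrac{1}{n}c_1(E_\rho^Y)\bigr)^n, \qquad \ch(E_\rho^Y)=\Tr\exp\!\bigl(\tfrac{i}{2\pi}\alpha\, I_n\bigr)=n\,e^{c_1(E_\rho^Y)/n},
\]
as required. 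The main obstacle I anticipate is making the simplicial Chern--Weil step fully rigorous in both parts: since $E_\rho^Y$ is constructed combinatorially rather than as a smooth bundle with a smooth connection, the identification of the $\Tr\log$ cocycles with genuine Chern classes (and their powers, which is what lets the scalar-curvature computation determine the higher $c_k$ in part~(2)) requires a careful adaptation of the simplicial/\v{C}ech framework of the cited references to the particular transition data furnished by $\rho$.
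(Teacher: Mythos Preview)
Your verification that $\omega$ is a local $2$-cocycle is essentially the paper's Proposition~\ref{prop:2cocycle}, so that step is fine.

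For the identification $c_1(E_\rho^Y)=[\omega]$ in part~(1), the paper takes a quite different route from your proposed direct simplicial Chern--Weil matching. Rather than exhibiting $\omega$ as a \v{C}ech or simplicial representative of $c_1$, the paper argues by pairing both sides against every integral $2$-cycle $[c]$ and invoking the universal coefficient theorem. The pairing $\langle[\omega],[c]\rangle$ is identified, via Proposition~\ref{prop:2.3+} and Theorem~\ref{thm:ddd}, with the index-theoretic quantity $\rho_\sharp(\alpha^\G([c]))$; the latter is then shown to equal $\langle c_1(E_\rho^Y),[c]\rangle$ using the $K$-theory pairing formula $\rho_\sharp(\mu_1(z))=\langle[E_\rho^Y],z\rangle$ from \cite[Thm.~3.2]{BB} together with Matthey's rational section $\beta^Y$ of the Chern character. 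Your approach could in principle be made to work, but note that $E_\rho^Y$ is built from \emph{perturbed} transition data $u_{ij}$ (Proposition~\ref{near-cocycle}), not from $\rho(s_{ij})$ itself, so the holonomy around a $2$-simplex is only approximately $\sigma(a,b)$; turning this into an equality of cohomology classes without passing through $K$-theory requires some care you have not supplied.

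Part~(2) has a genuine gap. Your formula $c(E)=\det(I+\tfrac{i}{2\pi}\alpha I_n)$ presupposes a smooth connection whose curvature is literally a scalar multiple of the identity. No such connection is available here: $E_\rho^Y$ is a topological bundle defined by a perturbed \v{C}ech cocycle $(u_{ij})$, and even though the \emph{defect} $\sigma(a,b)$ of the unperturbed data $\rho(s_{ij})$ is scalar, the perturbation to a genuine cocycle need not preserve any scalar-curvature structure. The paper's argument avoids Chern--Weil entirely. The key observation is that projectivity of $\rho$ means $\pi\circ\rho:\G\to PU(n)$ is an honest homomorphism, so $\pi(\rho(s_{ij}))$ is already a \emph{constant} $PU(n)$-valued cocycle. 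Proposition~\ref{bundles} then shows that the projective bundle $P(E_\rho^Y)$, whose cocycle $\pi(u_{ij})$ is close to $\pi(\rho(s_{ij}))$, is isomorphic to the flat $PU(n)$-bundle defined by $\pi(\rho(s_{ij}))$. Proposition~\ref{prop:projB} (a purely topological statement about bundles whose projectivization is flat, proved via the fibration $B\Z/n\to BSU(n)\to BPU(n)$ and the vanishing of $f^*\circ\pi^*$ on positive-degree classes) then forces $c_k(E_\rho^Y)=\tfrac{1}{n^k}\binom{n}{k}c_1(E_\rho^Y)^k$. This flatness-of-$P(E)$ mechanism is the missing idea in your proposal.
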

	 When $B\G$ admits a compact model, $E_\rho$ can be constructed as a bundle over $B\G$ and $[\omega]\in H^2(B\G,\R)$.  In general,
 to detect the nontriviality of $c_1(E^Y_\rho),$
   we pair $[\omega]$ with 2-homology classes. This recovers the winding number invariants of the type used in \cite{Kazhdan-epsilon}, \cite{Exel-Loring:inv} and \cite{ESS-published}.
  The Exel-Loring formula \cite{Exel-Loring:inv}  shows the equality of two  invariants associated to a pair of almost commuting unitaries regarded as an almost representation of $\Z^2$.
A generalization of the Exel-Loring formula to almost representations of
arbitrary discrete groups $\G$ was given in \cite{DDD} in the form of an index formula.  This formula, which we will review in Theorem~\ref{thm:ddd}, defines 
a pairing $\pmb{\langle} \rho, r\pmb{\rangle}$ between sufficiently multiplicative almost representations \(\rho: \Gamma \to U(k)\) and elements $[r]$ of the group homology  \(H_2(\Gamma, \mathbb{Z})\) realized through the Hopf formula for 2-homology. We write this symbolically as
\[\{\text{Almost representations}\}\times H_2(\G,\Z)^{Hopf}\to \Z,\quad  (\rho,[r])\mapsto \pmb{\langle} \rho, r\pmb{\rangle}.\]

Another pairing $\openp \rho ,c\closep,$ between almost representations \(\rho: \Gamma \to U(k)\) and elements of \(H_2(\Gamma, \mathbb{Z}),\) was introduced in \cite{arXiv:2204.10354} using the bar-resolution definition of \(H_2(\Gamma, \mathbb{Z})\): 
\[\{\text{Almost representations}\}\times H_2(\G,\Z)\to \Z,\quad  (\rho,[c])\mapsto \openp \rho,c\closep.\] 
We review this second pairing in equation   \eqref{eq:111f} below. 
In Section~\ref{sec:pairings} we show that the two pairings can be identified modulo the isomorphism $\varphi:H_2(\Gamma,\mathbb{Z})^{Hopf} \to H_2(\Gamma,\mathbb{Z})$, in the sense that \[\pmb{\langle} \rho, r\pmb{\rangle}=\openp \rho,\varphi(r)\closep.\]
 Furthermore, we give the following geometric interpretation of the pairings.
If $B\G$ admits a compact model. Then 
 there exist $\delta>0$ and a finite set $S\subset \G$ such that for any $(S,\delta)$-representation  $\rho:\G\to U(n),$  $\openp \rho,c\closep$ coincides with the Kronecker pairing between the first Chern class of $E_\rho$ and $[c]\in H_2(B\G,\Z)$:
\[\openp \rho,c\closep=\langle c_1(E_\rho), [c]\rangle.\]
If $B\G$ is not compact, we have a similar interpretation that involves $c_1(E_\rho^Y),$ where $Y$ is a finite subcomplex of $B\G$ that supports the 2-cycle $c$, as discussed in the proof of Theorem~\ref{thm:c1}. 
\vskip 8pt
 {In the second half of the paper we study the extent to which the bundle $E_\rho$ determines $\rho$.} 
  We establish the following stable uniqueness result, showing that the associated vector bundles classify almost representations up to stable approximate unitary equivalence under suitable conditions.
  \begin{theorem}\label{thm:unique}  Let $\G$ be a torsion-free residually finite countable amenable group. For any finite set $F\subset \G$ and any $\ep>0,$ there exist a finite set $S\subset \G$, $\delta>0$ and a compact subspace $Y\subset B\G$
such that 
  for any two $(S,\delta)$-representations
$\rho,\rho':\G \to U(k)$ with $[E_\rho^Y]=[E_{\rho'}^Y]$ in $K^0(Y),$ there is a representation $\pi:\G \to U(m)$ and a unitary $u\in U(k+m)$ such that 
\begin{equation}\label{eq:ssu}
\|u(\rho(s)\oplus \pi(s))u^*-\rho'(s)\oplus \pi(s)\|<\ep,\quad \forall s\in F.
	\end{equation}
    \end{theorem}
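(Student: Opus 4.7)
The plan is to reduce the statement to a stable uniqueness theorem for approximate $*$-homomorphisms of the group $C^*$-algebra, and then to apply a $KK$-theoretic classification result in the spirit of Dadarlat-Eilers and Lin. Amenability gives $C^*(\G)=C^*_r(\G)$ nuclear and satisfying the UCT; residual finiteness together with amenability makes $C^*(\G)$ quasidiagonal and, crucially, supplies a canonical family of genuine finite-dimensional representations $\sigma_i:\G\to U(m_i)$ coming from finite quotients of $\G$. Torsion-freeness of $\G$ will enter through the Baum-Connes assembly map $\mu:RK_0(B\G)\to K_0(C^*(\G))$, which is then an isomorphism in a clean form uninhibited by torsion subtleties.

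Given $F\subset\G$ finite and $\ep>0$, I would first fix a finite set $G\subset C^*(\G)$ containing $F$ together with a tolerance $\ep'>0$ to be determined. For $S$ sufficiently large and $\delta$ sufficiently small relative to $G$ and $\ep'$, every $(S,\delta)$-representation $\rho:\G\to U(k)$ extends (by an averaging procedure relying on nuclearity of $C^*(\G)$) to an $(\ep',G)$-multiplicative unital completely positive map $\tilde\rho:C^*(\G)\to M_k$, and similarly for $\rho'$. The next step is to identify the $KK$-theoretic invariant of $\tilde\rho$ encoded in $KK(C^*(\G),\C)$ with the bundle class $[E_\rho^Y]\in K^0(Y)$ through the Kasparov-Mishchenko line-bundle pairing. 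Choosing the finite subcomplex $Y\subset B\G$ large enough relative to the approximation scale, the hypothesis $[E_\rho^Y]=[E_{\rho'}^Y]$ implies equality of the corresponding index pairings against all $K$-homology classes supported in $Y$. Since $\mu$ is an isomorphism and $B\G$ is exhausted by finite subcomplexes, enlarging $Y$ suffices to force $[\tilde\rho]=[\tilde\rho']$ in the quotient of $KK(C^*(\G),\C)$ relevant to the uniqueness theorem.

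With the $KK$-equality in hand, I would then invoke a stable uniqueness theorem for approximate $*$-homomorphisms from a nuclear quasidiagonal UCT algebra into matrix algebras: there exists an absorbing $*$-homomorphism $\tilde\pi:C^*(\G)\to M_m$ and a unitary $u\in U(k+m)$ such that $\|u(\tilde\rho(a)\oplus\tilde\pi(a))u^*-\tilde\rho'(a)\oplus\tilde\pi(a)\|<\ep$ for all $a\in G$. The essential use of residual finiteness is that the absorbing $\tilde\pi$ can be taken to be a direct sum $\tilde\sigma_{i_1}\oplus\cdots\oplus\tilde\sigma_{i_N}$ of finite-quotient representations and thus corresponds to a genuine representation $\pi:\G\to U(m)$; one feeds in $N$ large enough that such a sum is $KK$-theoretically absorbing on $G$ at scale $\ep$. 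Restricting the resulting inequality from $G$ to $F\subset G\cap\G$ yields the desired bound \eqref{eq:ssu}.

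The main obstacle lies in verifying that a direct sum of finite-quotient representations is $KK$-absorbing in the precise sense needed, and in coordinating the many parameter choices. One must choose $Y$ large enough both to detect $KK$-equality via the hypothesis $[E_\rho^Y]=[E_{\rho'}^Y]$ and to ensure $E_\rho^Y$ is well-defined for the chosen $(S,\delta)$; one must choose $G\subset C^*(\G)$ large enough to capture a suitable $C^*$-algebra neighborhood of $F$ after the tensoring with $\tilde\pi$; and one must choose $N$ large enough that $\tilde\sigma_{i_1}\oplus\cdots\oplus\tilde\sigma_{i_N}$ absorbs on $G$ to within $\ep$. These interdependencies can be resolved by a diagonal argument exploiting the countable exhaustion of $B\G$ by finite subcomplexes and the fact that the $KK$-data at each finite stage is finitely controlled. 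The torsion-free and amenable hypotheses are both essential — the former to get a clean identification between $K^0(Y)$-data and $KK$-data via Baum-Connes, and the latter for nuclearity, quasidiagonality, and for averaging almost representations into u.c.p.\ maps.
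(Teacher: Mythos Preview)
Your overall architecture matches the paper's: reduce to $C^*(\G)$, use the Baum--Connes isomorphism to relate bundle data to $KK$-data, and invoke a stable uniqueness theorem with a residually-finite-dimensional stabilizer. But there is a genuine gap at the central step, and a conceptual wobble in how you set it up.

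The gap is in your claim that ``enlarging $Y$ suffices to force $[\tilde\rho]=[\tilde\rho']$ in the quotient of $KK(C^*(\G),\C)$ relevant to the uniqueness theorem.'' A single $(S,\delta)$-representation does not have a well-defined class in $KK(C^*(\G),\C)$, so the equation $[\tilde\rho]=[\tilde\rho']$ has no meaning as stated. Even if you interpret it in $KL=KK/\overline{\{0\}}$, equality of bundle classes on a finite $Y$ only controls index pairings against $K_*(Y)$, which via the UCT governs the $\Hom$-part of $KK$; you have said nothing about the $\Ext$-part, and $\Ext(K_*(C^*(\G)),\Z)$ need not vanish. The ``diagonal argument'' you allude to is exactly where the real work lies, and you have not indicated what it would prove.

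The paper resolves this by abandoning the attempt to assign $KK$-classes to individual almost representations. It argues by contradiction: a failure produces sequences $\rho_n,\rho_n'$ and hence genuine $*$-homomorphisms $\dot\rho,\dot\rho':C^*(\G)\to B=\prod M_{k_n}/\bigoplus M_{k_n}$. Lemma~\ref{lemma:22} (relying on the injectivity Lemma~\ref{lemma:ML}) shows that bundle equality on each $Y_n$ forces $[\dot\rho]-[\dot\rho']$ into the kernel of the co-assembly map $KK(C^*(\G),B)\to\varprojlim K^*(Y_n;B)$. Proposition~\ref{prop=amena} identifies this kernel with $\mathrm{Pext}(K_*(C^*(\G)),K_*(B))$, which equals $\overline{\{0\}}$ in the $KK$-topology by \cite[Thm.~4.1]{Dad:kk-top}. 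Only then does the stable uniqueness result \cite[Cor.~3.8]{Dad:kk-top} apply, with the RFD structure of $C^*(\G)$ supplying the finite-dimensional $\pi$. Your proposal is missing this entire Pext analysis, and the phrase ``absorbing $*$-homomorphism $\tilde\pi:C^*(\G)\to M_m$'' is not meaningful for finite $m$; the relevant absorption happens at the level of $B$, not $M_m$.
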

    The same conclusion remains valid even if $\Gamma$ is not residually finite; however, in that case we need to allow $\pi$ to be an $(S,\delta)$-representation.
   To complete the picture, we note that the following existence result is implicitly contained in \cite{AA}. The class of the trivial bundle of rank $k$ is denoted by $[k].$
   Fix an exhaustion of $B\G$ by an increasing sequence of finite connected subcomplexes $(Y_n)_n.$
   Kasparov \cite{Kasparov-conspectus}, uses the notation $LK^*(B\G)=\varprojlim K^*(Y_n).$
   \begin{theorem}  Let $\G$ be a torsion free residually finite countable amenable group.
        For any $(z_n)_n\in \varprojlim \widetilde{K}^0(Y_n) $, there exist an asymptotic representation $\{\rho_n:\G \to U(k_n)\}_n$ and finite dimensional representations $\{\pi_n:\G \to U(k_n)\}_n$, such that $[E_{\rho_n}]-[k_n]=z_n$ for all $n\geq 1$.
  Moreover,
       by Theorem~\ref{thm:unique}, $\{\rho_n\}_n$ is unique up to stable approximate unitary equivalence. Thus, if $\{\rho'_n:\G \to U(k_n)\}_n$ is another lifting of $(z_n)_n$, then there exist a sequence of representations $\{\pi_n:\G \to U(\ell_n)\}_n$ and unitaries $u_n\in U(k_n+\ell_n)$ such that 
       \[\lim_{n\to \infty} \|\rho_n(s)\oplus \pi_n(s)-u_n\left( \rho'_n(s)\oplus \pi_n(s)\right)u_n^*\|=0,\quad \forall s\in \G.\]  
    \end{theorem}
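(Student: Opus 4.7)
The plan is to decouple the two assertions: existence of the asymptotic representation $\{\rho_n\}_n$ realizing a prescribed class $(z_n)_n$, and uniqueness up to stable approximate unitary equivalence. The existence is obtained by extracting what is implicitly constructed in \cite{AA}, while the uniqueness follows directly from Theorem~\ref{thm:unique} by a diagonal argument.

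For existence, I would proceed as follows. Fix increasing finite sets $F_n \uparrow \G$ and tolerances $\delta_n \downarrow 0$, and choose parameters $(S_n, \delta_n)$ compatibly with the exhaustion $Y_n$ so that any $(S_n,\delta_n)$-representation $\rho:\G\to U(k)$ yields a well-defined rank-$k$ bundle $E^{Y_n}_\rho$ (Theorem~\ref{thm:c11}). Because $\G$ is torsion-free, residually finite, and amenable, the reduced group $C^*$-algebra $C^*_r(\G)=C^*(\G)$ is quasidiagonal and admits a Dirac--dual-Dirac $\gamma$-element equal to $1$. Under these conditions the results of \cite{AA} (see also \cite{Kubota1}, \cite{CCC}) show that every class in $K^0(Y_n)$ is realized as $[E^{Y_n}_{\sigma_n}] - [m_n]$ for some sufficiently multiplicative $(S_n,\delta_n)$-representation $\sigma_n:\G\to U(m_n)$. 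Given $z_n\in \widetilde{K}^0(Y_n)$, lift it to such a $\sigma_n$. Residual finiteness of $\G$ supplies a descending chain of finite-index normal subgroups whose regular representations of the quotients produce genuine finite-dimensional representations $\tau_n:\G\to U(\ell_n)$ of any prescribed rank (padded by trivial summands if needed). Setting $\rho_n := \sigma_n \oplus \tau_n$ and $\pi_n := 1_{m_n}\oplus \tau_n$, both landing in $U(k_n)$ with $k_n = m_n + \ell_n$, one obtains $[E_{\rho_n}]-[k_n] = [E^{Y_n}_{\sigma_n}]-[m_n] = z_n$, and $\rho_n$ is asymptotically multiplicative by construction since $S_n \uparrow \G$ and $\delta_n\downarrow 0$.

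The uniqueness claim follows from Theorem~\ref{thm:unique} via a standard diagonal argument. Fix exhaustions $F_k \uparrow \G$ and $\varepsilon_k\downarrow 0$. For each $k$, Theorem~\ref{thm:unique} supplies parameters $(S^{(k)},\delta^{(k)}, Y^{(k)})$, and choosing $n_k$ large enough so that $\rho_{n_k},\rho'_{n_k}$ are both $(S^{(k)},\delta^{(k)})$-representations with $Y^{(k)} \subset Y_{n_k}$, the K-theoretic hypothesis $[E_{\rho_{n_k}}]-[k_{n_k}] = [E_{\rho'_{n_k}}]-[k_{n_k}] = z_{n_k}$ restricts to $[E^{Y^{(k)}}_{\rho_{n_k}}] = [E^{Y^{(k)}}_{\rho'_{n_k}}]$, producing a representation $\pi_{n_k}:\G \to U(\ell_{n_k})$ and a unitary $u_{n_k}$ satisfying \eqref{eq:ssu} on $F_k$ within $\varepsilon_k$. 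Defining $\pi_n$ and $u_n$ for intermediate indices by arbitrary choices (e.g.\ the zero-dimensional representation) yields the required sequence.

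The main obstacle is ensuring that the construction of $\sigma_n$ from $z_n$ can be carried out with the multiplicativity defect of $\sigma_n$ shrinking as $n\to\infty$, while simultaneously controlling $E^{Y_n}_{\sigma_n}$ on the growing subcomplexes $Y_n$. This is precisely the content of the quasidiagonal $K$-homology construction in \cite{AA}: quasidiagonality of $C^*(\G)$, guaranteed by amenability and residual finiteness (indeed even residual finiteness alone), provides unital asymptotic embeddings $C^*(\G)\to M_{k_n}(\C)$ whose behavior on $K^0(Y_n)$ is controlled by the $\gamma$-element, and the rank-matching with a genuine representation $\pi_n$ is arranged by absorbing a regular-representation summand, which exists in abundance by residual finiteness.
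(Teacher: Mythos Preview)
Your outline and the paper's proof follow the same path at the top level---both invoke \cite{AA} for existence and a diagonal application of Theorem~\ref{thm:unique} for uniqueness---but the paper is more explicit about the existence construction, and your version has one genuine gap.

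The paper does not lift each $z_n$ separately. It uses Proposition~\ref{prop=amena} to lift the \emph{entire} coherent sequence $(z_n)_n$ to a single class $\alpha\in KK(C^*(\G),\C)$, represents $\alpha$ by a Cuntz pair $[\varphi,\psi]$ with $\psi$ chosen (via residual finiteness) to be a direct sum of finite-dimensional representations, and then compresses by an approximate unit of projections commuting with $\psi$ to obtain $\varphi_n,\psi_n$ with $[E_{\varphi_n}^{Y_n}]-[E_{\psi_n}^{Y_n}]=z_n$. Because a single Cuntz pair is compressed, the $\varphi_n$ automatically form an asymptotic representation; your independent lifts would require re-running the whole \cite{AA} machine at each stage. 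Also, your claim that ``every class in $K^0(Y_n)$ is realized'' is too strong---only those in the image of $RK^0(B\G)\to K^0(Y_n)$ are---though this is harmless since the given $z_n$ lie in that image by coherence.

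The real gap is your equation $[E_{\rho_n}]-[k_n]=[E_{\sigma_n}^{Y_n}]-[m_n]$, which silently assumes $[E_{\tau_n}^{Y_n}]=[\ell_n]$. A genuine representation $\tau_n$ gives a \emph{flat} bundle $E_{\tau_n}^{Y_n}$, whose rational Chern classes vanish, so $[E_{\tau_n}^{Y_n}]-[\ell_n]$ is torsion in $\widetilde K^0(Y_n)$---but it need not be zero. The paper handles exactly this point: since $Y_n$ is a finite complex, there is $r_n\geq 1$ with $(E_{\psi_n}^{Y_n})^{\oplus r_n}$ trivial, and one sets $\rho_n=\varphi_n\oplus\psi_n^{\oplus(r_n-1)}$. (The $\pi_n$ appearing in the theorem statement plays no role in the equation $[E_{\rho_n}]-[k_n]=z_n$; the body's Theorem~\ref{exi} omits it. Your attempt to manufacture $\pi_n$ via $\tau_n$ is what introduced the error.)

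Your uniqueness argument is correct in spirit; just phrase it so that for \emph{every} sufficiently large $n$ (not only along a subsequence $n_k$) you choose the largest $k$ with $\rho_n,\rho'_n$ both $(S^{(k)},\delta^{(k)})$-representations and $Y^{(k)}\subset Y_n$, then apply Theorem~\ref{thm:unique} with parameters $(F_k,\varepsilon_k)$.
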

    If $\Gamma$ is not residually finite, one can still lift $(z_n)_n$ to a pair of asymptotic homomorphisms $\{\rho_n, \pi_n : \Gamma \to U(k_n)\}_n$ such that $[E_{\rho_n}] - [E_{\pi_n}] = z_n$ for all $n \geq 1$.
  \begin{corollary}\label{thm:ex&unique} Let $\G$ be a residually finite amenable group such that $B\G$ admits a finite CW complex model. 
  
  (1) For any $z\in \widetilde{K}^0(B\G)$ there are $\delta_0>0$ and  a finite set $S_0\subset \G$ such that for any  finite set $S_0\subset S \subset \G$ and any $0<\delta<\delta_0$ there is an $(S,\delta)$-representation $\rho:\G \to U(n)$  such that $[E_{\rho}]-[n]=z.$ 
   In particular, for any $x\in \widetilde{H}^{\text{even}}(B\G,\Q)$ there is an $(S,\delta)$-representation $\rho:\G \to U(n)$ such that $\widetilde{\ch}(E_\rho)=qx$ for some $q\in \Q$.
  
  (2) For any finite set $F\subset \G$ and any $\ep>0,$ there is a finite set $S\subset \G$ and $\delta>0$
such that for any two $(S,\delta)$-representations
$\rho,\rho':\G \to U(n)$ with $[E_\rho]=[E_{\rho'}]$ in $K^0(B\G),$ there is a representation $\pi:\G \to U(m)$ and a unitary $u\in U(n+m)$ satisfying \eqref{eq:ssu}.
	\end{corollary}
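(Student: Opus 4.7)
The plan is to deduce both parts of the corollary from Theorem~\ref{thm:unique} and from the existence theorem stated immediately before it, using that the compactness of $B\G$ eliminates the dependence on a finite subcomplex $Y$ in those statements. A preliminary observation makes both theorems applicable: a group of finite cohomological dimension must be torsion-free, since cohomological dimension is monotone under passing to subgroups and any nontrivial finite cyclic subgroup has infinite cohomological dimension. Thus the hypothesis that $B\G$ admits a finite CW complex model forces $\G$ to be torsion-free, and the torsion-free assumption present in Theorem~\ref{thm:unique} and in the existence theorem is automatically satisfied.

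For part~(1), I would fix an exhaustion $(Y_n)_n$ of $B\G$ by finite subcomplexes; since $B\G$ is itself finite, one may take $Y_n=B\G$ for all $n$ sufficiently large, so that the canonical map $\widetilde{K}^0(B\G)\to \varprojlim_n\widetilde{K}^0(Y_n)$ is an isomorphism. Any $z\in\widetilde{K}^0(B\G)$ then lifts to an eventually constant element of the inverse limit, and the existence theorem produces an asymptotic representation $\{\rho_n\}_n$ with $[E_{\rho_n}]-[k_n]=z$ for every $n$. Theorem~\ref{thm:c11} applied with $Y=B\G$ furnishes a finite set $S_0\subset \G$ and $\delta_0>0$ such that $E_\rho$ is defined as a bundle on all of $B\G$ whenever $\rho$ is an $(S,\delta)$-representation with $S\supseteq S_0$ and $\delta<\delta_0$. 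For any prescribed such $S$ and $\delta$, I would choose $n$ large enough that $\rho_n$ is $(S,\delta)$-multiplicative, and set $\rho=\rho_n$. For the Chern character statement I would apply this existence result to a $z\in\widetilde{K}^0(B\G)$ with $\widetilde{\ch}(z)=qx$ for some $q\in\Q$; such a $z$ exists because on a finite CW complex the rational Chern character $\widetilde{\ch}\otimes\Q:\widetilde{K}^0(B\G)\otimes\Q\to \widetilde{H}^{\mathrm{even}}(B\G,\Q)$ is an isomorphism.

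Part~(2) is a direct specialization of Theorem~\ref{thm:unique} with $Y=B\G$, since $B\G$ is itself compact; the conclusion of that theorem then delivers $S$, $\delta$, the representation $\pi$, and the unitary $u$ with no modification.

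There is no genuine obstacle here, as the substance has already been discharged by Theorem~\ref{thm:unique} and the existence theorem. The only point requiring care is the quantifier structure in part~(1): the K-theory class $z$ must be fixed first while $(S,\delta)$ varies, so that for each allowable $(S,\delta)$ one can select a sufficiently multiplicative term of the asymptotic representation associated to $z$. The finiteness of $B\G$ is precisely what provides the uniform parameters $(S_0,\delta_0)$ via Theorem~\ref{thm:c11} that make this uniformization legitimate.
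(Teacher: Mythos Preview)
Your proposal is correct and matches the paper's intended argument. The paper does not provide a separate proof of this corollary: part~(1) is restated later as Corollary~\ref{thm:existence} without proof, and part~(2) is understood as an immediate specialization of Theorem~\ref{uniqlo}. Your treatment is in fact more explicit than the paper's, particularly in supplying the torsion-free observation (finite $B\G$ forces finite cohomological dimension, hence no torsion), which the paper leaves implicit when invoking the preceding theorems. One minor refinement: in part~(2), Theorem~\ref{thm:unique} outputs some compact $Y\subset B\G$ rather than letting you choose $Y=B\G$; the correct reading is that the hypothesis $[E_\rho]=[E_{\rho'}]$ in $K^0(B\G)$ restricts to $[E_\rho^Y]=[E_{\rho'}^Y]$ in $K^0(Y)$, after which the theorem applies directly.
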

   
  Using Theorem~\ref{thm:c11} and Corollary~\ref{thm:ex&unique}, we can now address the question of perturbing multiples of approximate representations to direct sums of projective representations for poly-$\Z$ groups. For a map $\rho:\G\rightarrow U(n)$ and $r\in\N$, we denote $\rho^{\oplus r}$ by $r\rho$.
\begin{theorem}\label{thm:proja}
Let $\G$ be a  poly-$\Z$ group. Consider the following properties:						\begin{enumerate}
				\item For every $\varepsilon>0$ and finite subset $F\subset\G$, there exist a finite subset $S\subset\G$ and $\delta>0$ so that for any $(S,\delta)$-representation $\rho$, there are an integer $r>0$,  representations  $\pi_1$ and $\pi_2$, a unitary $u$, and  a projective representation $\psi$, so that
				$$||u(r \rho(s)\oplus\pi_1(s))u^*-\psi(s)\oplus\pi_2||<\varepsilon,\quad\forall s\in F.$$
				\item For every $\varepsilon>0$ and finite subset $F\subset\G$, there exist a finite subset $S\subset\G$ and a $\delta>0$ so that for any $(S,\delta)$-representation $\rho$, there are an integer $r>0$, a unitary $u$,  a representation $\pi,$ and a finite family of projective representations $(\psi_i)$,  so that
				$$\left\|u(r \rho(s)\oplus\pi(s))u^*-\bigoplus_i\psi_i(s)\right\|<\varepsilon\quad\forall s\in F$$
				\item For every $\varepsilon>0$ and finite subset $F\subset\G$, there exist a finite subset $S\subset\G$ and a $\delta>0$ so that for any $(S,\delta)$-representation $\rho$, there are an integer $r>0$, a unitary $u$, and finite families of projective representations $(\varphi_i)$ and $(\psi_k)$ so that
				$$\left\|u\left(r\rho(s)\oplus\bigoplus_i\varphi_i(s)\right)u^*-\bigoplus_k\psi_k(s)\right\|<\varepsilon\quad\forall s\in F$$
			\end{enumerate}
			Then condition (1) is true if and only if $\widetilde{H}^{\text{even}}(\G;\Q)=H^2(\G;\Q)$. Condition (2) is true if and only if $\widetilde{H}^{\text{even}}(\G;\Q)$ is spanned by nonnegative linear combinations of elements of the form $e^x-1$ with $x\in H^2(\G;\Q)$. Condition (3) is true if and only if $\widetilde{H}^{\text{even}}(\G;\Q)$ is spanned by linear combinations of elements of the form $e^x$ with $x\in H^2(\G;\Q)$.
		\end{theorem}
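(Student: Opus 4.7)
The plan is to translate conditions (1)--(3) into equivalent statements in $\widetilde H^{\text{even}}(\G;\Q)$ via the rational Chern character, and then to match each with its cohomological criterion. A poly-$\Z$ group $\G$ is torsion-free, residually finite, amenable, and has $B\G$ a closed aspherical manifold, so Corollary~\ref{thm:ex&unique} and Theorem~\ref{thm:c11}(2) apply. Through the Chern character isomorphism $\widetilde K^0(B\G)\otimes\Q\cong\widetilde H^{\text{even}}(\G;\Q)$, Corollary~\ref{thm:ex&unique}(1) says almost representations realize, up to positive integer multiples, every element of $\widetilde H^{\text{even}}(\G;\Q)$ as $\widetilde\ch(E_\rho)$; Corollary~\ref{thm:ex&unique}(2) says stable approximate unitary equivalence modulo a genuine representation is detected by equality in $K^0(B\G)$, hence rationally by equality of reduced Chern characters.

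Next I would compute the relevant Chern characters. A genuine representation $\pi$ yields a flat bundle, so by Milnor $\widetilde\ch(E_\pi)=0$ in $\widetilde H^{\text{even}}(\G;\Q)$; a projective representation $\psi$ of dimension $n$ satisfies $\widetilde\ch(E_\psi)=n(e^x-1)$ with $x=c_1(E_\psi)/n\in H^2(\G;\Q)$ by Theorem~\ref{thm:c11}(2). Applying $\widetilde\ch$ to the approximate identities in (1)--(3), and absorbing any additional genuine representation produced by stable uniqueness into $\pi_1$, $\pi$, or into the projective family (a genuine representation is projective with trivial cocycle), the three conditions become: $(1')$ every $z\in\widetilde H^{\text{even}}(\G;\Q)$ is a positive rational multiple of some $e^x-1$; $(2')$ every such $z$ is a nonnegative $\Q$-linear combination of elements $e^x-1$; and $(3')$ every such $z$ is a $\Q$-linear combination of $e^x-1$, equivalently of $e^x$ modulo $1\in H^0$.

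For the "only if" directions, given $z\in\widetilde H^{\text{even}}(\G;\Q)$ I would choose $\rho$ with $\widetilde\ch(E_\rho)=mz$ for some positive integer $m$ via Corollary~\ref{thm:ex&unique}(1), then read off the decomposition of $z$ from the given condition. For the "if" directions I need the key realizability input: for every $x\in H^2(\G;\Q)$ there exists a projective representation $\psi:\G\to U(n)$ with $c_1(E_\psi)/n=x$. For poly-$\Z$ groups this can be established by induction on the Hirsch length, since each integral class $y\in H^2(\G;\Z)$ classifies a central extension $1\to\Z\to\widetilde\G\to\G\to 1$ from which one constructs finite-dimensional projective representations realizing integer multiples of $y$ by inducing from unitary characters of the central $\Z$. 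Given the cohomological hypothesis on $z=\widetilde\ch(E_\rho)$, I would pick projective representations $\psi,\varphi_i,\psi_k$ realizing each summand (scaling dimensions so integer multiplicities match), and invoke Corollary~\ref{thm:ex&unique}(2) to produce the required approximate unitary equivalence.

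Finally I would verify the three cohomological equivalences. For (1): if $\widetilde H^{\text{even}}(\G;\Q)=H^2(\G;\Q)$ then $H^{2k}(\G;\Q)=0$ for $k\geq 2$, so $x^k=0$ for $x\in H^2$ and $e^x-1=x$; every $z\in H^2$ is then $1\cdot(e^z-1)$. Conversely, a nonzero class purely in $H^{2k}$ for some $k\geq 2$ cannot be $q(e^x-1)$, because its $H^2$-component $qx$ must vanish, forcing $x=0$ and hence $z=0$. Conditions (2) and (3) translate verbatim. The main obstacle is the realizability step in the "if" directions: constructing finite-dimensional projective representations of an arbitrary poly-$\Z$ group whose associated vector bundles have prescribed rational first Chern classes.
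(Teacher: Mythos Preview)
Your overall strategy matches the paper's proof (stated there as Theorem~\ref{thm:projb} for the slightly broader class of virtually polycyclic groups with finite $B\G$): translate each condition through $\widetilde\ch$ using Theorem~\ref{thm:c11}(2), Corollary~\ref{thm:ex&unique}(1) for the ``only if'' direction, and Corollary~\ref{thm:ex&unique}(2) for the ``if'' direction; your absorption of the auxiliary genuine representation from stable uniqueness into the projective family is exactly the paper's move.

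Two points where your sketch is thinner than the paper's argument and would need filling in. First, the realizability step: your phrase ``inducing from unitary characters of the central $\Z$'' does not give finite-dimensional projective representations, and ``induction on Hirsch length'' is not the right mechanism. The paper's Proposition~\ref{prop-proj} instead forms the central extension $\G_\sigma$ associated to an integral cocycle representing $x$, quotients by the subgroup $\langle\iota(1)^n\rangle$, uses Hirsch's theorem that this (virtually polycyclic) quotient is residually finite to pass to a \emph{finite} quotient $G_n$ in which the image of $\iota(1)$ has exact order $n$, and then induces a character of the central $\Z/n$ up to $G_n$. This yields a genuine asymptotic family of finite-dimensional projective representations $\psi_n$ with $c_1(E_{\psi_n})=\frac{m_n}{n}x$, which for large $n$ are $(S,\delta)$-multiplicative for the $S,\delta$ already fixed by Corollary~\ref{thm:ex&unique}(2). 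Second, Corollary~\ref{thm:ex&unique}(2) requires equality in $K^0(B\G)$, not merely of rational Chern characters; the paper bridges this by noting that equal rational Chern characters force the $K$-theory difference to be torsion plus a virtual trivial bundle, so a further integer multiple (absorbed into $r$) and an adjustment of the genuine-representation summand yield genuine equality in $K^0$. Your ``scaling dimensions so integer multiplicities match'' gestures at this but does not quite say it.
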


Let $\Gamma$ be a finitely generated group with a non-torsion 2-cohomology class $[\omega]$ that corresponds to a central extension where the middle group is residually finite. 
Motivated by work of {Kazhdan} \cite{Kazhdan-epsilon}, Voiculescu \cite{Voi:unitaries} and Eilers, Shulman and S{\o}rensen \cite{ESS-published}, the second author provided in \cite{arXiv:2204.10354} an explicit formula, in terms of the 2-cocycle $\omega$, for projective representations of $\G$ that are almost representations  and which  are not perturbable to genuine representations.

In the last part of the paper we explore the question of constructing concrete almost representations which realize higher dimensional cohomological invariants.
More precisely, we consider tensor products and direct sums of  projective representations
  to construct almost representations 
   $\rho$ for which the associated almost flat bundle $E_\rho$ is stably isomorphic to a given bundle $E$ over $B\G$ and which in particular will correspond to higher dimensional cohomology classes in $H^{2k}(\G,\Q)$.
  Due to limitations of our approach, we have satisfactory results only for certain classes of groups where the ring $K^0(B\G)$ is generated by line bundles.
  However, Theorem~\ref{thm:unique} shows that almost representations constructed out of projective representations are as general as any other almost representation,  in many cases when the ring $K^0(B\G)$ is generated by line bundles. We have satisfactory results only for certain classes of groups where the ring $K^0(B\G)$ is generated by line bundles such as $\G=\Z^d$, $\G=\Z\times \HH_3 $
    and  $\G=\HH_3\times \HH_3 .$

 For each given element $\alpha$ of  $K^0(B\Z^d)\cong \bigwedge^{even} \Z^d  $ of virtual rank $0$, we construct a concrete almost representation $\rho:\Z^d \to U(n)$
  such that $[E_\rho]=n+\alpha$. Furthermore, we exhibit explicit almost representations 
 $\rho : \G=\HH_3\times \HH_3 \to U(n)$ such that $c_1(E_\rho)=c_2(E_\rho) =0$ while
$c_3(E_\rho)\neq 0$. In this case, the obstruction to perturbing $\rho$ into a true representation lies solely in an element of $H^{6}(\Gamma, \mathbb{Z})$.

\section{Local $2$-cocycles associated to almost representations}\label{sec:2}

We will only use homology and cohomology with coefficients in commutative rings $Q$ such as $\Z$, $\Q$ and $\R$, viewed as trivial $\Gamma$-modules. The reader is referred to \cite[Chapter II.3]{iBrown:book-cohomology} for more background information.
Let $C_k(\Gamma;Q)$ consist of formal linear combinations of elements of~$\Gamma^k$ with coefficients in $Q$. We write a typical element of $C_2(\Gamma;Q)$ as
$$\sum_{j=1}^m k_j[a_j|b_j]$$
with $a_j,b_j\in\Gamma$ and $k_j\in Q$. There are boundary maps $\partial_2:C_2(\Gamma;Q)\rightarrow C_1(\Gamma;Q)$ defined by
$$\partial_2[a|b]=[a]-[ab]+[b]$$
and $\partial_3:C_3(\Gamma;Q)\rightarrow C_2(\Gamma;Q)$ defined by
$$\partial_3[a|b|c]=[b|c]-[ab|c]+[a|bc]-[a|b].$$
Then $H_2(\Gamma;Q):=\ker(\partial_2)/\im(\partial_3)$. An element of $Z_2(\G,Q):=\ker(\partial_2)$ is referred to as a {2-cycle} and an element in $\im(\partial_3)$ is referred to as a {2-boundary}.

Let us recall now the definition of $2$-cohomology $H^2(\G,Q).$
A 2-cocycle $\sigma:\G^2 \to Q$ 
is a function that satisfies the equation
\begin{equation}\label{eq:cocyle}
    \sigma(a,b)+\sigma(ab,c)=\sigma(a,bc)+\sigma(b,c),\quad \text{for all}\quad  a,b,c \in \G.
\end{equation}

We indicate that $\sigma$ satisfies this condition by writing $\sigma\in Z^2(\G,Q).$ A {2-coboundary} is a 2-cocycle  that can be written in the form
$$\sigma(a,b)=\partial\, \gamma (a,b)=\gamma(a)-\gamma(ab)+\gamma(b)$$ for some function $\gamma:\G \to Q$. $H^2(\Gamma;Q)$ is defined to be the group of 2-cocycles, mod the subgroup of 2-coboundaries. The group operation is pointwise addition.
One can normalize a 2-cocycle $\sigma$ by adding to it a boundary element. Specifically, by replacing $\sigma$ by $\sigma+\partial \,\gamma,$ where $\gamma:\G \to Q$ is defined by $\gamma(a)=-\sigma(e,a)$ for $a\in \G,$ we obtain a 2-cocycle satisfying 
\[\sigma(a,e)=\sigma(e,a)=0.\]

The {Kronecker pairing} between a 2-homology class $c$ and a 2-cohomology class $x$ is a bilinear map $H^2(\Gamma;Q)\times H_2(\Gamma;\Z)\to Q$ defined by the formula
$$\langle x,c\rangle=\left\langle\sigma,\sum_{j=1}^m k_j[a_j|b_j]\right\rangle=\sum_{j=1}^m k_j\sigma(a_j,b_j)$$
where $\sigma$ is a 2-cocycle representing $x$, and $\sum_{j=1}^m k_j[a_j|b_j]$ is a 2-cycle representing $c$. The value does not depend on the choice of representatives.

For a discrete group $\G$, we will use the construction of its classifying space $B\G$  based on the notion of $\Delta$-complex. $\Delta$-complexes are defined in \cite[Ch.2\S 2.1]{hatcher} and the construction of $B\G$ if given in
\cite[Example 1B.7]{hatcher}. An $n$-cell of $B\G$ which is the image of the simplex $[1,a_1,a_1a_2,...,a_1\cdots a_n]$ is labeled by $[a_1|a_2|...|a_n].$ The chain complex which defines the simplicial homology of the $\Delta$-complex of $B\G,$
denoted by $C^{\Delta}_*(B\G;Q)$, coincides with the non-homogeneous bar complex that defines the homology of $\G$ as discussed in \cite[p.36]{iBrown:book-cohomology} via the bijection which maps the element $[a_1|a_2|...|a_n]$ of the basis of $C^{\Delta}_n(B\G;Q)$ to the element $[a_1|a_2|...|a_n]$ of $C_n(\G;Q).$
This allows us to identify $H_n^\Delta(B\G;Q)$ with $H_n(\G;Q)$ and 
$H^n_\Delta(B\G;Q)$ with $H^n(\G;Q).$ 
Furthermore, the simplicial homology (cohomology) is isomorphic to the singular homology (cohomology). At the level of chains this isomorphism is induced by the map which takes an $n$-cell to its characteristic map $\Delta^n \to B\G$. We will make no notational distinction between the simplicial homology (cohomology) of a subcomplex $Y$ of $B\G$ and its singular homology (cohomology). 

 Let $\Gamma = \{a_1, a_2, \dots \}$ be a fixed enumeration of  $\Gamma$, and let $S_n = \{a_1, \dots, a_n \}.$ 
For each $n \ge 1$,  define $Y_n$ as the smallest CW subcomplex of $B\Gamma$ that contains all cells of the form $[b_1| b_2|\dots| b_k]$, where $b_i \in S_n$, $1 \leq k \leq n.$  It is then clear that $Y_n$ is a finite subcomplex of $B\G$ of dimension $n$.
Moreover, $Y_n \subset Y_{n+1}$ and 	$\bigcup_{n=1}^{\infty} Y_n = B\G$. 
 
$B\G$ is endowed with the direct limit topology (the weak topology). In particular any compact subspace of $B\G$ is contained in some $Y_n$~\cite[Proposition A.1]{hatcher}. In this model $B\G$ is not locally finite, hence not locally compact in general. For  a locally finite model one replaces $Y_n$ by $Y_{n}\times[0,1]$ and employs the usual telescope construction where $Y_n\times \{1\}$ is identified with the corresponding subspace of $Y_{n+1}\times \{0\}$.
It is this latter model that we will use in Section~\ref{sec:classifi} in order to appeal to Kasparov's theory $RKK(B\G;A,B).$

 Let $Y$ be a finite $\Delta$-subcomplex of $B\Gamma$ and let $Q$ be an abelian group. We have a natural map of chain complexes $C^{\Delta}_*(Y;Q)\to C^{\Delta}_*(B\G;Q).$

\begin{lemma}\label{2cy-cocy}
Denote by $Z_n(\G; Q)_Y$ the n-cycles in $C^{\Delta}_n(Y;Q)$.

  (a) Every simplicial cycle $c \in Z_2(\G; Q)_Y$ can be directly viewed as a cycle $c\in Z_2(\Gamma; Q)$.
  
  (b) Any map $\omega:\{(a,b) \in \Gamma \times \Gamma \mid [a|b] \in Y_{[2]}\}\to Q,$
 that satisfies the 2-cocycle equation \eqref{eq:cocyle} for all 3-simplices $[a| b| c] \in Y_{[3]},$ defines a 2-cocycle in $Z_\Delta^2(Y;Q)$ and hence an element $[\omega]\in H^2(Y,Q)$. We call $\omega$ a local 2-cocycle and the set of all such maps is denoted by $Z^2(\G;Q)_Y.$
 
 (c) For any $c\in Z_2(\G,\Z),$ there is a finite $\Delta$-subcomplex $Y$ of $B\G$ that supports $c$. If $\omega\in Z^2(\G;Q)_Y$ is as in (b), then we have a pairing $\langle \omega, c\rangle$ given by the Kronecker pairing $H^2(Y,Q)\times H_2(Y,\Z)\to Q$.
\end{lemma}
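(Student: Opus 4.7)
The plan is to exploit systematically the identification $C^\Delta_*(B\G;Q) = C_*(\G;Q)$ recorded in the preceding discussion, reducing each part of the lemma to a transparent check on basis cells. None of the three parts is substantive mathematically; the work is entirely notational, making sure that the inclusion of a finite $\Delta$-subcomplex $Y\hookrightarrow B\G$ interacts correctly with the bar complex.

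For part (a), because $Y$ is a subcomplex of the $\Delta$-complex $B\G$, the inclusion induces an injective chain map $C^\Delta_*(Y;Q)\hookrightarrow C^\Delta_*(B\G;Q)$. Composing with the identification $C^\Delta_n(B\G;Q) = C_n(\G;Q)$, which sends the basis cell $[a_1|\dots|a_n]$ to the homonymous bar chain, gives an injection into the bar complex that is compatible with the boundary operators. Thus any $c\in Z_2(\G;Q)_Y$ maps to a cycle in $Z_2(\G;Q)$, and injectivity legitimizes suppressing the notational distinction.

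For part (b), I would extend $\omega$ by $Q$-linearity to a 2-cochain $\widetilde{\omega}\in C^2_\Delta(Y;Q)$. Its coboundary is determined by its values on basis 3-cells, and on $[a|b|c]\in Y_{[3]}$ the formula for $\partial_3$ gives
\[
(\partial\widetilde{\omega})[a|b|c] = \omega(b,c) - \omega(ab,c) + \omega(a,bc) - \omega(a,b),
\]
which vanishes precisely by the assumed 2-cocycle relation \eqref{eq:cocyle} on $Y_{[3]}$. Hence $\widetilde{\omega}\in Z^2_\Delta(Y;Q)$ and yields a class $[\omega]\in H^2(Y,Q)$.

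For part (c), given $c = \sum_{j=1}^m k_j[a_j|b_j]\in Z_2(\G;\Z)$, I would take $Y$ to be the smallest $\Delta$-subcomplex of $B\G$ containing the finitely many 2-cells $[a_j|b_j]$ together with their faces and vertices. This is a finite subcomplex by construction. Part (a) then lifts $c$ to a cycle in $Z_2(\G;\Z)_Y$, and part (b) provides the cohomology class $[\omega]\in H^2(Y,Q)$, so the pairing $\langle\omega,c\rangle := \langle[\omega],[c]\rangle$ is the standard Kronecker pairing on $Y$. The only mild point worth flagging, really just bookkeeping, is that the value does not depend on the choice of supporting subcomplex: if $Y'\supset Y$ is another finite subcomplex supporting $c$, then naturality of the Kronecker pairing under the inclusion $Y\hookrightarrow Y'$ — together with the fact that a local 2-cocycle on $Y'$ restricts to one on $Y$ — yields the same value.
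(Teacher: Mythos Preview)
Your proposal is correct and follows essentially the same approach as the paper: both arguments use the inclusion $C^\Delta_*(Y;Q)\hookrightarrow C^\Delta_*(B\G;Q)\cong C_*(\G;Q)$ for part (a), verify the coboundary on basis 3-cells for part (b), and treat part (c) as an immediate consequence. Your part (c) is somewhat more explicit than the paper's (which simply says ``this follows from the previous discussion''), including the remark on independence of the supporting subcomplex, but the content is identical.
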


\begin{proof}
  (a) Since $Y \subseteq B\Gamma$ as $\Delta$-complexes, we have $C^{\Delta}_2(Y;Q) \subseteq C_2^{\Delta}(B\Gamma;Q).$
The boundary operator $\partial_2^Y$ is simply the restriction of $\partial_2^{B\Gamma}$, so cycles remain cycles under this inclusion: $Z_2(Y; Q) \subseteq Z_2(B\Gamma; Q)$.
 The isomorphism $C^{\Delta}_*(B\Gamma;Q) \cong C_*^{\mathrm{bar}}(\Gamma; Q)$ identifies the $\Delta$-complex of $B\Gamma$ with the bar resolution chain complex used in group homology, \cite[p.18]{iBrown:book-cohomology}.
  
  (b) The {cocycle condition} requires that for every 3-simplex $[a| b| c] \in Y_{[3]}$:
$$\omega(b, c) - \omega(ab, c) + \omega(a, bc) - \omega(a, b) = 0.$$
Since $Y$ is a subcomplex, whenever a 3-simplex belongs to $Y$, all its 2-dimensional faces are also in $Y$. This guarantees that $\omega$ is well-defined on all terms appearing in the cocycle equation, and the extension to a 2-cochain $\omega: C_2(Y;Q) \to Q$ via $\omega([a|b]) = \omega(a,b)$ automatically satisfies $\delta_Y^2(\omega) = 0$, so that $\omega \in Z^2(Y,Q)$.

(c) This follows from the previous discussion.
\end{proof}
\begin{lemma}
    If $B\G$ admits a compact model $X,$ then the standard CW model of $B\G$ has a finite subcomplex $Y$ that contains a compact space homotopy equivalent to $X,$ so that if $\iota:Y\rightarrow B\G$ is the inclusion map, then $\iota^*:H^k(B\G;Q)\rightarrow H^k(Y;Q)$ is injective, and $\iota_*:H_k(Y;Q)\rightarrow H_k(B\G;Q)$ is surjective.
\end{lemma}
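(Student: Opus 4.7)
The plan is to exploit that $X$ and the bar-complex model $B\G=\bigcup_n Y_n$ are both classifying spaces for $\G$ and are therefore homotopy equivalent, together with the weak-topology property of $B\G$ recalled earlier in the excerpt.

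First I would fix a homotopy equivalence $f\colon X\to B\G$ with homotopy inverse $g\colon B\G\to X$; such $f$ exists because any two $K(\G,1)$-spaces are homotopy equivalent, and cellular approximation allows me to take $f$ cellular. Since $X$ is compact, the subspace $f(X)\subset B\G$ is compact, and by the cited Hatcher Proposition A.1 it is contained in some $Y_N$. Setting $Y:=Y_N$, the map $f$ factors as $f=\iota\circ\tilde f$, where $\tilde f\colon X\to Y$ is the corestriction of $f$ and $\iota\colon Y\hookrightarrow B\G$ is the inclusion.

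The cohomology and homology claims then follow purely formally from this factorization. Applying $H^k(\,\cdot\,;Q)$ gives $f^{*}=\tilde f^{*}\circ\iota^{*}$; since $f$ is a homotopy equivalence, $f^{*}$ is an isomorphism, which forces $\iota^{*}$ to be injective. Dually, $f_{*}=\iota_{*}\circ\tilde f_{*}$ is an isomorphism on $H_k$, so $\iota_{*}$ is surjective. For the assertion that $Y$ contains a compact space homotopy equivalent to $X$, the natural candidate is the image $f(X)$; after arranging $f$ to be a cellular embedding — if necessary by enlarging $N$ and replacing $f$ by a mapping-cylinder-type construction realized inside a larger $Y_M\supset Y_N$ — the subspace $f(X)\subset Y$ with its subspace topology inherits the homotopy type of $X$.

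The main obstacle I anticipate is precisely the last point: a generic homotopy equivalence $f\colon X\to B\G$ need not be a topological embedding, so $f(X)$ with subspace topology need not a priori be homotopy equivalent to $X$. The resolution is a mapping-cylinder or cellular-embedding argument inside a slightly larger finite subcomplex, which is standard but requires care. The algebraic conclusions about $\iota^{*}$ and $\iota_{*}$, however, rely only on the bare factorization $f=\iota\circ\tilde f$ and are insensitive to whether $f$ is a topological embedding, so they can be extracted without confronting this subtlety.
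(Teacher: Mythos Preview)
Your approach is essentially the same as the paper's: pick a homotopy equivalence $f\colon X\to B\G$, take $Y$ to be a finite subcomplex containing the compact set $f(X)$, and deduce the injectivity/surjectivity claims from the factorization $f=\iota\circ\tilde f$ with $f^*$, $f_*$ isomorphisms. The paper's proof is just these two sentences and does not engage with the embedding subtlety you flag; the phrase ``contains a compact space homotopy equivalent to $X$'' is used loosely there, and only the factorization through $Y$ is actually used downstream.
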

\begin{proof}
    Let $f:X\rightarrow B\G$ be a homotopy equivalence then let Y be a finite subcomplex containing $f(X)$. Then $f^*$ and $f_*$ are isomorphisms that factor through the cohomology and homology of $Y$.
\end{proof}
The upshot is that if $B\G$ has any compact model, we can understand the cohomology of $\G$, by seeing what it does on the finite subset that is needed to build $Y$ as a CW complex. 

We will prove the following facts for matrices, but using the de la Harpe-Skandalis determinant, they may be generalized to any tracial Banach algebra; see~\cite{partA}. Many results in this section follow from results in~\cite{partA}, but we are providing alternate proofs that lead to explicit numerical bounds.

\begin{definition} Let $S$ be a finite subset of $\G$ and let $\delta>0$.
    A unital map $\rho:\G \to U(n)$ is called an $(S,\ep)$-representation if
  \[\|\rho(st)-\rho(s)\rho(t)\|<\delta, \,\, \forall s,t \in S.\]    \end{definition}

\begin{definition}\label{localCocycle}
If $\rho$ is an $(S,\delta)$-representation with $0<\delta<1$, then for $a,b\in S$ we define the local 2-cocyle associated to $\rho$ by the formula
\begin{equation}\label{def-local-cocycle}
	\omega(a,b)=\omega_\rho(a,b)=
	\frac{1}{2 \pi i}  \mathrm{Tr}\left(\log \left(\rho(a)\rho(b)\rho(ab)^{-1}\right)\right).
\end{equation}
where $\log$ is defined to be the usual power series centered at 1. The terminology is justified by Proposition ~\ref{prop:2cocycle} below.
\end{definition}

\begin{lemma}\cite[Lemma 3.2]{arXiv:2204.10354}\label{loghom}
If $u_1$, $u_2$ are unitary matrices each within $\frac12$ of the identity then $\Tr(\log(u_1u_2))=\Tr(\log(u_1))+\Tr(\log(u_2))$.
\end{lemma}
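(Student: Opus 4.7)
The plan is to derive the additivity identity from the determinant identity $\det(u_1u_2)=\det(u_1)\det(u_2)$ combined with $\det(e^A)=e^{\Tr A}$, and then remove the resulting $2\pi i\,\Z$-ambiguity by a one-parameter continuity argument.

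First I would verify that a natural homotopy stays inside the domain of convergence of the log series. Writing $\log u_i=iH_i$ with $H_i$ Hermitian (via continuous functional calculus), the hypothesis $\|u_i-1\|<\tfrac12$ forces every eigenvalue $\theta$ of $H_i$ to satisfy $|e^{i\theta}-1|=2|\sin(\theta/2)|<\tfrac12$, so $|\theta|<2\arcsin(1/4)<\pi/6$. Because $|\sin(\cdot/2)|$ is increasing on $[0,\pi/2]$, this yields $\|\exp(s\log u_i)-1\|\le\|u_i-1\|<\tfrac12$ for every $s\in[0,1]$. Hence the path $u(s):=\exp(s\log u_1)\exp(s\log u_2)$ satisfies $\|u(s)-1\|<1$ throughout $[0,1]$, so $\log u(s)$ is defined by the same power series and depends continuously on $s$.

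Next I would introduce
\[g(s):=\Tr\log u(s)-s\,\Tr(\log u_1)-s\,\Tr(\log u_2),\]
which is continuous on $[0,1]$ with $g(0)=0$. Applying $\det(e^A)=e^{\Tr A}$ and multiplicativity of $\det$ gives
\[\exp g(s)=\frac{\det u(s)}{e^{s\Tr(\log u_1)+s\Tr(\log u_2)}}=\frac{e^{s\Tr(\log u_1)}\cdot e^{s\Tr(\log u_2)}}{e^{s\Tr(\log u_1)+s\Tr(\log u_2)}}=1,\]
so $g$ takes values in $2\pi i\,\Z$. Since $[0,1]$ is connected, $g\equiv 0$, and evaluating at $s=1$ yields the lemma.

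The main subtlety is the need for a homotopy at all: the crude bound $|\Tr\log u|\le n\|\log u\|$ grows linearly in the matrix size $n$, so one cannot directly rule out a nonzero multiple of $2\pi i$ from the single equation at $s=1$. The spectral estimate $\|\exp(s\log u_i)-1\|\le\|u_i-1\|<\tfrac12$ is exactly what keeps $u(s)$ inside the open disk of radius $1$ about the identity, where $\log$ is single-valued; without this refinement one would only obtain the identity modulo $2\pi i\,\Z$.
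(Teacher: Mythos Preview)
Your proof is correct. The paper does not actually prove this lemma; it simply quotes it from \cite[Lemma~3.2]{arXiv:2204.10354}, so there is no in-paper argument to compare against. Your determinant-plus-homotopy approach---using $\det(e^A)=e^{\Tr A}$ and multiplicativity of $\det$ to force $g(s)\in 2\pi i\,\Z$, then invoking connectedness to pin down the integer---is a standard and clean route to trace-additivity results of this kind. The spectral estimate keeping $u(s)$ within the unit ball of the identity is exactly the right ingredient, and your observation that a direct size bound on $\Tr\log u$ fails because of the dimension factor $n$ correctly identifies why the homotopy is needed.
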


\begin{proposition}\label{prop:2cocycle}
Suppose that $\rho$ is an $(S,\delta)$-representation. If 
 $a,b,c,ab,bc\in S$ and $\delta\leqslant\frac12$, then $\omega$ satisfies the coycle equation at $a,b,c$:
$$\omega(a,b)+\omega(ab,c)=\omega(a,bc)+\omega(b,c).$$
\end{proposition}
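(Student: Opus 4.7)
The plan is to exploit the multiplicativity of $\Tr \circ \log$ on products of unitaries close to the identity (Lemma~\ref{loghom}) together with the elementary algebraic identity
\begin{equation*}
\rho(a)\rho(b)\rho(c)\rho(abc)^{-1} = \bigl(\rho(a)\rho(b)\rho(ab)^{-1}\bigr)\bigl(\rho(ab)\rho(c)\rho(abc)^{-1}\bigr) = \bigl[\rho(a)\,\rho(b)\rho(c)\rho(bc)^{-1}\,\rho(a)^{-1}\bigr]\bigl(\rho(a)\rho(bc)\rho(abc)^{-1}\bigr),
\end{equation*}
which is just two different ways of collapsing the telescoping product.

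First, I would introduce the shorthand $u(x,y):=\rho(x)\rho(y)\rho(xy)^{-1}$, defined whenever $x,y,xy\in S$. Since $\rho$ is unital and $\rho(xy)$ is unitary, we have $\|u(x,y)-1\|=\|\rho(x)\rho(y)-\rho(xy)\|<\delta\leq\tfrac{1}{2}$, so $u(x,y)$ is a unitary within $\tfrac12$ of the identity and $\log u(x,y)$ is given by the power series. Because $a,b,c,ab,bc\in S$ (and $abc$ may be arbitrary, since we only multiply by $\rho(abc)^{-1}$ and its inverse in pairs), all four unitaries $u(a,b)$, $u(ab,c)$, $u(b,c)$, $u(a,bc)$ are defined and within $\tfrac12$ of the identity. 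The conjugate $\rho(a)u(b,c)\rho(a)^{-1}$ is also within $\tfrac12$ of the identity since conjugation by a unitary preserves the operator norm.

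Next, I would rewrite the displayed algebraic identity as
\begin{equation*}
u(a,b)\,u(ab,c)\;=\;\bigl[\rho(a)\,u(b,c)\,\rho(a)^{-1}\bigr]\,u(a,bc),
\end{equation*}
apply $\Tr\circ\log$ to both sides, and invoke Lemma~\ref{loghom} on each side (since each factor is within $\tfrac12$ of $1$) to split the trace-logarithms:
\begin{equation*}
\Tr\log u(a,b) + \Tr\log u(ab,c) \;=\; \Tr\log\!\bigl(\rho(a)u(b,c)\rho(a)^{-1}\bigr) + \Tr\log u(a,bc).
\end{equation*}
Because $\log$ is defined by a power series, $\log(\rho(a)u(b,c)\rho(a)^{-1})=\rho(a)\log(u(b,c))\rho(a)^{-1}$, so the trace is conjugation-invariant and equal to $\Tr\log u(b,c)$. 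Dividing through by $2\pi i$ gives the desired cocycle identity.

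The only delicate point is making sure the hypothesis $\|u-1\|\leq \tfrac12$ of Lemma~\ref{loghom} is met for every unitary to which we apply the splitting; this is exactly why the statement restricts to $\delta\leq \tfrac12$ and requires all six products $a,b,c,ab,bc$ (and the implicit $abc$, which does not need to lie in $S$) to be controlled. No bound on $\|u(a,b)u(ab,c)-1\|$ is needed because Lemma~\ref{loghom} is applied to the individual factors, not to their product.
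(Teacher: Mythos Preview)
Your proof is correct and follows essentially the same approach as the paper's: both telescope $\rho(a)\rho(b)\rho(c)\rho(abc)^{-1}$ in two ways and apply Lemma~\ref{loghom} to split the trace-logarithms. The only cosmetic difference is that the paper handles the reordering via the cyclic identity $\Tr(\log(xy))=\Tr(\log(yx))$ (proved from $\Tr((xy)^n)=\Tr((yx)^n)$), whereas you use the equivalent conjugation invariance $\Tr\log(vuv^{-1})=\Tr\log(u)$; these are interchangeable and neither buys anything over the other.
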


\begin{proof}
Note that
$$\Tr((xy)^n)=\Tr(x(yx)^{n-1}y)=\Tr((yx)^{n-1}yx)=\Tr((yx)^n).$$
Since $\log$ is a power series, it follows that $\Tr(\log(xy))=\Tr(\log(yx))$. Using Lemma~\ref{loghom} and the observation above,
\begin{align*}
\omega(a,b)+\omega(ab,c)&=\frac{1}{2\pi i}(\Tr(\log(\rho(a)\rho(b)\rho(ab)^{-1}))+\Tr(\log(\rho(ab)\rho(c)\rho(abc)^{-1})))\\
&=\frac{1}{2\pi i}\Tr(\log(\rho(a)\rho(b)\rho(c)\rho(abc)^{-1}))\\
&=\frac{1}{2\pi i}\Tr(\log(\rho(b)\rho(c)\rho(abc)^{-1}\rho(a)))\\
&=\frac{1}{2\pi i}(\Tr(\log(\rho(b)\rho(c)\rho(bc)^{-1}))+\Tr(\log(\rho(bc)\rho(abc)^{-1}\rho(a))))\\
&=\frac{1}{2\pi i}(\Tr(\log(\rho(b)\rho(c)\rho(bc)^{-1}))+\Tr(\log(\rho(a)\rho(bc)\rho(abc)^{-1})))\\
&=\omega(a,bc)+\omega(b,c).
\end{align*}
    
\end{proof}

Thus for a given subcomplex $Y\subset B\G,$
$\omega_\rho$ defines a local 2-cocycle in $Z^2(\G,\R)_Y$, 
whenever $S$ is sufficiently large and $\ep$  is sufficiently small. We say that $\omega_\rho$ is the local cocycle associated to $\rho$. 

Denote by $\mathrm{Rep}^n_{(S,\delta)}(\G)$ the set of $(S,\delta)$-representations $\rho:\G \to U(n)$ and
  let $\mathrm{Rep}_{(S,\delta)}(\G)$ be the disjoint union $\bigsqcup_{n\geq 1}\mathrm{Rep}^n_{(S,\delta)}(\G).$
 \begin{proposition}\label{well-defined}
   For any finite subcomplex $Y\subset B\G$ there are a finite subset $S\subset \G$ and $\delta>0$
   such that the correspondence $\rho\mapsto \omega_\rho$ is a well-defined map $\mathrm{Rep}^n_{(S,\delta)}(\G)\to Z^2(\G,\R)_Y.$
 \end{proposition}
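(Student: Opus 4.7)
The plan is to extract $S$ and $\delta$ directly from the combinatorial data of the finite subcomplex $Y$, and then to quote Proposition~\ref{prop:2cocycle} for the cocycle identity together with Lemma~\ref{2cy-cocy}(b) for the passage from a map defined at the 2-simplices of $Y$ to a genuine element of $Z^2(\G,\R)_Y$. Since $Y$ is a finite $\Delta$-subcomplex of $B\G$, it contains only finitely many 2-simplices $[a|b]\in Y_{[2]}$ and 3-simplices $[a|b|c]\in Y_{[3]}$. I would therefore let $S\subset\G$ be the finite subset consisting of all group elements $a,b,ab$ with $[a|b]\in Y_{[2]}$, together with all $a,b,c,ab,bc$ with $[a|b|c]\in Y_{[3]}$, and set $\delta=\tfrac12$.

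The first step is to verify that for any $(S,\delta)$-representation $\rho$ and any 2-simplex $[a|b]\in Y_{[2]}$, the number $\omega_\rho(a,b)$ defined by~\eqref{def-local-cocycle} makes sense. The inclusion $a,b\in S$ gives
\[\|\rho(a)\rho(b)\rho(ab)^{-1}-1\|=\|\rho(a)\rho(b)-\rho(ab)\|<\tfrac12,\]
well inside the radius of convergence of the principal power series for $\log$. Hence $\log(\rho(a)\rho(b)\rho(ab)^{-1})$ is a skew-adjoint matrix and $\omega_\rho(a,b)$ is a real number depending only on $\rho$.

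The second step is to verify the local cocycle equation at every 3-simplex $[a|b|c]\in Y_{[3]}$. The construction of $S$ places $a,b,c,ab,bc$ in $S$, and $\delta\leqslant\tfrac12$, which is exactly the hypothesis of Proposition~\ref{prop:2cocycle}. Its conclusion is the identity
\[\omega_\rho(a,b)+\omega_\rho(ab,c)=\omega_\rho(a,bc)+\omega_\rho(b,c).\]
Holding at every 3-simplex of $Y$, this is precisely the hypothesis needed to apply Lemma~\ref{2cy-cocy}(b), which then promotes $\omega_\rho$ to an element of $Z^2(\G,\R)_Y$. Since no auxiliary choice enters the formula~\eqref{def-local-cocycle}, the assignment $\rho\mapsto\omega_\rho$ is well defined on the nose.

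The only real difficulty is the bookkeeping step: one must enlarge $S$ enough so that every 2-simplex of $Y$ sits in the domain of $\omega_\rho$ and every 3-simplex of $Y$ meets the hypotheses of Proposition~\ref{prop:2cocycle}. Once this accounting is arranged the proposition falls out immediately, and there is no substantive obstacle beyond it.
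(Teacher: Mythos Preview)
Your proof is correct and follows essentially the same approach as the paper, which simply cites Lemma~\ref{2cy-cocy}(b) and Proposition~\ref{prop:2cocycle}; you have merely made explicit the bookkeeping (the choice of $S$ and $\delta$) that the paper's one-line proof leaves to the reader. One small redundancy: since $Y$ is a subcomplex, the 2-faces $[ab|c]$, $[a|bc]$, $[a|b]$, $[b|c]$ of any 3-simplex $[a|b|c]\in Y_{[3]}$ already lie in $Y_{[2]}$, so your first clause (adding $a,b,ab$ for every 2-simplex) already supplies the elements $a,b,c,ab,bc$ needed for each 3-simplex, making the second clause unnecessary---but this is harmless.
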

 \begin{proof} This follows from Lemma~\ref{2cy-cocy}(b) and Proposition~\ref{prop:2cocycle}.
\end{proof}

    \section{Pairings in tracial C*-Algebras}\label{sec:pairings}

We confine our discussion of pairing to matrix-valued approximate representations.
 The general case of tracial algebras is discussed in \cite{EEE}.
The Exel-Loring \cite{Exel-Loring:inv} formula establishes the equality of two  invariants associated to a pair of almost commuting unitaries regarded as an almost representation of $\Z^2$.
A generalization of the Exel-Loring formula to almost representations of
arbitrary discrete groups $\G$ is given in \cite{DDD}.  The index formula, which we review in Theorem~\ref{thm:ddd} below, defines 
a pairing $\pmb{\langle} \rho, r\pmb{\rangle}$ between sufficiently multiplicative almost representations \(\rho: \Gamma \to U(k)\) and elements $[r]$ of the group homology  \(H_2(\Gamma, \mathbb{Z})\) realized through the Hopf formula, see equation~\eqref{111d} below. 
\[\{\text{Almost representations}\}\times H_2(\G,\Z)^{Hopf}\to \Z,\quad  (\rho,[r])\mapsto \pmb{\langle} \rho, r\pmb{\rangle}.\]

Another pairing $\openp \rho ,c\closep,$ between almost representations \(\rho: \Gamma \to U(k)\) and elements of \(H_2(\Gamma, \mathbb{Z}),\) was introduced in \cite{arXiv:2204.10354} using the bar-resolution definition of \(H_2(\Gamma, \mathbb{Z})\), see equation \eqref{eq:111f} below. 
\[\{\text{Almost representations}\}\times H_2(\G,\Z)\to \Z,\quad  (\rho,[c])\mapsto \openp \rho,c\closep.\] 
We show that the two pairings can be identified via the isomorphism $\varphi:H_2(\Gamma,\mathbb{Z})^{Hopf} \to H_2(\Gamma,\mathbb{Z})$, in the sense that \[\pmb{\langle} \rho, r\pmb{\rangle}=\openp \rho,\varphi(r)\closep.\]

While this fact can be derived from \cite[Prop. 4.1]{EEE}, in the sequel, we will give a proof that allows for quantitative estimates.
The isomorphism of the two pairings is useful because, depending on the context, it can be advantageous to use the form of the pairing that is best suited to the task at hand.

{Furthermore, we give a geometric interpretation of the pairings.
This is easier to explain if $B\G$ admits a compact model. Under that assumption, 
 there exist $\delta>0$ and a finite set $S\subset \G$ such that for any $(S,\delta)$-representation  $\rho:\G\to U(k),$ the pushforward of the Mishchenko line-bundle is a vector bundle  $E_\rho$ on $B\G,$ see \cite{BB}, \cite{AA}.   In this situation we show that  $\openp \rho,c\closep$ coincides with the Kronecker pairing between the first Chern class of $E_\rho$, $c_1(E_\rho)\in H^2(B\G,\Q)$ and $[c]\in H_2(B\G,\Z)$:
\[\openp \rho,c\closep=\langle c_1(E_\rho), [c]\rangle.\]
This equality will be derived as a consequence of Theorem~\ref{thm:c11}
which shows that $c_1(E_\rho)=[\omega_\rho],$ where $\omega_\rho$ is the local 2-cocycle associated to $\rho$.}

Hopf's formula expresses the second homology of \(\Gamma\)  
in terms of a free presentation 
\[1 \to R \to F \stackrel{q}{\longrightarrow} \Gamma \to 1,\] 
where \(q(a)=\bar{a}\), as 
\[H_2(\Gamma,\mathbb{Z})^{Hopf}=\frac{R\cap [F,F]}{[R,F]}.\] 
Each element \(r \in H_2(\Gamma,\mathbb{Z})\) can be represented by a product of commutators \(\prod_{i=1}^{g} [a_i,b_i]\) with \(a_i,b_i \in F\), for some integer \(g \geq 1\), such that \(\prod_{i=1}^{g} [\bar{a}_i,\bar{b}_i] = 1\).
Recall the canonical isomorphisms  $H^2(\G,\Q)\to H^2(B\G,\Q)$ and $H_2(\G,\Z)\to H_2(B\G,\Z).$ The following composition of maps will play a role in the sequel.
\[\begin{tikzcd}
			H_2(\Gamma,\mathbb{Z})^{Hopf}\ar[r, "\varphi"] & H_2(\Gamma,\mathbb{Z})
		\ar[r, "\cong"] &   H_2(B\G,\Z)
	\end{tikzcd}\]
 
 The Chern character in $K$-homology yields an isomorphism
	  $$
	  \operatorname{ch}_* \otimes \id{ \Q }: K_*(B \Gamma) \otimes_{ \Z } \Q \xrightarrow{\cong} H_*(B \Gamma ; \Q )=H_*(\Gamma ; \Q )
	  $$
	Matthey \cite{MR1951251},  \cite{MR2041902}, see also \cite{Bettaieb-Matthey-Valette}, constructed a natural rationally injective homomorphism
	$$
	\beta_2^\G: H_2(B \Gamma ; \Z ) \longrightarrow K_0(B \Gamma),
	$$
 which  is   rationally a  right-inverse of the Chern character:
	\begin{equation}\label{eqn:inverse}
	(\operatorname{ch} \otimes \id{ \Q }) \circ(\beta_2^\G \otimes \id{\Q})=\id{H_2(B\G ; \Q )} 
	\end{equation}
For simplicity we will write $\beta^\G$ in place of $\beta_2^\G$ and denote by $\beta^Y$ its restriction to subspaces $Y$ of $B\G.$

 Let $\alpha^\G : H_2(\G,\Z) \to K_0(\ell^1(\G))$ be the composition \mbox{$\alpha^\G=\mu_1^\G  \circ \beta^\G$} 
 \[\begin{tikzcd}
			H_2(B\G,\Z) \ar[r, "\beta^\G"] & K_0(B\G)
		\ar[r, "\mu_1^\G"] &   K_0(\ell^1(\G))
	\end{tikzcd}\]
 where $\mu_1^\G$ is  the $\ell^1$-version of the assembly map of Lafforgue \cite{Lafforgue}.
We abbreviate the winding number of a closed loop $L$ in $\C\setminus\{0\}$ by $\mathrm{wn} (L)$.
The  linear extension $\rho:\ell^1(\G)\to M_k(\C)$ of a sufficiently multiplicative unital map
 $\rho:\G \to U(k)$ satisfies the following generalization of the Exel-Loring formula: 
 \vskip 4pt
\begin{theorem}[\cite{DDD}]\label{thm:ddd} 
\emph{ Let $r\in H_2(\G,\Z)^{Hopf}$ be represented by  a product of commutators
 $\prod_{i=1}^{g} [a_i,b_i]$ with $a_i,b_i \in F$ and $\prod_{i=1}^{g} [\bar{a}_i,\bar{b}_i]=1$.
 There exist a finite set $S \subset \G$  and $\delta>0$ such that if \(\rho: \Gamma \to U(k)\) is an $(S,\delta)$-representation, then}
 \small{ \begin{equation*}
  \rho_\sharp( \alpha^\G(\varphi(r)))=\mathrm{wn}
  \det\left((1-t)1_n+t\prod_{i=1}^{g} [\rho(\bar{a}_i),\rho(\bar{b}_i)] \right) =\frac{1}{2\pi i}\mathrm{Tr} \log \left(\prod_{i=1}^{g} [\rho(\bar{a}_i),\rho(\bar{b}_i)]\right)
 \end{equation*}
}
 \end{theorem}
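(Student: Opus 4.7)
The plan is to establish the two equalities separately, treating the middle term as a bridge between an abstract $K$-theoretic index and a concrete numerical invariant of the matrix $U := \prod_{i=1}^{g} [\rho(\bar a_i),\rho(\bar b_i)]$. First, under the $(S,\delta)$-hypothesis with $S$ large enough to contain all the subwords appearing in the commutator product and $\delta$ taken small enough, iterating Lemma~\ref{loghom} together with the relation $\prod [\bar a_i,\bar b_i]=1$ in $\Gamma$ forces $U$ to lie within $\tfrac12$ of the identity. The second equality $\mathrm{wn}\det((1-t)1_n+tU)=\tfrac{1}{2\pi i}\Tr\log U$ then reduces to the elementary identity $\log\det=\Tr\log$ on matrices near the identity, combined with the winding-number formula $\mathrm{wn}(\gamma)=\tfrac{1}{2\pi i}\int_\gamma d\log$ applied to the smooth loop $t\mapsto\det((1-t)1_n+tU)$, which sits in $\C^\times$ precisely because $U$ is close to $1$.

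For the first (and main) equality, the strategy is to interpret $\varphi(r)\in H_2(B\Gamma,\Z)$ geometrically through a closed oriented surface. The assumption $\prod[\bar a_i,\bar b_i]=1$ is exactly the defining relation of $\pi_1(\Sigma_g)$ for a genus-$g$ surface $\Sigma_g$, so the assignment $a_i\mapsto\bar a_i$, $b_i\mapsto\bar b_i$ factors through $\pi_1(\Sigma_g)$ and yields a classifying map $f:\Sigma_g\to B\Gamma$. A direct inspection of the Hopf formula alongside the standard $4g$-gon cell structure of $\Sigma_g$ shows that $\varphi(r)=f_*[\Sigma_g]$. By naturality of Matthey's class $\beta^{(\cdot)}$ and Lafforgue's $\ell^1$-assembly map $\mu_1^{(\cdot)}$ in the group variable, this reduces the computation of $\rho_\sharp(\alpha^\Gamma(\varphi(r)))$ to the analogous computation for the almost representation $\psi:=\rho\circ f_\#:\pi_1(\Sigma_g)\to U(k)$.

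The core computation I would carry out at the surface level goes through the local 2-cocycle $\omega_\rho$ of Definition~\ref{localCocycle}. For a finite subcomplex $Y\supset f(\Sigma_g)$, Theorem~\ref{thm:c11}(1) identifies $c_1(E^Y_\rho)=[\omega_\rho]$ in $H^2(Y,\R)$, so the Kronecker pairing $\langle c_1(E^Y_\rho),f_*[\Sigma_g]\rangle$ equals $\langle\omega_\rho,f_*[\Sigma_g]\rangle$. Triangulating the $4g$-gon and reading off its image cells in $B\Gamma$ expresses the latter pairing as an explicit finite sum of values $\omega_\rho(x,y)$ taken on the partial products that appear when opening up the commutators. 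Using Lemma~\ref{loghom} repeatedly to collapse $\Tr\log$'s of products, together with the cocycle relation of Proposition~\ref{prop:2cocycle}, this sum telescopes into the single expression $\tfrac{1}{2\pi i}\Tr\log U$.

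The main obstacle I foresee is the identity $\rho_\sharp\circ\alpha^\Gamma=\langle c_1(E_\rho),\cdot\rangle\circ\varphi$ that glues the analytic side ($\rho_\sharp\circ\mu_1^\Gamma\circ\beta^\Gamma$) to the topological side (Chern-class pairing). Since the target is torsion-free, it suffices to verify this rationally; there one combines the fact that $\rho_\sharp\circ\mu_1^\Gamma$ is the $K$-homology pairing with $[E_\rho]\in K^0(Y)$, with Matthey's identity \eqref{eqn:inverse} $\ch\circ\beta^\Gamma=\mathrm{id}$ on $H_2(B\Gamma,\Q)$, to conclude that the composition is pairing with the degree-two part of $\ch(E_\rho)$, namely $c_1(E_\rho)$. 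Controlling $S$ and $\delta$ uniformly so that all three ingredients — the classifying map $f$, the local cocycle $\omega_\rho$, and the winding-number interpretation of the index — are simultaneously defined and compatible, is the technical heart of the argument.
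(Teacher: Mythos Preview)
The paper does not give its own proof of Theorem~\ref{thm:ddd}; it is quoted from \cite{DDD} and then \emph{used} as an input. In particular, the paper's proof of Theorem~\ref{thm:c1} (the first part of Theorem~\ref{thm:c11}) runs through Corollary~\ref{indexx}, whose proof in turn invokes Theorem~\ref{thm:ddd}. Your proposal therefore contains a genuine circularity: in your ``core computation'' you invoke Theorem~\ref{thm:c11}(1) to identify $c_1(E^Y_\rho)=[\omega_\rho]$, but within this paper that identification is established precisely by appealing to Theorem~\ref{thm:ddd}. Put differently, your Steps
\[
\rho_\sharp(\alpha^\G(\varphi(r)))\;\stackrel{(\mathrm{A})}{=}\;\langle c_1(E_\rho),\varphi(r)\rangle\;\stackrel{(\mathrm{B})}{=}\;\langle[\omega_\rho],\varphi(r)\rangle\;\stackrel{(\mathrm{C})}{=}\;\tfrac{1}{2\pi i}\Tr\log U
\]
are each correct, but Step~(B) is, in the paper's logic, a \emph{consequence} of Theorem~\ref{thm:ddd} rather than an independent fact. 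The argument in the last paragraph of your proposal is essentially the content of the proof of Theorem~\ref{thm:c1}, which goes in the opposite direction.

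To turn your outline into a genuine proof you must supply Step~(B) by hand, at least for the surface group $\pi_1(\Sigma_g)$: one has to compute $\langle c_1(E_\psi),[\Sigma_g]\rangle$ directly from the almost-flat transition data of $E_\psi$ on a fixed CW structure of $\Sigma_g$, without appealing to Theorem~\ref{thm:c11}. This is an honest index calculation (of Exel--Loring type, carried out for a genus-$g$ surface rather than a torus) and is the actual content of \cite{DDD}. Your use of naturality to transport the problem to $\Sigma_g$ via $f:\Sigma_g\to B\G$ is correct and is indeed how \cite{DDD} proceeds, and your treatment of the second equality (winding number versus $\Tr\log$) is fine; the missing piece is precisely the direct surface-level Chern-class computation. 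A minor side remark: the closeness of $U$ to $1$ does not follow from iterating Lemma~\ref{loghom} (which concerns $\Tr\log$, not norms) but simply from $(S,\delta)$-multiplicativity of $\rho$ applied to the relation $\prod_i[\bar a_i,\bar b_i]=1$.
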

 Here, if we write $\alpha^\G(\varphi(r)))=[p_0]-[p_1],$ where $p_i$ are projections in matrices over $\ell^1(\G)$, then  $\rho_\sharp( \alpha^\G(\varphi(r)))) = \rho_\sharp(p_0) -
 \rho_\sharp(p_1),$ where $\rho_\sharp(p_i)\in \Z$ is the rank of the perturbation of $(\mathrm{id}\otimes \rho )(p_i)$ to a projection via analytic functional calculus.
 
 The right-hand side of Theorem~\ref{thm:ddd} 
 defines 
a pairing
\begin{equation}\label{111d}
\bm{\langle} \rho, r \bm{\rangle}
=\frac{1}{2\pi i}\mathrm{Tr} \log \left(\prod_{i=1}^{g} [\rho(\bar{a}_i),\rho(\bar{b}_i)]\right)
\end{equation}
  between almost representations \(\rho: \Gamma \to U(k)\) and elements $r$ of the group homology  \(H_2(\Gamma, \mathbb{Z})^{Hopf}\) realized through the Hopf formula. 
  
  Another pairing $\openp \rho ,c\closep$ between almost representations \(\rho: \Gamma \to U(k)\) and elements of \(H_2(\Gamma, \mathbb{Z})\) was introduced in \cite{arXiv:2204.10354} using the bar-resolution definition of \(H_2(\Gamma, \mathbb{Z})\).
  Let $c=\sum_{j=1}^{m}k_j [x_j|y_j]\in Z_2(\G,\Z)$ be a $2$-cycle. Let \(\rho: \Gamma \to U(k)\)  be an $(S,\delta)$-representation such that $x_j,y_j\in S$ and $0<\delta<1.$ Then 
 \begin{equation}\label{eq:111f}
\openp \rho ,c\closep=\frac{1}{2\pi i}\sum_{j=1}^{m} k_j\mathrm{Tr}\left( \log \left(\rho(x_j)\rho(y_j)\rho(x_jy_j)^{-1}\right)\right) \end{equation}
{
\begin{remark}
If one replaces $r$ and $c$ by homologous 2-cycles $r'$ and $c'$ the pairings yield the same values provided that $S$ is sufficiently large and $\delta$ is sufficiently small: \(\pmb{\langle} \rho, r'\pmb{\rangle}=\pmb{\langle} \rho, r' \pmb{\rangle}\), and  $\openp \rho,c\closep=\openp \rho,c'\closep.$
While the second equality is proved directly in~\cite{arXiv:2204.10354}, we think that it is more conceptual to point out that it follows from Proposition~\ref{prop:2cocycle} by using the (local) cocycle condition.
On the other hand, the first equality follows from Theorem~\ref{thm:ddd}.
\end{remark}
}
We have the following consequence of \cite[Prop. 4.1]{partA}:
\begin{proposition}\label{prop:2.3+}
Let $\Gamma$ be a countable discrete group and let $\varphi:H_2(\Gamma,\mathbb{Z})^{Hopf} \to H_2(\Gamma,\mathbb{Z})$ be the canonical isomorphism. 
Let $[r]\in H_2(\G,\Z)$ be represented by $r=\prod_{i=1}^g[a_i,b_i]$ and let $\varphi([r])$ be represented by a 2-cycle   $\varphi(r)=\sum_{j=1}^{m}k_j [x_j|y_j]$ in the bar resolution.
There exist  a finite set $S\subset\Gamma$ and $\delta>0$, such that for any  $(S,\delta)$-representation $\rho:\G \to U(k)$ we have
\[\pmb{\langle} \rho, r\pmb{\rangle}=\openp \rho,\varphi(r)\closep.\]
\end{proposition}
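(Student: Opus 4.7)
The plan is to exhibit a specific bar 2-cycle $c_r$ representing $\varphi(r)$ and to show that both pairings approximate a common real number $\Sigma$ arbitrarily well as $\delta\to 0$. Since both pairings take integer values---$\pmb{\langle}\rho,r\pmb{\rangle}$ by Theorem~\ref{thm:ddd} and $\openp\rho,c_r\closep$ by the integrality clause in Theorem~\ref{thm:c11}(1) (applied to any finite subcomplex supporting $c_r$)---any such approximation with error less than $1$ forces exact equality. By the Remark above, it suffices to verify the identity for a single bar representative of $\varphi(r)$.

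Write the product of commutators as a word $s_1 s_2 \cdots s_{4g} = e$ in $\G$, with each $s_j \in \{\bar a_i, \bar b_i, \bar a_i^{-1}, \bar b_i^{-1}\}$. I would choose $S$ to contain all the $s_j$ together with all prefixes $s_1 \cdots s_j$, and $\delta$ small enough that: (i) each matrix inverse $\rho(\bar s)^{-1}$ in $[\rho(\bar a_i),\rho(\bar b_i)]$ can be replaced by $\rho(\bar s^{-1})$ with $O(\delta)$ error under $\Tr\log$, using $\rho(\bar s)\rho(\bar s^{-1})\approx 1_k$; (ii) every matrix $\rho(x)\rho(y)\rho(xy)^{-1}$ arising below lies within $\tfrac12$ of $1_k$, so Lemma~\ref{loghom} applies; (iii) the cocycle condition of Proposition~\ref{prop:2cocycle} holds on all relevant triples. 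Iterating $\rho(x)\rho(y) = [\rho(x)\rho(y)\rho(xy)^{-1}]\rho(xy)$ from the left gives
\[
\rho(s_1)\rho(s_2)\cdots\rho(s_{4g}) \;=\; u_{1,2}\,u_{12,3}\cdots u_{1\cdots(4g-1),\,4g}\cdot\rho(s_1\cdots s_{4g}),
\]
where $u_{1\cdots(j-1),\,j} := \rho(s_1\cdots s_{j-1})\rho(s_j)\rho(s_1\cdots s_j)^{-1}$ and the trailing factor is $\rho(e)=1_k$. Repeated application of Lemma~\ref{loghom} then yields
\[
\pmb{\langle}\rho,r\pmb{\rangle} \;=\; \frac{1}{2\pi i}\Tr\log\!\Bigl(\prod_{i=1}^g[\rho(\bar a_i),\rho(\bar b_i)]\Bigr) \;=\; \Sigma + O(\delta), \qquad \Sigma := \sum_{j=2}^{4g}\omega_\rho(s_1\cdots s_{j-1},\,s_j).
\]

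The \emph{fan} 2-chain $F_r := \sum_{j=2}^{4g}[s_1\cdots s_{j-1}\,|\,s_j]$ has boundary $\partial_2 F_r = \sum_j[s_j] - [e]$, so it is \emph{not} a 2-cycle. Using the elementary identities $\partial_2[\bar s|\bar s^{-1}]=[\bar s]+[\bar s^{-1}]-[e]$ and $\partial_2[e|e]=[e]$, I would define
\[
c_r \;:=\; F_r - \sum_{i=1}^g\bigl([\bar a_i|\bar a_i^{-1}]+[\bar b_i|\bar b_i^{-1}]\bigr) - (2g-1)[e|e];
\]
a direct calculation confirms $\partial_2 c_r = 0$. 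Each correction contributes $\omega_\rho(\bar s,\bar s^{-1})=O(\delta)$ or $\omega_\rho(e,e)=0$ to the pairing, so $\openp\rho, c_r\closep = \Sigma + O(\delta)$ as well. The remaining step is the identification $[c_r] = \varphi(r)$ in $H_2(\G,\Z)$: this is classical, and can be verified by tracing the Hopf isomorphism through a set-theoretic section of $q:F\to\G$, with the correction terms precisely reflecting the failure of the section to respect inverses and the identity. Once this is in place, both pairings equal $\Sigma + O(\delta)$, and integrality forces exact equality.

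The main obstacle is the chain-level identification $[c_r] = \varphi(r)$. Although this is a classical computation, a clean verification requires careful combinatorial bookkeeping to match the explicit inverse- and basepoint-correction terms with the algebraic form of $\varphi$ coming from the five-term exact sequence of $1 \to R \to F \to \G \to 1$; all other steps are straightforward iterations of Lemma~\ref{loghom} together with the approximate-cocycle property of $\omega_\rho$.
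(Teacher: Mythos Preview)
Your approach has a genuine gap: the ``$O(\delta)$'' error terms are dimension-dependent. When you replace $\rho(\bar s)^{-1}$ by $\rho(\bar s^{-1})$, the resulting change in $\frac{1}{2\pi i}\Tr\log(\,\cdot\,)$ is of order $k\delta$, not $\delta$, since $|\Tr(X)|\le k\|X\|$ for $k\times k$ matrices. Likewise each correction term $\omega_\rho(\bar s,\bar s^{-1})=\frac{1}{2\pi i}\Tr\log(\rho(\bar s)\rho(\bar s^{-1}))$ satisfies only $|\omega_\rho(\bar s,\bar s^{-1})|=O(k\delta)$. The proposition requires a single pair $(S,\delta)$ that works for \emph{all} $\rho:\G\to U(k)$ with $k$ arbitrary, so an error bound depending on $k$ cannot be driven below $1$ by shrinking $\delta$ alone. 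Your integrality-plus-approximation strategy therefore fails as stated. (In fact your two error terms $E_1$ and $E_2$ turn out to be \emph{equal}, not merely small---this can be checked using conjugation-invariance of $\Tr\log$---so the strategy is salvageable, but only by proving exact cancellation rather than smallness.)

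There is also a circularity: you appeal to the integrality clause of Theorem~\ref{thm:c11}(1) to know $\openp\rho,c_r\closep\in\Z$, but the proof of that clause (via Theorem~\ref{thm:c1} and Corollary~\ref{indexx}) uses Proposition~\ref{prop:2.3+} itself.

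The paper avoids both issues by working with a different explicit representative of $\varphi(r)$, namely $\sum_i d_i$ with
\[
d_i=[I_{i-1}|\bar a_i]+[I_{i-1}\bar a_i|\bar b_i]-[I_{i-1}\bar a_i\bar b_i\bar a_i^{-1}|\bar a_i]-[I_i|\bar b_i],\qquad I_i=[\bar a_1,\bar b_1]\cdots[\bar a_i,\bar b_i],
\]
taken from \cite[II.5, Exercise~4]{iBrown:book-cohomology}. The point of this choice is that the four $L$-terms in $\openp\rho,d_i\closep$ telescope \emph{exactly}---using only Lemma~\ref{loghom} and trace cyclicity---to $L\bigl(\rho(I_{i-1})[\rho(\bar a_i),\rho(\bar b_i)]\rho(I_i)^{-1}\bigr)$, and summing over $i$ collapses (again exactly, since $I_0=I_g=e$) to $L\bigl(\prod_i[\rho(\bar a_i),\rho(\bar b_i)]\bigr)$. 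No approximation, no integrality argument, and no dimension-dependence enter. Your fan-plus-corrections cycle $c_r$ is a legitimate representative of $\varphi(r)$, but it is less well adapted to the algebra of $\Tr\log$; the paper's representative is chosen precisely so that the inverse letters $\bar a_i^{-1},\bar b_i^{-1}$ never appear inside $\rho$, eliminating the $\rho(\bar s)^{-1}$ versus $\rho(\bar s^{-1})$ discrepancy that generates your error terms in the first place.
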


\begin{proof}
This can be established by applying \cite[Proposition 4.1]{partA} and proceeding via a proof by contradiction. Nevertheless, we rewrite the proof from \cite{partA} to make the computation of an explicit $S$ and $\delta$ possible.

By \cite[chapter II.5 Exercise 4]{iBrown:book-cohomology}, 
 if $r\in H_2(\G,\Z)$ is represented by $\prod_{i=1}^g[a_i,b_i]$ in the Hopf formula,
then a 2-cycle representative for the class of $\varphi(r)$ is the element $\sum_{i=1}^g d_i$,  where 
\begin{equation}\label{eq:hoppy}
    d_i=[I_{i-1}|\bar a_i]+[I_{i-1}\bar a_i|\bar b_i]-[I_{i-1}\bar a_i\bar b_i\bar a_i^{-1}|\bar a_i]-[I_i|\bar b_i]
\end{equation}
and $I_i=[\bar a_1,\bar b_1]\cdots[\bar a_i,\bar b_i]$. Let $[\varphi(r)]=\sum_{i=1}^g [d_i]$ with $d_i$ as in \eqref{eq:hoppy}. Let $L(m)=\frac{1}{2\pi i}\Tr(\log(m))$ and note that if $\rho$ is sufficiently multiplicative on a sufficiently large finite subset of $\G$ we may use Lemma~\ref{loghom} to compute
\begin{align*}
\langle[\rho],[d_i]\rangle=&L(\rho(I_{i-1})\rho(\bar a_i)\rho(I_{i-1}\bar a_i)^{-1})
+L(\rho(I_{i-1}\bar a_i)\rho(\bar b_i)\rho(I_{i-1}\bar a_i\bar b_i)^{-1})\\
&+L(\rho(I_{i-1}\bar a_i\bar b_i)\rho(\bar a_i)^{-1}\rho(I_{i-1}\bar a_i\bar b_i\bar a_i^{-1})^{-1})+L(\rho(I_i\bar b_i)\rho(\bar b_i)^{-1}\rho(I_i)^{-1})\\
=&L(\rho(I_{i-1})\rho(\bar a_i)\rho(\bar b_i)\rho(\bar a_i)^{-1}\rho(I_i{\bar b_i})^{-1}\rho(I_i\bar b_i)\rho(\bar b_i)^{-1}\rho(I_i)^{-1}) 
\\
=&L(\rho(I_{i-1})[\rho(\bar a_i),\rho(\bar b_i)]\rho(I_i)^{-1}).
\end{align*}
Consequently,
\[
\langle[\rho],[\varphi(r)]\rangle=\sum_{i=1}^g \,L(\rho(I_{i-1})[\rho(\bar a_i),\rho(\bar b_i)]\rho(I_i)^{-1})=L\left(\rho(I_1)\Big(\prod_{i=1}^g[\rho(\bar a_i),\rho(\bar b_i)]\Big)\rho(I_g)^{-1}\right)
.\]
Since $I_g=I_1=1$,  we obtain the desired conclusion.
\end{proof}
 \begin{corollary}\label{indexx}
 	For any $c\in Z_2(\G,\Z)$, there exist a finite set $S \subset \G$  and $\delta>0$ such that if $\rho:\G \to U(k)$ is an $(S,\delta)$-representation, then
 	 \begin{equation}\label{eq:indexc01+}
 		\rho_{\sharp}( \alpha^\G([c])) =\langle [\omega],[c]\rangle.
 	\end{equation}
 \end{corollary}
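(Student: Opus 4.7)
The plan is to reduce the corollary to a chain of identifications already established in the excerpt: Matthey's map $\beta^\G$ followed by the $\ell^1$-assembly map $\mu_1^\G$ produces $\alpha^\G([c])$; Theorem~\ref{thm:ddd} evaluates $\rho_\sharp \circ \alpha^\G$ in terms of the Hopf pairing $\pmb{\langle}\rho,r\pmb{\rangle}$; Proposition~\ref{prop:2.3+} identifies this with the bar-resolution pairing $\openp\rho,\varphi(r)\closep$; and the definition \eqref{eq:111f} together with Lemma~\ref{2cy-cocy}(c) identifies the bar pairing with the Kronecker pairing $\langle[\omega_\rho],[c]\rangle$.

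In detail, I would first use that $\varphi:H_2(\G,\Z)^{Hopf}\to H_2(\G,\Z)$ is an isomorphism to pick $r\in R\cap[F,F]$ with $r=\prod_{i=1}^g [a_i,b_i]$ such that $\varphi([r])=[c]$. Writing $\varphi(r)=\sum_{j=1}^m k_j[x_j|y_j]$ as a bar 2-cycle homologous to $c$, let $S_0$ be the finite set consisting of the group elements $\bar a_i,\bar b_i$, the prefixes $I_i=[\bar a_1,\bar b_1]\cdots[\bar a_i,\bar b_i]$ needed in the proof of Proposition~\ref{prop:2.3+}, the elements $x_j,y_j,x_jy_j$ appearing in $\varphi(r)$, and the elements $s,t,st$ appearing in a 3-chain witnessing $\varphi(r)-c=\partial_3 e$. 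By Proposition~\ref{well-defined}, Proposition~\ref{prop:2cocycle}, Theorem~\ref{thm:ddd}, and Proposition~\ref{prop:2.3+}, there is $\delta>0$ and a finite set $S\supseteq S_0$ so that, simultaneously, $\omega_\rho$ is a local 2-cocycle on every 2-simplex of the $\Delta$-subcomplex containing $c,\varphi(r)$ and $e$, the conclusion of Theorem~\ref{thm:ddd} holds for $r$, and the equality $\pmb{\langle}\rho,r\pmb{\rangle}=\openp\rho,\varphi(r)\closep$ of Proposition~\ref{prop:2.3+} holds.

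Chaining these together gives
\[
\rho_\sharp\bigl(\alpha^\G([c])\bigr)=\rho_\sharp\bigl(\alpha^\G(\varphi([r]))\bigr)=\pmb{\langle}\rho,r\pmb{\rangle}=\openp\rho,\varphi(r)\closep.
\]
Finally, since $\varphi(r)$ and $c$ differ by $\partial_3 e$ and $\omega_\rho$ satisfies the 2-cocycle equation on every 3-simplex appearing in $e$ (by Proposition~\ref{prop:2cocycle} applied to elements of $S$), the bar-pairing $\openp\rho,\cdot\closep$ takes the same value on $\varphi(r)$ and on $c$, namely the Kronecker pairing $\langle[\omega_\rho],[c]\rangle$ of Lemma~\ref{2cy-cocy}(c). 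This yields \eqref{eq:indexc01+}.

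The main nuisance, rather than a deep obstacle, is bookkeeping: one must choose $S$ large enough and $\delta$ small enough to ensure that every logarithm invoked in Theorem~\ref{thm:ddd}, Proposition~\ref{prop:2.3+}, and the local cocycle identity for $\omega_\rho$ is taken of a unitary within $\tfrac12$ of the identity (so that Lemma~\ref{loghom} applies), and that all three pairings are simultaneously well-defined for the finitely many group elements appearing in $r$, in $\varphi(r)$, and in the bounding chain $e$. Once such $(S,\delta)$ is fixed, the corollary follows by concatenating the three previously established identifications.
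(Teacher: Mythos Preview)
Your proposal is correct and follows essentially the same route as the paper: both arguments write $[c]=\varphi([r])$, invoke Theorem~\ref{thm:ddd} for $\rho_\sharp(\alpha^\G(\varphi(r)))=\pmb{\langle}\rho,r\pmb{\rangle}$, apply Proposition~\ref{prop:2.3+} to pass to $\openp\rho,\varphi(r)\closep$, and then identify this bar-pairing with the Kronecker pairing $\langle[\omega],[c]\rangle$ using that $\omega$ is a local cocycle on a finite subcomplex supporting both $c$ and $\varphi(r)$. The paper's write-up is slightly terser---it observes directly that $\openp\rho,c\closep=\langle[\omega],[c]\rangle$ holds at the level of cycles by comparing \eqref{eq:111f} with the definition of $\omega$, and then uses that $[\omega]$ is a genuine cohomology class on $Y$ to pass freely between $[c]$ and $[\varphi(r)]$---whereas you spell out the bounding $3$-chain $e$ explicitly; but this is a difference in bookkeeping, not in mathematical content.
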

 \begin{proof}
 	The cycle $c$ is supported by some finite subcomplex $Y$ of $B\G$. Having $Y$ fixed, by Proposition~\ref{well-defined} there are suitable $S$ and $\delta$ so that  $\omega\in Z^2(\G,\R)_Y$ is well-defined and $\langle [\omega],[c]\rangle$ is meaningful as discussed in Lemma~\ref{2cy-cocy}. By equation~\eqref{eq:111f} and definition of $\omega$,   the pairing 
 	$\openp \rho, c\closep$ coincides with the following Kronecker pairing: $$\openp \rho, c\closep=\langle [\omega],[c]\rangle.$$ 
 	Substituting $c=\varphi(r),$  we rewrite this as  $\openp \rho, \varphi(r)\closep=\langle \omega,\varphi(r)\rangle.$ By
 	Proposition~\ref{prop:2.3+}, we deduce that $\pmb{\langle} \rho, r\pmb{\rangle}=\langle \omega,\varphi(r)\rangle.$
 	The desired conclusion follows by applying Theorem~\ref{thm:ddd}, according to which
 	$ \rho_\sharp( \alpha^\G(\varphi(r)))=\pmb{\langle} \rho, r\pmb{\rangle}.$
 \end{proof}
 	\section{ Proof of the first part of Theorem \ref{thm:c11}}\label{Proof1}
 Consider the Mishchenko line-bundle $\ell_\G$ with fiber $\ell^1(\G)$ defined by
${E\G}\times_\G C^*(\G)\to B\G$, where $\G\subset \ell^1(\G)$ acts diagonally.
  Let $Y$ be a  finite subcomplex $Y\subset B\G$. 
Let  $(U_i)_{i\in I}$ be a finite covering of $Y$ by open sets
such that $\ell$ is trivial on each $U_i$ and $U_i\cap U_j$ is connected. Using  trivializations
of restrictions of $\ell$  to $U_i$ one obtains group elements $s_{ij}\in G$
 which define a 1-cocycle that is constant on each nonempty set $U_i\cap U_j$  and which represents $\ell_Y$, the restriction of $\ell$ to $Y$.  Thus $s_{ij}^{-1}=s_{ji}$ and $s_{ij} \cdot s_{jk}=s_{ik}$ whenever $U_i\cap U_j \cap U_k \neq \emptyset.$
 Let $(\chi_i)_{i\in I}$
be positive continuous functions with $\chi_i$ supported in $U_i$ and such that
$\sum_{i\in I} \chi^2_i=1$. Set $m=|I|$ and let $(e_{ij})$ be the canonical matrix unit of $M_m(\C)$. Then $\ell_Y$ is represented by the selfadjoint projection
\[e_Y= \sum_{i,j\in I}  \chi_i\chi_j\otimes s_{ij}\otimes e_{ij}\in C(Y)\otimes \ell^1(\G)\otimes M_{m}(\C).\]

\begin{definition}\label{def:pushf}
    For a unital map $\rho:\G \to U(k),$ we denote again by $\rho$ its contractive linear extension $\ell^1(\G)\to M_k(\C)$.
Consider a finite set $S \subset \G$ such that $\{s_{ij}:i,j \in I\}\subset S$  and $\delta>0$.  
Suppose that $\|\rho(st)-\rho(s)\rho(t)\|<\delta$ for all $s,t \in S.$
Then the element $$h_Y=(\id{} \otimes \rho \otimes \id{} )(e_Y)=\sum_{i,j\in I}  \chi_i\chi_j\otimes \rho(s_{ij})\otimes e_{ij}\in C(Y)\otimes M_{k}(\C)\otimes M_{m}(\C),$$
is almost a projection.
Using functional calculus, if $\delta$ is sufficiently small,  one finds an projection $e_Y \in C(Y)\otimes M_{k}(\C)\otimes M_{m}(\C)$ close to $h_Y$.  We denote by {$E^Y_\rho$} the  vector bundle representing $e_Y$ and write $E^Y_\rho$=$E_\rho$ if $Y=B\G$ is compact.
\end{definition}
In Proposition~\ref{near-cocycle} we verify that $E^Y_\rho$ is given by a cocycle $u_{ij}:U_i\cap U_j \to U(V)$  such that $u_{ij}$ is a small perturbation of $\rho(s_{ij})$
 
 \begin{theorem}\label{thm:c1} 
 Let $\G$ be a countable discrete group. For any finite subcomplex $Y$ of $B\G$,
		there are  a finite set $S\subset \G$ and $\delta>0$ such that for any $(S,\delta)$-representation $\rho:\G \to U(n),$ the vector bundle $E^Y_\rho$ is well-defined and
		
		(1)  The equation
\begin{equation}\label{cocoa}
  \omega(a,b):=\frac{1}{2\pi i}\Tr(\log(\rho(a)\rho(b)\rho(ab)^{-1})),
\end{equation}
		defines a local 2-cocycle $\omega \in Z^2(\G,\R)_{Y}$  with the property that
		$c_1(E^Y_\rho)=[\omega]$ in $H^2(Y,\R).$ {Moreover, for any integral 2-cycle  $c\in Z_2(Y,\Z),$ the corresponding Kronecker pairing takes integral values, $\langle[\omega], [c]\rangle\in \Z$.}
		
  If $\rho$ is a projective representation $\rho:\G \to U(n)$ with cocycle $\lambda(a,b)=e^{2\pi i\sigma(a,b)},$ for $\sigma\in Z^2(\G,\Q)$ and $|\sigma(a,b)|<\delta$ for $a,b\in S$, then $\omega(a,b)=n\sigma(a,b)$ for all $a,b\in S$, hence 
  	$c_1(E^Y_\rho)=n{i^*[\sigma]},$ where $i^*:H^2(B\G,\Q)\to H^2(Y,\Q)$ is the map induced by the inclusion $i:Y \to B\G.$
\end{theorem}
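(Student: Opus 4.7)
The plan is to prove $c_1(E^Y_\rho)=[\omega]$ in $H^2(Y,\R)$ by matching Kronecker pairings against every integral $2$-cycle, then deduce integrality, and finally handle the projective case by a direct computation. Well-definedness of $\omega$ as a local cocycle was already established in Proposition~\ref{prop:2cocycle} and Proposition~\ref{well-defined}; that of $E^Y_\rho$ is implicit in Definition~\ref{def:pushf}.

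Fix $[c]\in H_2(Y,\Z)$. Corollary~\ref{indexx} gives
\[\langle[\omega],[c]\rangle=\rho_\sharp(\alpha^\G([c])).\]
The crux is to identify the right-hand side with $\langle c_1(E^Y_\rho),[c]\rangle$. By construction, $[E^Y_\rho]=\rho_*[\ell_Y]$ in $K^0(Y)$, since $e^\rho_Y$ is the perturbation-to-a-projection of the image of the Mishchenko projection under $\rho$. Combined with $\alpha^\G=\mu_1^\G\circ\beta^\G$ and the realization of the $\ell^1$-assembly map as a Kasparov cap product with $[\ell]$, naturality yields
\[\rho_\sharp(\alpha^\G([c]))=\langle[E^Y_\rho],\beta^Y([c])\rangle,\]
where the right-hand pairing is the $K^0(Y)$-$K_0(Y)$ Kronecker pairing. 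Applying the Chern character and invoking $(\ch\otimes\id{\Q})\circ(\beta^Y\otimes\id{\Q})=\id{H_2(Y,\Q)}$ from equation~\eqref{eqn:inverse}, this equals
\[\langle \ch(E^Y_\rho),[c]\rangle=\langle c_1(E^Y_\rho),[c]\rangle,\]
since only the degree-$2$ component of $\ch(E^Y_\rho)=n+c_1(E^Y_\rho)+\cdots$ pairs nontrivially against a $2$-cycle.

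Consequently $\langle[\omega],[c]\rangle=\langle c_1(E^Y_\rho),[c]\rangle$ for every $[c]\in H_2(Y,\Z)$. Since $Y$ is a finite CW complex, universal coefficients give $H^2(Y,\R)\cong\Hom(H_2(Y,\Z),\R)$, so the equality of pairings forces $[\omega]=c_1(E^Y_\rho)$ in $H^2(Y,\R)$. Integrality of $\langle[\omega],[c]\rangle$ follows because $c_1(E^Y_\rho)\in H^2(Y,\Z)$ already pairs integrally with integral cycles. For the projective case, $\rho(a)\rho(b)\rho(ab)^{-1}=e^{2\pi i\sigma(a,b)}1_n$ is close to $1_n$ when $|\sigma(a,b)|<\delta$, so the principal branch of $\log$ yields $\log(\rho(a)\rho(b)\rho(ab)^{-1})=2\pi i\sigma(a,b)1_n$, whence $\omega(a,b)=n\sigma(a,b)$, and therefore $c_1(E^Y_\rho)=n\,i^*[\sigma]$ in $H^2(Y,\Q)$.

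The main obstacle is the identification $\rho_\sharp(\alpha^\G([c]))=\langle[E^Y_\rho],\beta^Y([c])\rangle$. Since $\rho$ is only an almost representation and not a genuine $\ast$-homomorphism, the K-theoretic pushforward $\rho_*$ and the naturality of the cap-product step must be justified via the projection-perturbation construction and asymptotic stability of the induced K-theory map; this bookkeeping is where the machinery developed in \cite{BB}, \cite{DDD}, and \cite{EEE} is invoked.
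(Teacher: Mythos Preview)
Your proposal is correct and follows essentially the same route as the paper's proof: reduce to equality of Kronecker pairings against integral $2$-cycles, use Corollary~\ref{indexx} for one side, and for the other side invoke the index identity $\rho_\sharp(\mu_1(z))=\langle[E^Y_\rho],z\rangle$ (this is exactly \cite[Thm.~3.2]{BB}, recorded in the paper as equation~\eqref{eqref:indexxx}) together with the Chern-character compatibility and the inverse property~\eqref{eqn:inverse} of $\beta^Y$. The ``main obstacle'' you flag is precisely the step the paper outsources to \cite{BB}, so your citation there is on target; the only cosmetic difference is that the paper routes $[c]$ through the Hopf isomorphism $\varphi$ before applying Corollary~\ref{indexx}, which is immaterial since that corollary is already stated for bar-resolution cycles.
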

\begin{proof} By the universal coefficient theorem it suffices to show that 
	$$
	\langle c_{1}(E^Y_{\rho}), [c]\rangle=\langle [\omega], [c]\rangle,
	$$
	for all $c \in Z_{2}(Y,\Z)\subset Z_{2}(\G,\Z).$
	Recall the isomorphism $\varphi:H_2(\G,\Z)^{Hopf} \to H_2(\G,\Z) $ discussed earlier. We can write $[c]=\varphi(r)$ for $r\in H_2(\G,\Z)^{Hopf}$. Then $\varphi(r)\in H^2(\G,\Z)\cong  H_2(B\G,\Z).$
	By Corollary~\ref{indexx}  we have 
	\begin{equation}
		\rho_\sharp (\alpha^\G(\varphi(r)))=\langle [\omega], \varphi(r)\rangle,
	\end{equation}
	so that it suffices  to show that 
	\begin{equation}
		\rho_\sharp (\alpha^\G(\varphi(r)))=\langle c_{1}(E^Y_{\rho}), \varphi(r)\rangle.
	\end{equation}
	By definition $\alpha^\G=\mu_1^\G  \circ \beta^\G$.
	By  \cite[Thm. 3.2]{BB}, for any $z\in K_0(Y)$ and fixed projections $q_0,q_1$ in matrices over $\ell^1(\G)$ with $\mu_1(z)=[q_0]-[q_1],$ for all $S\subset \G$  sufficiently large and all $\delta$  sufficiently small, then  map $\rho:\ell^1(\G)\to \C$ satisfies
	\begin{equation}\label{eqref:indexxx}
		\rho_\sharp(\mu_1(z))=\langle [E^Y_\rho],z \rangle.
	\end{equation}
	where the right hand is given by the pairing $K^0(Y)\times K_0(Y)\to \Z$.
	We have a commutative diagram with vertical maps the Chern characters
	\[\begin{tikzcd}
		K^0(Y)\times K_0(Y)\arrow[ shift left=8 ]{d}{\ch} \arrow[shift right=8]{d}{\ch} \arrow[r] & \Z\ar[d] \\
		H^{ev}(Y,\Q)\times H_{ev}(Y,\Q) \arrow[r] & \Q
	\end{tikzcd}
	\]
	It follows from ~\eqref{eqref:indexxx}
	\[\rho_\sharp(\mu_1(z))=\langle \ch(E^Y_\rho),\ch(z)\rangle\]
	If we set  $z=\beta^Y(\varphi(r)),$ then by \eqref{eqn:inverse}
	$$(\ch\otimes \id{\Q}) (\beta^Y(\varphi(r))=\varphi(r)_\Q\in H_2(Y,\Q)$$ 
		and hence
	$$
	\rho_\sharp (\alpha^\G(\varphi(r)))=\rho_{\#}(\mu_1(\beta^\G(\varphi(r))))=\langle c_{1}(E^Y_{\rho}), \varphi(r)\rangle,
	$$
	as desired.
	{The left hand side is an integer and $\langle c_{1}(E_{\rho}), [c]\rangle$ is the image in $\Q$ of  the integer $\langle c_{1}(E_{\rho})_{\Z}, [c]_{\Z}\rangle$  given by the pairing $H^2(Y,\Z)\times H_2(Y,\Z)\to \Z$.}	
	
	For the second part of the statement, note that if $\rho$ is a projective representation with cocycle $\lambda(a,b)=e^{2\pi i\sigma(a,b)},$ for $\sigma\in Z_2(\G,\Q),$
	then 
	\[\omega(a,b):=\frac{1}{2\pi i}\Tr(\log(e^{2\pi i\sigma(a,b)})1_n)=n\sigma(a,b).\]
\end{proof}
 \begin{corollary}\label{chernofprojective}
 Let $x\in H^2(\G,\Z)$ be represented by a 2-cocycle $\sigma\in Z^2(\G,\Z)$. For any finite subcomplex $Y$ of $B\G$,
there is $n_0\in \N$ such that  if a unital map $\rho:\G \to U(k)$ satisfies the equation
  $$\rho(a)\rho(b)\rho(ab)^{-1}=e^{\frac{2\pi i}{n}\sigma(a,b)}1_k,\quad \forall a,b \in \G$$ for some $n\geq n_0$,
   then  $$c_{1}(E_{\rho})=
  \frac{k}{n} i^*(x)\in H^2(Y,\Q).$$
\end{corollary}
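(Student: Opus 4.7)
The strategy is to reduce directly to the second part of Theorem~\ref{thm:c1} by rescaling. Let $S \subset \Gamma$ and $\delta > 0$ be the finite subset and tolerance furnished by Theorem~\ref{thm:c1} for the given finite subcomplex $Y$, enlarged if necessary so that $S$ contains every group element labeling the cells of~$Y$ (so that the local cocycle construction of Section~\ref{sec:2} applies on $Y$). Since $\sigma : \Gamma^2 \to \mathbb{Z}$ is fixed and $S$ is finite, the quantity $M := \max_{a,b \in S} |\sigma(a,b)|$ is a finite nonnegative integer.

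First, I would choose $n_0 \in \mathbb{N}$ so that both of the following hold for all $n \geq n_0$: (i) $|\sigma(a,b)/n| < \delta$ for every $a, b \in S$, and (ii) $|e^{2\pi i \sigma(a,b)/n} - 1| < \delta$ for every $a, b \in S$. Both conditions hold as soon as $n > \max(M/\delta,\, 2\pi M/\delta)$, so one can take for instance $n_0 = \lceil 2\pi M / \delta \rceil + 1$. With this choice, the identity
\[
\rho(a)\rho(b) - \rho(ab) \;=\; \bigl(e^{2\pi i \sigma(a,b)/n} - 1\bigr)\rho(ab)
\]
shows that $\rho$ is automatically an $(S,\delta)$-representation, so that $E^Y_\rho$ is well-defined and Theorem~\ref{thm:c1} applies.

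Next, I would apply the projective-representation statement at the end of Theorem~\ref{thm:c1}, taking the rational cocycle there to be $\sigma/n \in Z^2(\Gamma,\mathbb{Q})$ and the unitary group dimension to be $k$. By construction, the hypothesis $|(\sigma/n)(a,b)| < \delta$ for $a,b \in S$ is satisfied, and the projective cocycle of $\rho$ has the required form $e^{2\pi i (\sigma/n)(a,b)}$. The theorem then yields
\[
\omega_\rho(a,b) \;=\; k \cdot \tfrac{1}{n}\sigma(a,b), \qquad a,b \in S,
\]
and consequently
\[
c_1(E^Y_\rho) \;=\; k \cdot i^*[\sigma/n] \;=\; \tfrac{k}{n}\, i^*[\sigma] \;=\; \tfrac{k}{n}\, i^*(x) \in H^2(Y,\mathbb{Q}),
\]
which is exactly the claim.

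There is no substantive obstacle here; the corollary is essentially a bookkeeping consequence of Theorem~\ref{thm:c1}(2). The only point requiring care is to ensure that the single threshold $n_0$ simultaneously enforces that $\rho$ is an $(S,\delta)$-representation \emph{and} that the rescaled cocycle $\sigma/n$ satisfies the smallness hypothesis $|\sigma(a,b)/n| < \delta$ on $S$ demanded by Theorem~\ref{thm:c1}; this is what forces $n_0$ to depend on the pair $(Y,\sigma)$ through the bound $M$.
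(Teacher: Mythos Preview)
Your proposal is correct and is precisely the argument the paper has in mind: the corollary is stated immediately after Theorem~\ref{thm:c1} with no separate proof, because it is exactly the specialization you carry out---take the rational cocycle $\sigma/n$, use finiteness of $S$ to bound $M$, and choose $n_0$ large enough that both the $(S,\delta)$-multiplicativity and the smallness condition $|\sigma(a,b)/n|<\delta$ hold. Your explicit bookkeeping (in particular, verifying condition~(ii) to guarantee $\rho$ is an $(S,\delta)$-representation) just spells out what the paper leaves implicit.
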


\begin{proposition}\label{prop:pullback}
  Suppose that $f:\G \to \La$ is a homomorphism of groups and that both $B\G$ and $B\La$ are finite $CW$-complexes.
  If $\rho:\La \to U(k)$ is sufficiently multiplicative, then $[f^*(E_\rho)]=[E_{\rho\circ f}]$ in $K^0(B\G)\otimes \Q$.
\end{proposition}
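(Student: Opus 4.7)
The plan is to check the equality in $K^{0}(B\G)\otimes\Q$ by testing against $K_{0}(B\G)$, invoking the index formula used in the proof of Theorem~\ref{thm:c1} together with the naturality of the $\ell^{1}$-assembly map under the group homomorphism $f$. Because $B\G$ is a finite CW-complex, the rational Chern characters identify $K^{*}(B\G)\otimes\Q$ with $H^{*}(B\G;\Q)$ and $K_{*}(B\G)\otimes\Q$ with $H_{*}(B\G;\Q)$, so the natural pairing
\[K^{0}(B\G)\otimes\Q \;\times\; K_{0}(B\G)\otimes\Q \;\longrightarrow\; \Q\]
is non-degenerate. Hence it suffices to verify $\langle [f^{*}E_{\rho}],z\rangle=\langle [E_{\rho\circ f}],z\rangle$ for every $z\in K_{0}(B\G)$.

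For $\rho$ sufficiently multiplicative, the index formula \cite[Thm.~3.2]{BB}, in the form used in the proof of Theorem~\ref{thm:c1}, gives
\[\langle [E_{\rho}],(Bf)_{*}z\rangle=\rho_{\sharp}\bigl(\mu_{1}^{\La}((Bf)_{*}z)\bigr)\quad\text{and}\quad \langle [E_{\rho\circ f}],z\rangle=(\rho\circ f)_{\sharp}\bigl(\mu_{1}^{\G}(z)\bigr).\]
Since the $K$-theoretic pairing satisfies $\langle [f^{*}E_{\rho}],z\rangle=\langle [E_{\rho}],(Bf)_{*}z\rangle$, the task reduces to the scalar identity $\rho_{\sharp}(\mu_{1}^{\La}((Bf)_{*}z))=(\rho\circ f)_{\sharp}(\mu_{1}^{\G}(z))$. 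This in turn follows from two pieces of naturality: (i) the $\ell^{1}$-assembly map is natural in the group, so that $\mu_{1}^{\La}\circ (Bf)_{*}=\ell^{1}(f)_{*}\circ \mu_{1}^{\G}$ as maps $K_{0}(B\G)\to K_{0}(\ell^{1}(\La))$; and (ii) the linear extension of $\rho\circ f$ to $\ell^{1}(\G)\to M_{k}(\C)$ coincides with the composition $\rho\circ \ell^{1}(f)$, so that $(\rho\circ f)_{\sharp}=\rho_{\sharp}\circ \ell^{1}(f)_{*}$ as functionals on $K_{0}(\ell^{1}(\G))$. Chaining these two equalities produces the desired identity of integers.

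The main technical point is to secure a single pair $(S,\delta)$, with $S\subset \La$ finite and $\delta>0$, that makes $\rho$ sufficiently multiplicative for all the above applications simultaneously. Since $B\G$ is finite, the cocycle data used to construct $E_{\rho\circ f}$ involves only a finite subset $T\subset\G$, and $f(T)$ is a finite subset of $\La$; enlarging $S$ to contain both $f(T)$ and the cocycle data for $E_{\rho}$ on $B\La$, and shrinking $\delta$ correspondingly, ensures that the BB index formula is valid on both sides, that the bundles $E_{\rho}$, $E_{\rho\circ f}$ are both well-defined, and that the naturality arguments close up to give the claimed equality in $K^{0}(B\G)\otimes\Q$.
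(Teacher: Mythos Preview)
Your proof is correct and follows essentially the same approach as the paper: reduce to the pairing with $K_{0}(B\G)$ via the universal coefficient theorem, apply the index formula \eqref{eqref:indexxx} from \cite{BB} on both sides, and conclude by the naturality of the $\ell^{1}$-assembly map together with $(\rho\circ f)_{\sharp}=\rho_{\sharp}\circ \ell^{1}(f)_{*}$. Your additional paragraph on choosing a single $(S,\delta)$ is a welcome clarification that the paper leaves implicit.
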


\begin{proof}
	By the universal coefficient theorem it suffices to check that
	\[\langle [f^*(E_\rho)], z\rangle=\langle[E_{\rho\circ f}], z\rangle,\quad \forall z \in K_0(B\G).\]
	Equivalently
	\[\langle [E_\rho], f_*(z)\rangle=\langle[E_{\rho\circ f}], z\rangle,\quad \forall z \in K_0(B\G),\]
 where $f_*:K_0(B\G)\to K_0(B\Lambda)$ is the map on K-homology induced by $f$.
	Using \eqref{eqref:indexxx} 
this is equivalent to 
	\begin{equation}\label{eqref:inde}
		\rho_\sharp(\mu_1 (f_*(z))=(\rho \circ f)_\sharp(z)
	\end{equation}
Since $\mu_1$ is a natural transformation, $\mu_1 (f_*(z))=f_*(\mu_1 (z))$ and hence
\[\rho_\sharp(\mu_1 (f_*(z))=\rho_\sharp(f_*(\mu_1 (z))) =(\rho \circ f)_\sharp(z).\]
\end{proof}
For a more general version of Proposition~\ref{prop:pullback} see Proposition~\ref{prop:pullback1}
	\section{ Proof of the second part of Theorem \ref{thm:c11}}

        In this section prove the second part of Theorem \ref{thm:c11}, see Theorem~\ref{thm:c12}.
   The proof is obtained by putting together  Propositions~\ref{near-cocycle}, ~\ref{prop:projB} and ~\ref{bundles}, which we believe to be of independent interest, with Theorem~\ref{thm:c1}.

Let $A$ be a unital $C^*$-algebra and let $V$ be a finitely generated (projective) right Hilbert $A$-module.  Let $\mathcal{L}(V)$ be the  $C^*$-algebra of adjointable $A$-linear operators acting on $V$. Let $X$ be a compact Hausdorff space and let $\mathcal{U}=(U_i)_{i\in I}$  be a finite open  cover of $X.$  Let $(\chi_i)_{i\in I}$
be positive continuous functions with $\chi_i$ supported in $U_i$ and such that
$\sum_{i\in I} \chi^2_i=1$. Set $m=|I|$ and let $(e_{ij})$ be the canonical matrix unit of $M_m(\C)=\mathcal{L}(\C^m)$.
Let $e_i:\C \to \C^m$ be the inclusion on the $i^{th}$-component and let $e_i^*: \C^m \to \C$ be its adjoint. Thus $e_i^*\circ e_j=e_{ij}$ and
$e_i\circ e_j^*=\delta_{i,j}\mathrm{id}_\C.$

The proof of the following proposition, which perturbs an approximate 1-cocycle to a true cocycle, is based on an idea of Kubota from \cite[Lemma~4.4]{Kubota3}. A different but more involved proof is due to Phillips and Stone \cite{MR832541}, \cite{MR1124246}, see also \cite{AA} and \cite{Carrion-Dadarlat}.

We use the following two elementary perturbation properties.
Let $h$ be a self-adjoint element in a unital $C^*$-algebra $D$ with $\|h\|\leq 1$ and $\|h^2 - h\| < \varepsilon < 2/9$. By functional calculus, there exists a self-adjoint projection $p\in D$ such that:
\begin{equation}\label{a1}
\|p-h\| < {3\varepsilon}/{2}
\end{equation}
If $p$ is a self-adjoint projection in $D$ and  $w\in D$ is a contraction with $\|w^*w - 1\| < \varepsilon$ and $\|ww^* - p\| < \varepsilon < 1/15$, then by functional calculus, there is a partial isometry $u\in D$ such that 
\begin{equation}\label{a2}
\|u-w\| < 12{\varepsilon}, \quad \text{and} \quad u^*u=1, \,\, uu^*=p.
\end{equation}
{\begin{proposition}\label{near-cocycle}
Suppose that
$v_{ij}:U_i\cap U_j \to U(V)$ is continuous, $v_{ij}=v_{ji}^{*},$ $v_{ii}=1$ and
\begin{equation}\label{almost-cocycle}
	\max_{i,j\in I} \sup_{x\in U_i\cap U_j \cap U_k} \|v_{ij}(x)v_{jk}(x)-v_{ij}(x)\|<\delta<1/40m^2.
 \end{equation} 	Then there exists a continuous 1-cocycle  $u_{ij}:U_i\cap U_j \to U(V),$ 
 $u_{ij}(x)u_{jk}(x)=u_{ik}(x)$ for all $x \in U_i\cap U_j\cap U_k,$ such that
\[\|u_{ij}(x)-v_{ij}(x)\|<66 m^2 \delta, \quad \forall x \in U_i\cap U_j.\]
\end{proposition}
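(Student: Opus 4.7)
The plan is to adapt Kubota's averaging trick: bundle the approximate cocycle $(v_{ij})$ into a single almost-projection in matrices over $C(X)\otimes\mathcal{L}(V)$, straighten it to a genuine projection via functional calculus, and read off a true cocycle from local partial isometries onto that projection.

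Concretely, set
$$h := \sum_{i,j\in I}\chi_i\chi_j\otimes v_{ij}\otimes e_{ij}\in C(X)\otimes\mathcal{L}(V)\otimes M_m(\C),$$
which is well defined and self-adjoint because $\chi_i\chi_j$ is supported in $U_i\cap U_j$ where $v_{ij}$ lives. For each $i$ introduce the column
$$\xi_i := \sum_{j\in I}\chi_j\otimes v_{ji}\otimes e_j,$$
defined over $U_i$ (so that each $v_{ji}$ makes sense wherever $\chi_j\neq 0$). A direct calculation using $v_{ij}^*=v_{ji}$ and $\sum_j\chi_j^2=1$ gives the exact identity $\xi_i^*\xi_i=1$ on $U_i$, while the $(j,k)$-entry of $\xi_i\xi_i^*-h$ is $\chi_j\chi_k(v_{ji}v_{ik}-v_{jk})$. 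Reading the hypothesis as $\|v_{ab}v_{bc}-v_{ac}\|<\delta$ and bounding this matrix sum in operator norm yields $\|\xi_i\xi_i^*-h\|=O(\delta)$ over $U_i$, hence globally $\|h^2-h\|=O(\delta)$.

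Because $\delta<1/40m^2$, the functional-calculus perturbation \eqref{a1} produces a global projection $p$ with $\|p-h\|=O(\delta)$. Restricting to $U_i$, where $\xi_i^*\xi_i=1$ and $\|\xi_i\xi_i^*-p\|=O(\delta)$, the perturbation \eqref{a2} delivers a partial isometry $W_i$ on $U_i$ with $W_i^*W_i=1$, $W_iW_i^*=p|_{U_i}$, and $\|W_i-\xi_i\|=O(\delta)$. Setting $u_{ij}:=W_i^*W_j$ on $U_i\cap U_j$, the identities $W_jW_j^*=p$ and $W_i^*p=W_i^*(W_iW_i^*)=W_i^*$ give
$$u_{ij}u_{jk}=W_i^*(W_jW_j^*)W_k=W_i^*pW_k=W_i^*W_k=u_{ik},\qquad u_{ij}^*u_{ij}=W_j^*pW_j=1,$$
so $(u_{ij})$ is a genuine continuous unitary $1$-cocycle with $u_{ii}=1$ and $u_{ji}=u_{ij}^*$.

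For the proximity estimate, expand
$$u_{ij}-v_{ij}=(W_i-\xi_i)^*W_j+\xi_i^*(W_j-\xi_j)+(\xi_i^*\xi_j-v_{ij}).$$
The first two summands are $O(\delta)$ by the previous step. For the third, on $U_i\cap U_j$ one has $\xi_i^*\xi_j=\sum_k\chi_k^2\,v_{ik}v_{kj}$, and at any point $x$ only indices $k$ with $\chi_k(x)>0$ contribute, placing $x$ in $U_i\cap U_j\cap U_k$ so that the pointwise bound $\|v_{ik}(x)v_{kj}(x)-v_{ij}(x)\|<\delta$ applies, giving $\|\xi_i^*\xi_j-v_{ij}\|\le\delta$. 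The main obstacle is not conceptual but numerical bookkeeping: the explicit constant $66m^2$ requires careful tracking of the factors $3/2$ and $12$ from \eqref{a1}, \eqref{a2} together with the $m$-dependent operator-norm bounds for the $m\times m$ matrix-valued sums, and arranging the sequence of estimates so that the accumulated constant fits inside the stated bound.
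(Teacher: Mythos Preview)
Your proposal is correct and follows essentially the same route as the paper: the isometries $\xi_i$ are exactly the paper's $v_i$, the corrected isometries $W_i$ are the paper's $u_i$, and the cocycle $u_{ij}=W_i^*W_j$ is verified identically. The only difference is cosmetic---the paper computes $\|h^2-h\|$ directly rather than via $\|\xi_i\xi_i^*-h\|$, and it carries the explicit constants through (obtaining $\|u_i-v_i\|<65m^2\delta/2$ and hence $\|u_{ij}-v_{ij}\|<65m^2\delta+\delta\le 66m^2\delta$) where you write $O(\delta)$ and defer the bookkeeping.
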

\begin{proof}
Consider the selfadjoint contractive element
\begin{equation}\label{a3}
h= \sum_{i,j\in I}  \chi_i\chi_j\otimes v_{ij}\otimes e_{ij}.
\end{equation}
 
  of the $C^*$-algebra $ C(X)\otimes \mathcal{L}(V)\otimes M_{m}(\C)$.
Then
\[h^2-h=\sum_{i,k} \left(\sum_j e_{ik}\otimes \chi_i\chi_k \chi_j^2 \otimes (v_{ij}v_{jk}-v_{ik})\right)\]
and hence $\|h^2-h\|<m^2\delta<1/10.$ It follows from \eqref{a1} that there is a self-adjoint projection $p\in C(X)\otimes \mathcal{L}(V)\otimes M_{m}(\C)$ with
\begin{equation}\label{a4}
	\|p-h\|<{3 m^2\delta}/{2}.
	\end{equation}
For each $i\in I$ and $x\in U_i$ define the isometry $v_i(x): V \to V\otimes \C^m$ by
\[v_i(x)=\sum_{r} \chi_r(x)\otimes v_{ri}(x)\otimes e_r.\]
Its range projection is denoted $p_i=v_iv_i^*\in C_b(U_i)\otimes \mathcal{L}(V)\otimes M_{m}(\C),$ and
\begin{equation}\label{a5}
p_i=\sum_{r,s} \chi_r\chi_s \otimes v_{ri}v_{is}\otimes e_{rs}.
\end{equation}
If we denote the restriction of $h$ to $U_i$ by $h$ as well, we obtain from \eqref{almost-cocycle} and \eqref{a3} that
\[\|h-p_i\|<m^2\delta.\]
Consequently,
\begin{equation}\label{a6}
\|p-p_i\|\leq \|p-h\|+\|h-p_i\|<{3 m^2\delta}/{2}+m^2\delta={5 m^2\delta}/{2}.
\end{equation}
Next perturb each $v_i$ to an isometry $u_i$ with range projection $p|_{U_i}$ as follows.
We denote the restriction of $p$ to $U_i$ by $p$ as well.
 Let $w_i=pv_i$. Then 
\[\|v_i-w_i\|=\|p_iv_i-pv_i\|< {5 m^2\delta}/{2}\]
\[\|w_i^*w_i-1\|=\|v_i^*(p-p_i)v_i\|<{5 m^2\delta}/{2}\]
\[\|w_iw_i^*-p\|=\|pv_iv_i^*p-p\|=\|p(p_i-p)p\|<{5 m^2\delta}/{2}.\]
Since ${5 m^2\delta}/{2}<1/15$ by hypothesis, it follows from \eqref{a2} that there is a isometry  $u_i$ such that
\[\|u_i-w_i\|< 12\cdot \frac{5}{2}m^2\delta= {30 m^2\delta},\] 
and
\begin{equation}\label{a7}
	u_i^*u_i=1,\quad u_iu_i^*=p|_{U_i}.
\end{equation}
Since
\begin{equation}\label{a9}
	v_i^*v_j=\sum_r \chi_r^2\otimes v_{ir}v_{rj} \otimes 1
\end{equation}
we obtain from \eqref{almost-cocycle} and \eqref{a9}
\begin{equation}\label{a10}
\|v_{ij}(x)-v_i(x)^*v_j(x)\|<\delta,\quad \forall x \in U_i\cap U_j.
\end{equation}
Then
\[\|u_i-v_i\|\leq \|u_i-w_i\|+\|w_i-v_i\|\leq {30 m^2\delta}+ {5 m^2\delta}/{2}={65 m^2\delta}/{2}\]
and hence
\begin{equation}\label{a111}
	\|u_i(x)^*u_j(x)-v_i(x)^*v_j(x)\|<{65 m^2\delta},\quad \forall x \in U_i\cap U_j.
\end{equation}
Let $u_{ij}:U_i\cap U_j \to U(V)$ be defined by
\[u_{ij}(x)=u_i(x)^*u_j(x)\]
From \eqref{a10} and \eqref{a111}
\[\|u_{ij}(x)-v_{ij}(x)\|< {65 m^2\delta}+\delta \leq 66 m^2 \delta, \quad \forall x \in U_i\cap U_j.\]
Moreover
\[u_{ij}(x)u_{jk}(x)=u_{ik}(x),  \quad \forall x \in U_i\cap U_j\cap U_k, \] since
\[u_{i}(x)^*u_{j}(x)\cdot u_{j}(x)^*u_{k}(x)=u_{i}(x)^*p(x)u_{k}(x)=u_{i}(x)^*\left(u_{i}(x)u_{i}(x)^*\right)u_{k}(x)=u_{i}(x)^*u_{k}(x).\]
\end{proof}
}
For $G$ a topological group, a principal $G$-bundle is flat 
if it has a set of trivializations with constant transition functions. Equivalently,
the bundle has a reduction to the group $G_d$, the underlying discrete group of $G$, \cite{Dupont}. 
\begin{proposition}\label{prop:projB}
	Let $E$ be a locally trivial hermitian vector bundle of rank $n$ on a paracompact Hausdorff space  $X$ such that the associated projective bundle $P(E)$ is isomorphic to a flat $PU(n)$-bundle.
	Then the Chern classes of $E$ are $c_k(E)=\frac{1}{n^k}{n\choose k}c_1(E)^k$ and  the Chern character is $\ch(E)=ne^{\frac{1}{n}c_1(E)}.$
\end{proposition}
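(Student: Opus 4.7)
The plan is to exploit two standard facts: (i) flat $PU(n)$-bundles have vanishing rational characteristic classes, and (ii) the rational cohomology ring $H^*(BPU(n);\Q)$ is generated by ``reduced Chern classes'' $\bar c_2,\ldots,\bar c_n$, which can be realized explicitly as
\[
  \bar c_k(F) := c_k(F \otimes L_F^{-1})\in H^{2k}(X;\Q),
\]
where $L_F$ is the formal rational line bundle with $c_1(L_F)=c_1(F)/n$. The right-hand side is a polynomial in the Chern classes $c_1(F),\ldots,c_n(F)$ that is invariant under tensoring $F$ by an arbitrary line bundle, hence descends to a characteristic class of $P(F)$; rationally these generate $H^*(BPU(n);\Q)\cong H^*(BSU(n);\Q)=\Q[\bar c_2,\ldots,\bar c_n]$, since $PU(n)$ is rationally homotopy equivalent to $SU(n)$.

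Applying this to $E$: the hypothesis that $P(E)$ is flat, combined with (i) (Chern--Weil applied to a flat connection on a smooth universal model, equivalently Milnor's theorem on flat bundles over paracompact bases), implies that every positive-degree rational characteristic class of $P(E)$ vanishes in $H^*(X;\Q)$. In particular $\bar c_k(E)=0$ for $k=2,\ldots,n$, and since $c_1(E\otimes L_E^{-1})=0$ by construction, the formal virtual bundle $E\otimes L_E^{-1}$ has all rational Chern classes equal to zero.

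By the splitting principle, the rational Chern roots of $E\otimes L_E^{-1}$ are then all zero, so the Chern roots of $E$ are all equal to $c_1(L_E)=c_1(E)/n$. This directly yields
\[
  c(E)=\Bigl(1+\tfrac{c_1(E)}{n}\Bigr)^n,\qquad c_k(E)=\tfrac{1}{n^k}\binom{n}{k}c_1(E)^k,\qquad \ch(E)=n\,e^{c_1(E)/n}.
\]

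The main technical obstacle is justifying (i) and (ii) over a general paracompact Hausdorff base. On a smooth manifold the whole proposition admits a clean Chern--Weil proof: any unitary connection on $E$ whose projection to $P(E)$ is flat has curvature of the form $F=\omega\cdot I_n$ for a scalar $2$-form $\omega$, so $[\omega/(2\pi i)]=c_1(E)/n$ and $\det(I+F/(2\pi i))=(1+\omega/(2\pi i))^n$ gives the total Chern class directly. The paracompact case then follows by a classifying-space reduction, or by appealing directly to Milnor's vanishing theorem for flat bundles, which ensures the map $H^{>0}(BPU(n);\Q)\to H^{>0}(BPU(n)^\delta;\Q)$ is zero.
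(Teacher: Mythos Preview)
Your proof is correct and takes a genuinely different route from the paper's, though both rest on the same two ingredients: the rational isomorphism $H^*(BPU(n);\Q)\cong H^*(BSU(n);\Q)$ coming from the $\Z/n$-cover $SU(n)\to PU(n)$, and the vanishing of positive-degree rational characteristic classes for flat bundles (the paper cites Dupont; you invoke Milnor/Chern--Weil).

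Where the two arguments diverge is in extracting the explicit formula for $c_k(E)$. The paper proceeds by an inductive filtration: in $H^*(BU(n);\Q)$ one writes $c_k=\pi^*(y_1)+c_1\pi^*(y_2)+\cdots+\lambda_k c_1^k$, then determines $\lambda_k=\frac{1}{n^k}\binom{n}{k}$ by restricting along the diagonal $d:BU(1)\to BU(n)$ (equivalently, evaluating on $L^{\oplus n}$). Flatness of $P(E)$ kills all the $\pi^*$ terms, leaving $c_k(E)=\lambda_k c_1(E)^k$; the Chern character formula then requires a separate splitting-principle computation. Your approach packages this more efficiently: by introducing the formal twist $E\otimes L_E^{-1}$ you get $c(E\otimes L_E^{-1})=1$ in one stroke from flatness, and the tensor-product formula for Chern classes immediately yields both $c(E)=(1+c_1(E)/n)^n$ and $\ch(E)=n\,e^{c_1(E)/n}$ without computing the $\lambda_k$ individually. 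The Chern--Weil heuristic you sketch (a connection on $E$ projecting to a flat connection on $P(E)$ has curvature $\omega\cdot I_n$) is a nice geometric explanation the paper does not give.

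The one step you should tighten is the assertion that twist-invariance of $\bar c_k$ implies it descends to $H^{2k}(BPU(n);\Q)$. This is true, but it uses that for the principal $K(\Z,2)$-fibration $BU(1)\to BU(n)\xrightarrow{\pi} BPU(n)$ the image of $\pi^*$ coincides rationally with the subring of classes invariant under the tensoring action $\mu:BU(1)\times BU(n)\to BU(n)$; one sees this by noting that $j^*:H^*(BU(n);\Q)\to H^*(BSU(n);\Q)$ restricts to an isomorphism on the invariant subring $\Q[\bar c_2,\ldots,\bar c_n]$ and that $(\pi\circ j)^*$ is an isomorphism, forcing $\mathrm{im}\,\pi^*$ to equal the invariants. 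Stating this explicitly would make your argument self-contained.
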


We thank Rufus Willet for alerting us that, in the case where $X$ is a manifold, this corresponds to~\cite[Proposition 2.3.1]{diffofbundles}.

\begin{proof} 
	Consider the commutative diagram: 
	\begin{equation}\label{unitarygroups}	\begin{tikzcd}
			1 \arrow[r] & \mathbb{Z}/n \arrow[r] \arrow[d, hook] & SU(n) \arrow[r] \arrow[d, hook, "j"] & PSU(n) \arrow[r] \arrow[d, equal] & 1 \\
			1 \arrow[r] & U(1) \arrow[r,"d"] & U(n) \arrow[r, "\pi"] & PU(n) \arrow[r] & 1
		\end{tikzcd}
	\end{equation}
and the corresponding fibration of classifying spaces
\[
\begin{tikzcd}
 B\mathbb{Z}/n \arrow[r]  & BSU(n) \arrow[r, "\pi\circ j"]  & BPU(n)   
\end{tikzcd}
\]
Since the rational cohomology of $\mathbb{Z}/n$ vanishes in positive degrees, the $E_2$-page of the Serre spectral sequence of this fibration is concentrated on the $q=0$ row. It follows that the edge homomorphism
 $$(\pi\circ j)^*:H^*(BPU(n);\Q)\to H^*(BSU(n);\Q)$$ 
 is an isomorphism of groups. 
 Consider the commutative diagram
 \begin{equation}\label{unitarygroupss}
 \begin{tikzcd}
 	H^*(X;\Q)  & H^*(BU(n);\Q)\arrow[d, "j^*"]\arrow[l, "d^*"'] \arrow[r]  &H^*(BPU(n);\Q) \arrow[l, "\pi^*"'] \arrow[dl, "\cong" 
 	]    \\
 	& H^*(BSU(n);\Q) &    
 \end{tikzcd}
	\end{equation}
 
 Recall that by \cite{Toda}, the map 
 \[
 j^*: H^*(BU(n); \Q) \to H^*(BSU(n);\Q) 
 \]
 is the canonical projection of graded algebras
 \[\mathbb{Q}[c_1,c_2, c_3, \ldots, c_n]\to \mathbb{Q}[c_2, c_3, \ldots, c_n]\]
 which maps $c_1$ to $0$  and $c_k$ to $c_k$ if $k>1.$ Here $c_k$   are the universal Chern classes, \( \deg(c_k) = 2k \). 
 
 It follows from the diagram \eqref{unitarygroupss} that for any $x\in H^{2k}(BU(n);\Q)$ there are $y\in H^{2k}(BPU(n);\Q)$ and $e\in\ker(j^*)\cap H^{2k}(BU(n);\Q) =c_1H^{2k-2}(BU(n);\Q)$ such that $x=\pi^*(y)+e$. By induction on $k$ we can thus see that 
 	$$x=\pi^*(y_1)+c_1\pi^*(y_2)+\cdots+c_1^{k-1}\pi^*(y_{k-1})+\lambda c_1^k,$$
 	for some $\lambda\in \Q$. In particular
 	\begin{equation}\label{ck}
 		c_k=\pi^*(y_1)+c_1\pi^*(y_2)+\cdots+c_1^{k-1}\pi^*(y_{k-1})+\lambda_k c_1^k,
 	\end{equation}
 	 for a unique constant $\lambda_k\in \Q.$
 	 Since the map $\pi \circ d$ is constant, it follows that $d^*\circ\pi^*=0$ in all degrees other than zero.
 	 Thus from equation~\eqref{ck} we deduce that
 	 \begin{equation}\label{aaa}
 	 	d^*(c_k)=\lambda_kd^*(c_1)^k.
 	 \end{equation}
 
 	Since $c_k$ is the $k$th Chern class of the tautological vector bundle $W$ over $BU(n)$, and $H^*(BU(1);\Q))=\Q[c_1]$ where $c_1$ is the first Chern class of the tautological line bundle, one sees that $d^*(W)$ is the $ n$-fold direct sum of the tautological line bundle over $BU(1)$. By the Whitney sum formula for the total Chern class, it follows that 
 	$$d^*(c_k)={n\choose k}c_1^k\quad \text{and in particular}\quad d^*(c_1)=nc_1.$$
 	In conjunction with equation~\eqref{aaa} we deduce that $\lambda_k=\frac{1}{n^k}{n\choose k}$.
 	
 
 If $f:X \to BU(n)$ is the classifying map of $E$ we have a  diagram
\[
\begin{tikzcd}
	H^*(X;\Q)  & H^*(BU(n);\Q)\arrow[l, "f^*"'] \arrow[r]  &H^*(BPU(n);\Q) \arrow[l, "\pi^*"']    
\end{tikzcd}
\]
If $G$ is a compact Lie group, then by \cite[p.145]{Dupont} the canonical inclusion $G_d \to G$ induces the zero map $H^q(BG;\Q)\to H^q(BG_d;\Q),$ $q>0.$ The assumption that $P(E)$ is flat means that the associated principal $PU(n)$-bundle is flat. It follows that  the classifying map of $P(E),$ namely $\pi\circ f: X \to BPU(n)$ factors up to homotopy through the map $BPU(n)_d\to BPU(n)$ and hence $f^*\circ \pi^*=0$ in all dimensions $k>0$.
Thus from equation~\eqref{ck} we deduce that
\begin{equation}\label{aaaa}
	f^*(c_k)=\lambda_kf^*(c_1)^k.
\end{equation}
Consequently,
\begin{equation}\label{zzz}
c_k(E)=\lambda_kc_1(E)^k=\frac{1}{n^k}{n\choose k}c_1(E)^k.
\end{equation}
The formula for the Chern character is proved via the splitting principle.
By this principle, there exists a space $Y$ and a map $p: Y \to X$ such that $p^*: H^*(X, \mathbb{Q}) \to H^*(Y, \mathbb{Q})$ is injective, and the pullback bundle splits as
$ p^* E \cong L_1 \oplus \cdots \oplus L_n,$
where $L_i$ are line bundles on $Y.$ 
We will show that $p^*\ch(E)=p^*(ne^{\frac{1}{n}c_1(E)})$ and this will imply the equality 
$\ch(E)=ne^{\frac{1}{n}c_1(E)}$ by injectivity of $p^*$.

 Let $x_i = c_1(L_i)$ and set $z=\frac{1}{n}(x_1+\cdots+x_n) \in H^2(Y, \mathbb{Q})$.
 The total Chern class is $c(p^*E)=(1+x_1)\cdots(1+x_n),$  and hence $c_1(p^*(E))=x_1+\cdots+x_n,$
 and more generally $c_k(p^*(E))=e_k(x_1, \ldots, x_n),$ where is the $k$-th elementary symmetric polynomial. On the other hand
 by \eqref{zzz} we have
\begin{equation*}
c_k(p^* E)  = p^* c_k(E)=\frac{1}{n^k}{n\choose k}c_1(p^*E)^k={n\choose k}\left(\frac{x_1+\cdots+x_n}{n}\right)^k={n\choose k}z^k.
\end{equation*} This shows that
$e_k(x_1, \ldots, x_n)=e_k(y_1, \ldots, y_n),$
where $y_1=\cdots=y_n=z$.
 Since 
  $ e^{x_1}+\cdots+e^{x_n}$ is a symmetric function, it follows that
 \[p^*\ch(E)=\ch(p^* E)=e^{x_1}+\cdots+e^{x_n}=e^{y_1}+\cdots+e^{y_n}=ne^z\]
We conclude the proof by observing that $$p^*(ne^{\frac{1}{n}c_1(E)})= ne^{\frac{1}{n}c_1(p^*E)}=ne^{\frac{1}{n}(x_1+\cdots+x_n)}=ne^z.$$
\end{proof}
Let $B$ be a C$^*$-algebra.  Let \( X \) be a compact metric space, and let \( (U_i)_{i \in I} \) be a finite open cover of \( X \). Let $\alpha=(\alpha_{ij})$ and $\beta=(\beta_{ij})$ be two 1-cocycles
\[
\alpha_{ij},\beta_{ij} : U_i \cap U_j \to \Aut(B).
\]
Let $A_\alpha$ and $A_\beta$ be the $C(X)$-C$^*$-algebras of continuous sections in the  locally trivial C$^*$-bundles with fiber $B$ associated to $\alpha$ and $\beta$.
Define \[\mathrm{dist}(\alpha,\beta):=\max_{i,j\in I}\sup_{x\in U_i\cap U_j}\|\alpha_{ij}(x)-\beta_{ij}(x)\|\]
\begin{proposition}\label{bundles}
	Let $B$ be a separable and nuclear $C^*$-algebra. If $\mathrm{dist}(\alpha,\beta)<10^{-11},$
	then $A_\alpha$ is isomorphic to $A_\beta$ as $C(X)$-C$^*$-algebras. Thus, the $\Aut(B)$-principal bundles constructed from the cocycles $(\alpha_{ij})$ and $(\beta_{ij})$ are isomorphic.
\end{proposition}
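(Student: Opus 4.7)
The plan is to construct a $C(X)$-linear ${}^*$-isomorphism $\Phi : A_\alpha \to A_\beta$ by realizing both bundles as corners of a single larger $C(X)$-algebra and exploiting that the associated projections are close whenever $\mathrm{dist}(\alpha,\beta)$ is small. The argument is closely modeled on the proof of Proposition~\ref{near-cocycle}.

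First, I would work inside $\mathcal{L}(\mathcal{E})$, the adjointable-operator algebra of the Hilbert $C(X,B)$-module $\mathcal{E} := C(X, B)^{\oplus m}$ with $m = |I|$. For $\ast \in \{\alpha,\beta\}$, define a selfadjoint element
$$h_\ast := \sum_{i,j \in I} \chi_i \chi_j \otimes \ast_{ij} \otimes e_{ij},$$
interpreted via the fibrewise action of $\ast_{ij}(x) \in \Aut(B)$ on the module structure (with the diagonal $\ast_{ii} = \mathrm{id}_B$). The cocycle identity for $\ast$ together with the partition-of-unity identity $\sum_i \chi_i^2 = 1$ then implies that $h_\ast$ is a projection, and the corner $h_\ast \mathcal{L}(\mathcal{E}) h_\ast$ is canonically isomorphic to $A_\ast$ as $C(X)$-algebras, paralleling the construction in Definition~\ref{def:pushf}.

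Next, the hypothesis $\mathrm{dist}(\alpha,\beta) < 10^{-11}$ will imply that $\|h_\alpha - h_\beta\|$ is small enough to apply the classical projection perturbation lemma (see \eqref{a1}--\eqref{a2}): there exists a unitary $U \in \mathcal{L}(\mathcal{E})$, close to the identity, with $U h_\alpha U^* = h_\beta$. Conjugation by $U$ produces the desired $C(X)$-algebra isomorphism $\Phi : A_\alpha \to A_\beta$. The corresponding isomorphism of $\Aut(B)$-principal bundles then follows from the standard equivalence between locally trivial C*-bundles with fibre $B$ and principal $\Aut(B)$-bundles.

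The main obstacle lies in the first step: because $\ast_{ij}$ is an automorphism of $B$ rather than an element of $B$, the symbols $\chi_i \chi_j \otimes \ast_{ij}$ must be interpreted via the twisted $C(X,B)$-module structure of $\mathcal{E}$ on overlaps, and the corner identification $h_\ast \mathcal{L}(\mathcal{E}) h_\ast \cong A_\ast$ requires careful verification. Nuclearity of $B$ enters here to ensure that the interior tensor products, the relevant inner products on $\mathcal{E}$, and the functional calculus on $\mathcal{L}(\mathcal{E})$ behave well. The small constant $10^{-11}$ is then extracted by combining the bound on $\|h_\alpha - h_\beta\|$ in terms of $\mathrm{dist}(\alpha,\beta)$ with the multiplicative overhead in the functional-calculus step in \eqref{a2}, after passing to a refinement of the cover if necessary to control the $|I|$-dependence.
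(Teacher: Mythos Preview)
Your approach has a genuine gap at exactly the point you yourself flag as ``the main obstacle,'' and it is not repairable along the lines you sketch. An automorphism $\alpha_{ij}\in\Aut(B)$ is \emph{not} an adjointable operator on the right Hilbert $B$-module $B$: adjointable operators $T\in\mathcal{L}(B)$ are $B$-linear, i.e.\ $T(bc)=T(b)c$, which forces $T$ to be left multiplication by a multiplier; an automorphism satisfies $\alpha(bc)=\alpha(b)\alpha(c)$ instead and is $B$-linear only when it is inner. Consequently the expression $h_\ast=\sum_{i,j}\chi_i\chi_j\otimes\ast_{ij}\otimes e_{ij}$ does not define an element of $\mathcal{L}(\mathcal{E})$ for $\mathcal{E}=C(X,B)^{\oplus m}$, $h_\ast$ is not a projection in any algebra, and the corner identification $h_\ast\mathcal{L}(\mathcal{E})h_\ast\cong A_\ast$ has no meaning. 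The analogy with Proposition~\ref{near-cocycle} fails precisely because there the entries $v_{ij}$ lie in $U(V)\subset\mathcal{L}(V)$, whereas here they lie in $\Aut(B)$, which does not sit inside $\mathcal{L}(B)$.

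The paper's proof is of an entirely different nature and explains both the nuclearity hypothesis and the constant $10^{-11}$. One embeds $A_\alpha$ and $A_\beta$ as $C(X)$-subalgebras of $D=\bigoplus_i C_b(U_i,B)$, and then into a von Neumann algebra $M=\bigoplus_i\ell^\infty(Y_i,N)$ with $N=B''$ and $Y_i$ a countable dense subset of $U_i$. Using completely positive averaging maps $\Phi_\alpha,\Phi_\beta:D\to D$ built from the partition of unity, one shows that the Hausdorff distance between the unit balls of $A_\alpha$ and $A_\beta$ inside $D$ is at most $\mathrm{dist}(\alpha,\beta)$, and that both algebras are nuclear. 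The conclusion then follows from the perturbation theorem of Christensen--Sinclair--Smith--White--Winter \cite{StuartW}, which says that two nuclear $C^*$-subalgebras of a von Neumann algebra whose unit balls are within $10^{-11}$ in Hausdorff distance are conjugate by a unitary in the generated bicommutant; $C(X)$-linearity of the resulting isomorphism is then checked directly. So nuclearity is used to invoke \cite{StuartW}, not for tensor-product considerations, and $10^{-11}$ is the constant from that theorem, not a byproduct of functional calculus.
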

\begin{proof}
  {Let $B$ act faithfully on a separable Hilbert $H$ space such that $BH$ is dense in $H$,  and let $N$ be the von Neumann algebra $N=B''\subset B(H)$. 	 
	Let $Y$ be a countable set dense in $X$ and set $Y_i=U_i\cap Y.$
	Consider the von Neumann algebra
	\[M=\bigoplus_{i\in I} \ell^\infty(Y_i,N)=\bigoplus_{i\in I}M_i\subset B(K)\] acting on the separable Hilbert space  $K=\bigoplus_{i\in I}\ell^2(Y_i)\otimes H$. Let $D$ be the $C^*$-algebra
	\[D=\bigoplus_{i\in I} C_b(U_i,B)=\bigoplus_{i\in I}D_i\] and let
	$j:D \to M$ be the canonical embedding map induced componentwise by restriction of functions and the inclusion $B\subseteq N$. We view both $D$ and $M$ as $C(X)$-C$^*$-algebras. If $f\in C(X),$ $d\in D$ and $m\in M$ then, $(fd)_i=f|_{U_i}d_i$ and 
	$(fm)_i=f|_{Y_i}m_i.$
	Then $j$ is a $C(X)$-linear map.
	
	Let $(\chi_i)_{i\in I}$ be a partition of unity subordinated to the cover \( (U_i)_{i \in I} \).
	Define a  map $\Phi_\alpha: D \to D,$ where for $d=(d_i)\in D,$ the components of $\Phi_\alpha(d)$ are given by 
	\[\Phi_\alpha(d)_i(x)=\sum_{j\in I} \chi_j(x)\alpha_{ij}(x)(d_j(x)),\]
	for all $x\in U_i.$ One verifies immediately that $\Phi_\alpha$ is a contractive completely positive linear map.
	Moreover, if $\beta=(\beta_{ij})$ is another 1-cocycle, then 
	\[\|\Phi_\alpha-\Phi_\beta\|\leq \mathrm{dist}(\alpha,\beta):=\max_{i,j\in I}\sup_{x\in U_i\cap U_j}\|\alpha_{ij}(x)-\beta_{ij}(x)\|\]
	Let $A_\alpha$ and $A_\beta$ be the $C(X)$-C$^*$-subalgebras of $D$ defined by 
	\[A_\alpha=\{(d_i)_{i\in I}: d_i(x)=\alpha_{ij}(x)(d_j(x)),\,\, \forall x \in U_i\cap U_j\},\]
	\[A_\beta=\{(d_i)_{i\in I}: d_i(x)=\beta_{ij}(x)(d_j(x)),\,\, \forall x \in U_i\cap U_j\}.\]
	Let us note $\Phi_\alpha(d)=d$ for all $d\in A_\alpha$. Indeed if $d\in A_\alpha$, and $x\in U_i$, then
	\[\Phi_\alpha(d)_i(x)=\sum_{j\in I} \chi_j(x)\alpha_{ij}(x)(d_j(x))=\sum_{j\in I} \chi_j(x)d_i(x)=d_i(x).\]
	By the same argument, the elements of $A_\beta$ are fixed by $\Phi_\beta$. In particular both C$^*$-algebras $A_\alpha$ and $A_\beta$ are nuclear since $D$ is nuclear and the maps $\Phi_\alpha $ and $\Phi_\beta $ are contractive and completely positive.
	Let $a\in A$ and $b\in B$ with $\|a\|,\|b\|\leq 1$. Then
	\[\|a-\Phi_\beta(a)\|=\|\Phi_\alpha(a)-\Phi_\beta(a)\|\leq \mathrm{dist}(\alpha,\beta), \quad \|\Phi_\alpha(b)-b\|\leq \mathrm{dist}(\alpha,\beta). \] 
	Thus the Hausdorff distance between the unit balls of $A_\alpha$ and $A_\beta$ is at most $\mathrm{dist}(\alpha,\beta)$. If $\mathrm{dist}(\alpha,\beta)<\delta:=10^{-11},$ then by Theorem B from \cite{StuartW}, there is a unitary $u\in j(A_\alpha\cup A_\beta)''\subset M''=M$ such that
	$uj(A_\alpha)u^*=j(A_\beta)$.  In other words, there is a $*$-isomorphism $\theta:A_\alpha \to A_\beta$ and a commutative diagram:}
\[
\begin{tikzcd}
	A_\alpha\arrow[r, "j"] \arrow[d,"\theta" swap] &M \arrow[d, "\mathrm{Ad}_u"]\\
	A_\beta\arrow[r, "j"] &M
\end{tikzcd}
\] Since the $*$-homomorphisms $j$ and $\mathrm{Ad}_u$ are $C(X)$-linear and $j$ is an injective map, it follows that $\theta$ is $C(X)$-linear.
\end{proof}
{\begin{theorem}\label{thm:c12} Let $\G$ be a countable discrete group and let $Y\subseteq B\G$ be a finite subcomplex.
        {There are a finite set $S\subset \G$ and $\delta>0$  such that for any   projective representation $\rho:\G \to U(n),$  $\rho(a)\rho(b)\rho(ab)^{-1}=\lambda(a,b)1_n,$ with $\lambda(a,b)\in \mathbb{T}$ for all $a,b\in \G$, such that $|\lambda(a,b)-1|<\delta,$ for all $a,b\in S$,}
  	we have 
    $$c_k(E_\rho^Y)=\frac{1}{n^k}{n\choose k}[\omega]^k\quad \text{and} \quad 
  \ch(E^Y_\rho)=n \, e^{{ \frac{1}{n}[\omega]}}\in H^{\text{even}}(Y,\R),$$
  where $\omega\in Z^2(\G,\R)_Y$ is a local 2-cocycle defined  by $\omega(a,b):=\frac{1}{2\pi i}\Tr(\log(\lambda(a,b))).$
  \end{theorem}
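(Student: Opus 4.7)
The plan is to deduce the theorem by combining three ingredients from the paper: (i) Theorem~\ref{thm:c1}, which identifies the first Chern class as $c_1(E_\rho^Y)=[\omega]$; (ii) Proposition~\ref{prop:projB}, which computes all higher Chern classes and the Chern character of any vector bundle whose projective bundle is flat; and (iii) Propositions~\ref{near-cocycle} and~\ref{bundles}, which together allow one to replace an approximate cocycle by a genuine one up to bundle isomorphism. The entire content of the proof reduces to verifying that the projective bundle $P(E_\rho^Y)$ is isomorphic to a flat $PU(n)$-bundle; once this is established, Proposition~\ref{prop:projB} yields $c_k(E_\rho^Y)=\frac{1}{n^k}{n\choose k}[\omega]^k$ and $\ch(E_\rho^Y)=n\,e^{[\omega]/n}$ after substituting $c_1(E_\rho^Y)=[\omega]$.

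I would begin with the setup of Definition~\ref{def:pushf}: the Mishchenko line bundle on $Y$ is encoded by a $\G$-valued 1-cocycle $s_{ij}:U_i\cap U_j\to\G$ that is constant on each connected intersection. Provided $S$ contains all the $s_{ij}$ and $\delta$ is sufficiently small, $E_\rho^Y$ is well defined, and by Proposition~\ref{near-cocycle} its transition functions $u_{ij}$ satisfy $\|u_{ij}-\rho(s_{ij})\|\leqslant 66\,m^2\delta$ on each $U_i\cap U_j$, where $m=|I|$. Since $\rho$ is projective with cocycle $\lambda$ and $s_{ij}s_{jk}=s_{ik}$, we have
\[\rho(s_{ij})\rho(s_{jk})=\lambda(s_{ij},s_{jk})\,\rho(s_{ik}),\]
so the family $\mathrm{Ad}_{\rho(s_{ij})}$, viewed as a cocycle in $\Aut(M_n(\C))=PU(n)$, is a locally constant honest cocycle and thus defines a flat $PU(n)$-bundle $F$ on $Y$. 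The distance between the $\Aut(M_n(\C))$-cocycles $\mathrm{Ad}_{u_{ij}}$ and $\mathrm{Ad}_{\rho(s_{ij})}$ is controlled by $\|\mathrm{Ad}_{u_{ij}}-\mathrm{Ad}_{\rho(s_{ij})}\|\leqslant 2\|u_{ij}-\rho(s_{ij})\|\leqslant 132\,m^2\delta$. Shrinking $\delta$ so that this falls below the $10^{-11}$ threshold of Proposition~\ref{bundles} and applying that proposition with the separable nuclear algebra $B=M_n(\C)$ yields an isomorphism $P(E_\rho^Y)\cong F$ of principal $PU(n)$-bundles, establishing flatness.

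With flatness in hand, Proposition~\ref{prop:projB} gives the desired formulas and the theorem follows. The main obstacle is precisely this step of converting the heuristic observation ``$\pi\circ\rho\circ s_{ij}$ is an exact locally constant cocycle, hence defines a flat bundle'' into an honest isomorphism comparing this bundle with $P(E_\rho^Y)$, whose transition data is not a priori locally constant. This is why both perturbation results are needed: Proposition~\ref{near-cocycle} first rectifies the approximately multiplicative transition data $\rho(s_{ij})$ into a genuine $U(n)$-cocycle $u_{ij}$, and Proposition~\ref{bundles} then transfers the resulting $PU(n)$-bundle across to the manifestly flat bundle $F$ determined by $\mathrm{Ad}_{\rho(s_{ij})}$. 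Quantitatively, one must choose $\delta$ satisfying all three thresholds---the one governing the definition of $E_\rho^Y$, the bound $\delta<1/(40m^2)$ of Proposition~\ref{near-cocycle}, and $\delta<10^{-11}/(132 m^2)$ for Proposition~\ref{bundles}---simultaneously, which is possible since $m=|I|$ depends only on the chosen cover of $Y$.
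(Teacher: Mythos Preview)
Your proposal is correct and follows essentially the same route as the paper: use Proposition~\ref{near-cocycle} to produce a genuine $U(n)$-cocycle $u_{ij}$ for $E_\rho^Y$ close to $\rho(s_{ij})$, observe that $\pi(\rho(s_{ij}))$ is an honest locally constant $PU(n)$-cocycle because $\rho$ is projective, invoke Proposition~\ref{bundles} to conclude $P(E_\rho^Y)$ is flat, and then apply Proposition~\ref{prop:projB} together with Theorem~\ref{thm:c1}. The only detail you elide, which the paper makes explicit, is that Proposition~\ref{near-cocycle} requires $v_{ij}=v_{ji}^*$, so one must first replace $\rho(s_{ij})$ by a small perturbation $v_{ij}$ with this symmetry (for a projective $\rho$ one has $\rho(s_{ji})=\overline{\lambda(s_{ij},s_{ji})}\,\rho(s_{ij})^*$, so the perturbation is of order~$\delta$).
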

}
\begin{proof} For all $i,j \in I$, choose unitaries $v_{ij}$ which are small perturbations of $\rho(s_{ij})$ and such that $v_{ij}=v_{ji}^{-1}$.
     Let $u_{ij}$ be the cocycle provided by
Proposition~\ref{near-cocycle}. This is the cocycle corresponding to $E_\rho^Y$.
Let $\pi:U(n)\to PU(n)$ be the canonical map.
Then the cocycle $\pi(u_{ij})$ is a small perturbation of the constant cocycle
$\pi(\rho(s_{ij}))$. It follows from Proposition~\ref{bundles} that
$P(E_\rho^Y)$ is isomorphic to the flat $PU(n)$-bundle constructed from the cocycle $\pi(\rho(s_{ij}))$, and hence $c_k(E_\rho^Y)=\frac{1}{n^k}{n\choose k}c_1(E_\rho^Y)^k$ and $\ch(E^Y_\rho)=n \, e^{{ \frac{1}{n}c_1(E^Y_\rho)}}\in H^{\text{even}}(Y,\R),$ by Proposition~\ref{prop:projB}.
As already shown in Theorem~\ref{thm:c1}, $c_1(E^Y_\rho)=[\omega].$
\end{proof}
\begin{corollary} \label{cor:proj-inv}
	If $\rho:\G \to U(n)$ is a projective representation with cocycle $\lambda(a,b)=e^{2\pi i\sigma(a,b)},$ for $\sigma\in Z^2(\G,\Q)$ and $|\sigma(a,b)|<\delta$ for $a,b\in S$, then $\omega(a,b)=n\sigma(a,b)$ for all $a,b\in S$, so that $c_1(E^Y_\rho)=n{i^*[\sigma]}$ and $\ch(E^Y_\rho)= ne^{{i^* [\sigma]}},$
    where $i^*:H^2(B\G,\Q)\to H^2(Y,\Q)$ is the map induced by the inclusion $i:Y \to B\G.$
	\end{corollary}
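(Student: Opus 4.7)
The proof plan is to reduce the corollary to Theorem~\ref{thm:c12} by a direct computation of the local $2$-cocycle $\omega_\rho$ in the special case where $\rho$ is a projective representation. The only real content is the identification $\omega(a,b)=n\sigma(a,b)$; everything else is substitution into the formulas already established.

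First I would compute $\omega(a,b)$ directly from Definition~\ref{localCocycle}. Since $\rho$ is projective with $\rho(a)\rho(b)\rho(ab)^{-1}=\lambda(a,b)1_n=e^{2\pi i\sigma(a,b)}1_n$ and $|\sigma(a,b)|<\delta$ for $a,b\in S$ (where $\delta$ is taken small enough that $e^{2\pi i\sigma(a,b)}$ lies in the domain of the principal branch of $\log$ used to define $\omega$), we have $\log(e^{2\pi i\sigma(a,b)}1_n)=2\pi i\sigma(a,b)\,1_n$. Taking trace gives
\[
\omega(a,b)=\frac{1}{2\pi i}\Tr\bigl(2\pi i\sigma(a,b)\,1_n\bigr)=n\sigma(a,b),\qquad a,b\in S.
\]
In particular $[\omega]=n\,i^*[\sigma]$ in $H^2(Y,\R)$, where $i^*:H^2(B\G,\Q)\to H^2(Y,\Q)$ is induced by inclusion.

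Next I would invoke Theorem~\ref{thm:c12}: shrinking $\delta$ further if necessary so that $|\lambda(a,b)-1|<\delta$ for $a,b\in S$, the hypotheses of that theorem are satisfied, and it yields $c_1(E^Y_\rho)=[\omega]$ together with $\operatorname{ch}(E^Y_\rho)=n\,e^{\frac{1}{n}[\omega]}$. Substituting $[\omega]=n\,i^*[\sigma]$ into both formulas gives
\[
c_1(E^Y_\rho)=n\,i^*[\sigma],\qquad \operatorname{ch}(E^Y_\rho)=n\,e^{\frac{1}{n}\cdot n\,i^*[\sigma]}=n\,e^{i^*[\sigma]},
\]
which is the claimed identity.

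There is essentially no obstacle here; the only subtlety is ensuring $\delta$ is small enough for the power series defining $\log$ to be valid on each $\lambda(a,b)1_n$ and for Theorem~\ref{thm:c12} to apply, both of which are automatic from $|\sigma(a,b)|<\delta$ with $\delta$ chosen sufficiently small.
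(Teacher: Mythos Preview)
Your proposal is correct and matches the paper's approach: the paper leaves the corollary without explicit proof since it is an immediate consequence of Theorem~\ref{thm:c12}, and the computation $\omega(a,b)=n\sigma(a,b)$ already appears verbatim at the end of the proof of Theorem~\ref{thm:c1}. Your write-up simply makes explicit the substitution $[\omega]=n\,i^*[\sigma]$ into the formulas of Theorem~\ref{thm:c12}, which is exactly what is intended.
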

\section{Classifying almost representations up to stable equivalence}\label{sec:classifi}
Let $B$ be a unital $C^*$-algebra and let $X$ be  a locally compact and $\sigma$-compact space Hausdorff space. We shall make use of various K-theory groups and follow the notation of Kasparov \cite{Kas:inv}.
The representable K-theory of $X$ with coefficients in $B$ is the group
\[RK^j(X;B):= RKK^j(X;\C,B)=\mathcal{R} KK^j(X;C_0(X),C_0(X)\otimes B).\]
If $X$ is compact, then $RK^j(X;B)= K^j(X;B)\cong KK(\C,C(X)\otimes B)\cong K_j(C(X)\otimes B).$ 
The K-homology groups of compact spaces, $K_j(X)=KK(C(X),\C),$ are extended to  non-compact spaces $X$ by defining 
\[RK_j(X)=\varinjlim K_j(Y),\]
where $Y$ runs through the compact subspaces of $X.$ 

Let $\K$ be the C*-algebra of compact operators on an infinite dimensional separable Hilbert space. 
    Let $e_{0}\in \K$ be a rank-one self-adjoint projection. We denote by $\Proj{B \otimes \K}$ the space of all self-adjoint projections in $B \otimes \K$ and by  $\uProj{B \otimes \K}$ its connected component that contains $1_B\otimes e_{0}$. Recall that $\uProj{B \otimes \K}$ is a model for $BU(B),$ the classifying space of the unitary group $U(B),$ see the proof of \cite[Cor.2.9]{DP1}.
Let us note that any continuous map $e:X \to \Proj{B \otimes \K}=\Proj{ \K(H_B)} $
gives rise to a $C_0(X)$-linear $*$-homomorphism $\varphi:C_0(X) \to \K(H_{C_0(X)\otimes B}),$   $\varphi(f)=f\cdot e,$ and hence it defines an element of $\mathcal{R} KK^0(X;C_0(X),C_0(X)\otimes B)=RK^0(X;B).$

For a countable discrete group $\G$, one can a choose a model for the classifying space $B\G$ which is locally compact and $\sigma$-compact,  and in fact a separable locally finite CW complex,
 see  \cite[p.192]{Kas:inv}.
Throughout this section we will considered a fixed  representation of $B\G$ as a countable increasing  union of finite CW complexes, 
$Y_1\subseteq Y_2\subseteq \cdots \subseteq Y_i \subseteq \cdots$ 
\begin{equation}\label{union}
    B\G=\bigcup_n Y_i.
\end{equation}

 The principal $\G$-bundle $E\G \to B\G$
 induces a canonical element of $RK^0(B\G,C^*(\G))$, which we now describe.
      Mishchenko's flat line bundle $L_\G$ is the canonical bundle
      $E\G\times_\G C^*(\G)\to B\G$ induced by the diagonal action of $\G$. Its fibers are isomorphic to the $C^*(\G)$-Hilbert module $C^*(\G)$. 
      If $\imath:\G \to U(C^*(\G))$ is the canonical inclusion, then $L_\G$ is classified by the map $B\imath: B\G \to BU(C^*(\G)).$
   One can describe the classifying map $e$  of $L_\G$ as follows. 
     By \cite[p.57]{Hus:fibre}, there is a locally finite countable open cover $(U_a)_{a\in I}$ of $B\G$ and positive continuous functions $(\chi_a)_{a\in I}$ such that $(\chi^2_a)_{a\in I}$ is a partition of unity subordinated to the cover and such that the covering space $E\G\to B\G$ is trivial on each open set $U_a$. We denote by $s_{ab}:U_a\cap U_b\to \G$ the corresponding locally constant cocycle. Then 
$L_\G$ is classified by the map 
\begin{equation}\label{class-map}
    e=\sum_{a,b\in I}  s_{ab} \otimes \chi_a\chi_b \otimes e_{ab},
\end{equation}
where $(e_{ab})$ are matrix units for $M_\infty(\C)\subset \K.$
Note that $$e:B\G \to  \Proj{\C[\G]\otimes M_\infty(\C)}\cap \uProj{C^*(\G) \otimes \K}.$$

   We denote by $e_Y$ the restriction of $e$ to a compact subspace of $B\G$ and by $e_i$ the restriction of $e$ to $Y_i.$ Note that
   \[e_i=\sum_{a,b\in I(i)}  s_{ab} \otimes \chi_a\chi_b \otimes e_{ab}\]
   for some finite subset $I(i)$ of $I.$
   We view $e_i$ as an element of $\Proj{ \C[\G]\otimes C(Y_i) \otimes M_{I(i)}(\C)}.$ 

{
Consider an asymptotic homomorphism consisting of a sequence of
   unital maps $\varphi_n:\Lambda \to U(k_n)$. This induces 
   a unital $*$-homomorphism }
   \begin{equation}
   \varphi:C^*(\Lambda)\to B=\prod_n M_{k_n}(\C)/\bigoplus_n M_{k_n}(\C).
   \end{equation}
   Let $\ell_\varphi$ denote the bundle on $B\Lambda$ classified by the map
  \begin{tikzcd}
  	B\Lambda\ar[r,"B\varphi"] & BU(B)=\uProj{B \otimes \K}.
  \end{tikzcd} 
  If $f:\Gamma\rightarrow\Lambda$ is a group homomorphism, and $Bf:B\G\to B\Lambda$, then 
  by basic properties of classifying spaces, 
  \begin{equation}\label{functoriality}
   (Bf)^* (\ell_\varphi)\cong \ell_{\varphi\circ f}. 
  \end{equation}
  For each compact subspace $Z$ of $B\Lambda$, by functional calculus we lift the restriction of $B\varphi$ to $Z$ to a map $p:Z \to \uProj{\left(\prod_n M_{k_n}\right) \otimes \K}$. If we denote the components of $p$ by $(p_n)$ 
  it follows from the definition of $E_{\varphi_n}^Z$ that there is $k$ such that the bundle $E_{\varphi_n}^Z$ is given by the projection $p_n$ for all $n\geq k$. Using equation ~\eqref{functoriality}, and the functoriality of the functional calculus, it follows that for any compact subset of $B\G$, there is $k\in \N$ such that 
  \[[E_{\varphi_n\circ f}^Y]= [f^*(E_{\varphi_n}^{f(Y)}]\in K^0(Y),\quad \text{for all } n\geq k.\]
  We restate this property in the following equivalent form.
\begin{proposition}\label{prop:pullback1}
Suppose that $f:\Gamma\rightarrow\Lambda$ is a group homomorphism, and $Y\subseteq B\Gamma$ is a finite sub-complex. Then there exist a finite subset $S\subset \Lambda$ and $\ep>0$ such that for any  $(S,\delta)$-representation $\rho:\Lambda\rightarrow U(n),$ $[E_{\rho\circ f}^Y]=[f^*(E_{\rho}^{f(Y)})]$ in $K^0(Y)$.
\end{proposition}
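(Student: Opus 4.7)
The plan is to argue by contradiction, reducing a failure of the $(S,\delta)$-statement to a failure of the sequential assertion already established in the paragraph immediately preceding the proposition. The work has essentially been done there; what is left is the standard conversion from a sequential shadow to a uniform $(S,\delta)$-statement.

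Assume the conclusion fails. Fix an exhaustion $\Lambda = \bigcup_n S_n$ by finite sets and a sequence $\delta_n \downarrow 0$; then for each $n$ we may select an $(S_n,\delta_n)$-representation $\rho_n : \Lambda \to U(k_n)$ with $[E_{\rho_n \circ f}^Y] \neq [f^*(E_{\rho_n}^{f(Y)})]$ in $K^0(Y)$. The sequence $(\rho_n)_n$ is then an asymptotic representation of $\Lambda$, so it induces a unital $*$-homomorphism $\varphi : C^*(\Lambda) \to B = \prod_n M_{k_n}(\C)/\bigoplus_n M_{k_n}(\C)$, placing us exactly in the setting treated above the statement.

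Now apply that discussion. Restricting the classifying map $B\varphi : B\Lambda \to BU(B)$ to the compact set $f(Y)\subset B\Lambda$ and lifting via functional calculus yields a continuous map $p : f(Y) \to \uProj{(\prod_n M_{k_n}(\C))\otimes \K}$ whose $n$th component represents $E_{\rho_n}^{f(Y)}$ for $n$ sufficiently large. By \eqref{functoriality} and the naturality of the functional-calculus lift under the continuous map $Bf|_Y : Y \to f(Y)$, the composition $p \circ Bf|_Y$ lifts $B(\varphi \circ f)|_Y$, and its $n$th component represents both $E_{\rho_n \circ f}^Y$ and $f^*(E_{\rho_n}^{f(Y)})$ in $K^0(Y)$ for all sufficiently large $n$. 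Hence $[E_{\rho_n \circ f}^Y] = [f^*(E_{\rho_n}^{f(Y)})]$ eventually, contradicting the construction of $(\rho_n)_n$.

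There is no genuine obstacle: the substantive content, namely the natural isomorphism $\ell_{\varphi\circ f} \cong (Bf)^*\ell_\varphi$ of \eqref{functoriality} combined with the componentwise compatibility of the functional-calculus lift, is already packaged in the preceding discussion. The only point one should verify explicitly is that the pointwise functional calculus does commute with pullback along $Bf|_Y$, which is immediate since the lift is constructed by applying a fixed holomorphic function to a continuous family of self-adjoint elements of $(\prod_n M_{k_n}(\C))\otimes \K$.
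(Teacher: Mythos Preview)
Your proof is correct and follows exactly the route the paper intends: the paper simply declares the proposition to be ``this property in the following equivalent form,'' referring to the sequential statement established in the preceding paragraph, and you have spelled out the standard contradiction argument that converts that sequential fact into the uniform $(S,\delta)$-version.
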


The element of $RK(B\G;C^*(\G))$ corresponding to $e$ is denoted by $e_\G.$ Kasparov uses the product 
\[RK(B\G;C^*(\G))\times KK^j(C^*(\G),B)\to RK^j(B\G;B) \] to define the co-assembly map
\[\nu:KK^j(C^*(\G),B)\to RK^j(B\G;B) \]
as the cap product with $e_\G.$
If $Y\subseteq B\G$ is compact, we denote by $\nu_Y$ the composite map
\begin{equation}\label{nuy}
\begin{tikzcd}
  	\nu_Y:KK^j(C^*(\G),B)\ar[r,"\nu"] & RK^j(B\G;B)\ar[r] & RK^j(Y;B)
  \end{tikzcd} 
\end{equation}
For each compact subspace $Y\subseteq B\G$, there is a map $\mu_Y:K_j(Y)\to K_j(C^*(\G))$ defined as the cap product with $[e_Y]\in K^0(Y;C^*(\G))$,  the restriction of $e_\G$ to $Y$. The corresponding inductive limit homomorphism is the (full) assembly map
\[\mu:K_j(B\G)\to K_j(C^*(\G)).\]
The associativity of the Kasparov product, shows that $\nu$ and $\mu$ are linked by a duality relation:
\begin{equation}\label{dualitate}
\nu_Y(x)\otimes_{C(Y)}z=x \otimes_{C^*(\G)}\mu(z),\quad x \in K_j(C^*(\G)),\,\, z \in K_j(Y),
\end{equation}
(see \cite[Lemma 6.2]{Kas:inv}).
In the sequel we shall use the notation $\nu_i=\nu_{Y_i}$ and $\mu_i=\mu_{Y_i}$.
\begin{lemma}\label{cddd}
    The following diagram is commutative
\begin{equation}\label{diag-0}
		\begin{tikzcd}
KK^j(C^*(\G),B) \arrow[r, "g", two heads] \arrow[d, "\nu"] & \Hom(K_*(C^*(\G)),K_*(B)) \ \arrow[d, "\mu^*"] & {} 
		\\
RK^j(B\G;B)\arrow[d, "\beta"] \arrow[r, "g'", two heads] & \Hom(K_*(B\G),K_*(B))\arrow[d, "\alpha", tail, two heads]  &
		\\
		 \varprojlim K^j(Y_i;B) \arrow[r, "g''", two heads] & \varprojlim \Hom(K_*(Y_i),K_*(B))  & 
	\end{tikzcd}	
	\end{equation}
The maps $g,g',g''$ are induced by the natural pairings. The maps $\beta, \alpha$ are induced by inclusions $Y_i\subset B\G$. The $\Hom$--groups are graded according to the parity of $j$.
\end{lemma}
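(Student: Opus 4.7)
The diagram has two squares, and the plan is to handle them separately: the top square is essentially a reformulation of the duality \eqref{dualitate} between assembly and co-assembly, while the bottom square is naturality of the $RK$-theory / $RK$-homology pairing with respect to the inclusions $Y_i \hookrightarrow B\G$. All three horizontal arrows are given by Kasparov products, so in both cases the identities to be checked follow from associativity/naturality of the Kasparov product.

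For the top square, fix $x \in KK^j(C^*(\G),B)$ and an arbitrary $z \in K_*(B\G)$. Because $B\G = \bigcup_i Y_i$ with $Y_i$ compact, we have $K_*(B\G) = \varinjlim_i K_*(Y_i)$, so $z$ comes from some $z_i \in K_*(Y_i)$ via the inclusion $\iota_i\colon Y_i \hookrightarrow B\G$. Unwinding the definitions,
\[
\mu^*(g(x))(z) \;=\; x \otimes_{C^*(\G)} \mu(z) \;=\; x \otimes_{C^*(\G)} \mu_i(z_i),
\]
while the representable pairing $g'(\nu(x))(z)$ is computed at finite level by first restricting $\nu(x)$ to $Y_i$, giving $\nu_i(x) \otimes_{C(Y_i)} z_i$. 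The duality relation \eqref{dualitate} says precisely that $\nu_i(x)\otimes_{C(Y_i)} z_i = x \otimes_{C^*(\G)} \mu_i(z_i)$, which is what we needed.

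For the bottom square, fix $y \in RK^j(B\G;B)$ and a compatible family $(z_i)_i \in \varprojlim_i K_*(Y_i)$. At level $i$, the two compositions evaluate to
\[
g''(\beta(y))_i(z_i) \;=\; \beta(y)_i \otimes_{C(Y_i)} z_i \;=\; \iota_i^{*}(y)\otimes_{C(Y_i)} z_i,
\]
and
\[
\alpha(g'(y))_i(z_i) \;=\; g'(y)\bigl((\iota_i)_*(z_i)\bigr) \;=\; y \otimes_{C_0(B\G)} (\iota_i)_*(z_i).
\]
These agree by the projection formula for the Kasparov product along the proper map $\iota_i\colon Y_i\hookrightarrow B\G$, which is the standard naturality statement $\iota_i^*(y)\otimes_{C(Y_i)} z_i = y \otimes_{C_0(B\G)}(\iota_i)_*(z_i)$ for $RKK$-classes; see \cite[\S2.19]{Kas:inv}. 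Taking the inverse limit over $i$ yields commutativity of the bottom square.

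The main obstacle, such as it is, lies in keeping the bookkeeping of Kasparov products straight, particularly ensuring that the $C_0(B\G)$-linear structure of $y$ is compatible with the pushforward $(\iota_i)_*$ under the non-compact $B\G$ (this is why the locally finite, $\sigma$-compact model is chosen in \eqref{union}). Once the compatibility between the inductive description $K_*(B\G)=\varinjlim K_*(Y_i)$, the projective description $RK^j(B\G;B)=\varprojlim K^j(Y_i;B)$, and the duality at each finite stage is recorded, both squares reduce to standard associativity/naturality of the Kasparov product, with no further input needed beyond \eqref{dualitate}.
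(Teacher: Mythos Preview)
Your proof is correct and reaches the same conclusion, but the organization differs from the paper's. You prove the top square \emph{directly}: given $z=(\iota_i)_*(z_i)$ you compute $g'(\nu(x))(z)$ by restricting $\nu(x)$ to $Y_i$ and then invoke the duality~\eqref{dualitate}. The paper instead first establishes the bottom square $g''\circ\beta=\alpha\circ g'$ by naturality, then observes that~\eqref{dualitate} yields commutativity of the \emph{outer rectangle} $g''\circ\beta\circ\nu=\alpha\circ\mu^*\circ g$, and finally deduces the top square $g'\circ\nu=\mu^*\circ g$ by cancelling the bijective map $\alpha$ (the isomorphism $\Hom(\varinjlim K_*(Y_i),K_*(B))\cong\varprojlim\Hom(K_*(Y_i),K_*(B))$).

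The two arguments are essentially equivalent: your key step ``$g'(\nu(x))(z)$ is computed at finite level by restricting to $Y_i$'' is precisely the content of the bottom square applied to $y=\nu(x)$, so you are using the same naturality twice rather than once plus a cancellation. Your route has the minor advantage of never needing to articulate that $\alpha$ is bijective; the paper's route makes the logical dependence cleaner by isolating the naturality statement once. One small wording issue: in your bottom-square argument you fix ``a compatible family $(z_i)_i\in\varprojlim K_*(Y_i)$'', but to verify equality in $\varprojlim\Hom(K_*(Y_i),K_*(B))$ one should check at each level $i$ on \emph{all} $z_i\in K_*(Y_i)$, not just compatible families---your subsequent computation does this correctly, so this is only a phrasing slip.
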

\begin{proof}
    The bottom diagram is commutative by naturality of pairings in topological setting, thus $g''\circ \beta=\alpha\circ g'$.
By equation~\eqref{dualitate}, 
 $g''\circ \beta \circ \nu=\alpha \circ \mu^* \circ g.$ It follows that
 \[\alpha\circ g' \circ \nu=\alpha\circ \mu^* \circ g.\]
Since the map $\alpha$ is bijective as it becomes apparent once we describe it as
        \[\Hom(K_*(B\G),K_*(B))=\Hom(\varinjlim K_*(Y_i),K_*(B))\to \varprojlim \Hom(K_*(Y_i),K_*(B))\] we deduce that $g'  \circ \nu=\mu^* \circ g.$
\end{proof}

\begin{proposition}\label{comm-diagss}
Suppose that $C^*(\G)$ satisfies the UCT. For example $\G$ is a countable amenable group.
       Then there is a commutative diagram with exact rows. 
        \begin{equation}\label{diag-1}
		\begin{tikzcd}
		\Ext(K_*(C^*(\G),K_*(B)) \arrow[r, "f", tail] \arrow[d, "\mu_{ext}^*"] & KK^j(C^*(\G),B) \arrow[r, "g", two heads] \arrow[d, "\nu"] & \Hom(K_*(C^*(\G)),K_*(B)) \ \arrow[d, "\mu^*"] & {} 
		\\
		\Ext(K_*(B\G),K_*(B)) \arrow[r, "f'", tail]\arrow[d, "\alpha_{ext}"] &  RK^j(B\G;B)\arrow[d, "\beta"] \arrow[r, "g'", two heads] & \Hom(K_*(B\G),K_*(B))\arrow[d, "\alpha", tail, two heads]  &
		\\
		\varprojlim \Ext(K_*(Y_i),K_*(B)) \arrow[r, "f''", tail] &  \varprojlim K^j(Y_i;B) \arrow[r, "g''", two heads] & \varprojlim \Hom(K_*(Y_i),K_*(B))  & 
	\end{tikzcd}	
	\end{equation}
    The $\Hom$--groups and  the $\Ext$--groups are graded in accord to the parity of $j$.
    \end{proposition}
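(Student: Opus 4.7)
The plan is to build the diagram row by row, starting with the standard UCT in the top row, deriving the bottom row as an inverse limit of UCT sequences associated to the finite subcomplexes $Y_i$, and then obtaining the middle row by identifying $RK^j(B\G;B)$ with the inverse limit $\varprojlim K^j(Y_i;B)$ via the map $\beta$.

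First, I would invoke the UCT for $C^*(\G)$, which is available by hypothesis and in particular holds whenever $\G$ is countable amenable (so $C^*(\G)=C^*_r(\G)$ is nuclear, hence in the bootstrap class). This directly produces the top short exact sequence
\[0\to \Ext(K_*(C^*(\G)),K_*(B))\xrightarrow{f} KK^j(C^*(\G),B)\xrightarrow{g}\Hom(K_*(C^*(\G)),K_*(B))\to 0,\]
graded by the parity of $j$. Next, for each $i$ the finite CW complex $Y_i$ has commutative $C^*$-algebra $C(Y_i)$, which lies in the bootstrap class, so the UCT gives a short exact sequence
\[0\to \Ext(K_*(Y_i),K_*(B))\to KK^j(C(Y_i),B)\to \Hom(K_*(Y_i),K_*(B))\to 0,\]
and $KK^j(C(Y_i),B)\cong K^j(Y_i;B)$. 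These sequences are natural in $Y_i$ under the restriction maps $Y_{i}\subseteq Y_{i+1}$.

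Second, I would pass to inverse limits to produce the bottom row. The key point is to identify $\varprojlim \Hom(K_*(Y_i),K_*(B))$ with $\Hom(\varinjlim K_*(Y_i),K_*(B))=\Hom(K_*(B\G),K_*(B))$, giving the map $\alpha$ (which is bijective as noted in the proof of Lemma~\ref{cddd}), and to handle the $\varprojlim^1$ obstruction. Exactness of the bottom row requires that $\{\Ext(K_*(Y_i),K_*(B))\}_i$ satisfies Mittag–Leffler, so that $\varprojlim^1=0$ and $\varprojlim$ preserves exactness; this is the step I expect to be the main technical obstacle. Under a Mittag–Leffler condition (or, equivalently, upon verifying that the transition maps in the $\Ext$-system are eventually the same), $\varprojlim$ is exact on the UCT sequences for $Y_i$ and the bottom row emerges, with $\varprojlim\Ext(K_*(Y_i),K_*(B))$ on the left.

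Third, I would handle the middle row. Since $B\G$ is represented as a locally finite CW complex with compact exhaustion $\{Y_i\}$, the representable K-theory is $RK^j(B\G;B)=\varprojlim K^j(Y_i;B)$, so the map $\beta$ is an isomorphism (again up to a $\varprojlim^1$ obstruction on $K^{j-1}(Y_i;B)$, which I would argue vanishes by the same Mittag–Leffler analysis). Under this identification, the middle row becomes exactly the bottom row, with $f',g'$ induced by $f'',g''$. The maps $\Ext(K_*(B\G),K_*(B))\to \varprojlim \Ext(K_*(Y_i),K_*(B))$ and $\Hom(K_*(B\G),K_*(B))\to \varprojlim \Hom(K_*(Y_i),K_*(B))$ are respectively the comparison map $\alpha_{ext}$ and the isomorphism $\alpha$.

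Finally, commutativity of the diagram is a naturality check. The lower right square is exactly Lemma~\ref{cddd}. The lower left square follows by running Lemma~\ref{cddd} on the $\Ext$ side, using that the UCT splits naturally on kernels: $\nu$ intertwines $f$ with $f'$ because the coassembly map preserves the universal-coefficient filtration, and restriction to $Y_i$ followed by $f''$ agrees with $f'$ by construction. The upper squares are the naturality of the top UCT sequence under the $*$-homomorphism dual to $\mu$ (since the top row is natural in the first variable of $KK$, and the vertical map $\nu$ is precisely the Kasparov product with $[e_\G]$, which respects the UCT splitting).
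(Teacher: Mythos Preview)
Your approach has two genuine gaps.

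First, you claim that $\beta:RK^j(B\G;B)\to\varprojlim K^j(Y_i;B)$ is an isomorphism, and you use this to \emph{define} the middle row as a copy of the bottom row. But $\beta$ is not an isomorphism in general: there is a Milnor $\varprojlim^1$ sequence
\[0\to \textstyle\varprojlim^1 K^{j-1}(Y_i;B)\to RK^j(B\G;B)\xrightarrow{\beta}\varprojlim K^j(Y_i;B)\to 0,\]
and there is no reason for the $\varprojlim^1$ term to vanish. Your proposed ``same Mittag--Leffler analysis'' does not transfer: the vanishing $\varprojlim^1\Ext(K_*(Y_i),K_*(B))=0$ (which comes from the fact that the $K_*(Y_i)$ are finitely generated, via \cite[Thm.~6.2]{Schochet-primer}) says nothing about the system $K^{j-1}(Y_i;B)$. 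In the paper, the middle row is instead obtained directly as a UCT for representable K-theory, namely \cite[Lemma~3.4]{Kasparov-Skandalis-kk}, independently of the bottom row.

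Second, your argument for commutativity of the upper-left square is not an argument. The map $\nu$ is a Kasparov product with $e_\G$, not a map induced by a $*$-homomorphism in the first variable, so ``naturality of the UCT in the first variable'' does not apply, and the UCT does not split naturally. What you do have from Lemma~\ref{cddd} is $g'\circ\nu=\mu^*\circ g$; the remaining identity $f'\circ\mu^*_{ext}=\nu\circ f$ must be proved separately. The paper does this by choosing a geometric injective resolution $0\to A\xrightarrow{h} D\to SB\to 0$ of $B$, so that on $A$ and $D$ the maps $g_A,g_D,g'_A,g'_D$ become isomorphisms, and then identifying both $\Ext$-groups as cokernels of $h_*$ on the $\Hom$-level; the duality \eqref{dualii} applied with coefficients $A$ and $D$ then shows that the induced map on cokernels is exactly $\mu^*_{ext}$.
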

		\begin{proof} The maps $g,g',g''$ are induced by the natural pairings. The maps $\alpha_{ext},\beta, \alpha$ are induced by inclusions $Y_i\subset B\G$.
        The first row is the UCT of \cite{RosSho:UCT} which holds for amenable groups by \cite{HigKas:BC} and \cite{Tu:BC}. 
        The second row is exact as it represents
        the universal coefficient theorem expressed by Lemma 3.4 of \cite{Kasparov-Skandalis-kk}.
        The exactness of the third row follows from the same lemma and the property that if $G$ is the injective limit of a system $(G_i)$ of finitely generated groups, then  $\varprojlim^1 \text{Ext}(G_i, H)=0$ by \cite[Thm. 6.2]{Schochet-primer}.
    The bottom diagram is commutative by the naturality of the universal coefficient theorem expressed by Lemma 3.4 of \cite{Kasparov-Skandalis-kk}.
We have seen in Lemma~\ref{cddd} that
        the map $\alpha$ is  injective and that
       \begin{equation}\label{dualii}
      g'  \circ \nu=\mu^* \circ g.  
    \end{equation}
  It remains to show that
    \[f'\circ \mu^*_{ext}=\nu \circ f.\]
    For this we will need to use the naturality of \eqref{dualii} while revisiting the proofs of the two universal coefficient theorems represented by the top two rows.

    Consider a geometric injective resolution of $B$ as in \cite{RosSho:UCT}. 
\[
	\begin{tikzcd}
		0 \arrow[r] & A \arrow[r, "h"] & D \arrow[r, ] & SB \arrow[r] & 0
	\end{tikzcd}
\]
    The associated K-theory sequences give injective resolutions for $K_j(B):$ 
\[
	\begin{tikzcd}
		0 \arrow[r] & K_{j+1}(SB) \arrow[r] & K_{j}(A)\arrow[r, "h_*"] & K_{j}(D)\arrow[r] & 0
	\end{tikzcd}
\]
Consider the commutative diagrams that appear in the proof of the UCTs: 
\[
	\begin{tikzcd}
		\Hom(K_*(C^*(\G)),K_*(A)) \arrow[r,"h_*"]& \Hom(K_*(C^*(\G)),K_*(D)) \arrow[r, " "]  & KK^{j+1}(C^*(\G),B) 
        \\
		KK^j(C^*(\G),A)\arrow[u, "g_A\, \cong"]\arrow[d, "\nu_A"] \arrow[r,"h_*"] & KK^j(C^*(\G),D)\arrow[u, "g_D\, \cong" ]\arrow[r, " "] \arrow[d, "\nu_D"] & KK^{j+1}(C^*(\G),B)\arrow[u, "="] \arrow[d, "\nu_B"]
        \\
		RK^j(B\G,A)\arrow[d, "g_A'\, \cong"] \arrow[r,"h_*"] & RK^j(B\G,D) \arrow[r] \arrow[d, "g_D'\, \cong"] & RK^{j+1}(B\G,B) \arrow[d, "\beta'"] 
        \\
		\Hom(K_*(B\G),K_*(A))\arrow[r,"h_*"] & \Hom(K_*(B\G),K_*(D)) \arrow[r, " "] & \Hom(K_*(B\G),K_*(B)) 
	\end{tikzcd}
\]
By \eqref{dualii}, $\mu_A^*\circ g_A=g_A'\circ \nu_A$ and $\mu_D^*\circ g_D=g_D'\circ \nu_D$.
It follows that
\[\coker\left(h_*:\Hom(K_*(C^*(\G)),K_*(A)) \to \Hom(K_*(C^*(\G)),K_*(D))\right)=\Ext(K_*(C^*(\G),K_*(B)))\]
identifies with
\[\coker\left(h_*:\Hom(K_*(B\G),K_*(A)) \to \Hom(K_*(B\G),K_*(D))\right)=\Ext(K_*(B\G),K_*(B)))\]	
via a map which is exactly $\mu_{ext}^*$.
\end{proof}
 	{Following Kasparov \cite{Kasparov-conspectus}, we use the notation $LK^*(B\G;B)=\varprojlim K^*(Y_i;B).$}
        \begin{proposition}\label{prop=amena}
        Let $\G$ be a countable torsion-free group which admits a $\gamma$-element equal to $1$ and such that $C^*(\G)$ satisfies the UCT.
		For example $\G$ is a countable torsion-free amenable group. Then there is an exact sequence 
       \[\begin{tikzcd}
  	0 \ar[r]& \mathrm{Pext}(K_*(C^*(\G)), K_*(B)) \ar[r, "f"] & KK^*(C^*(\G),B)
  	\ar[r, "\beta\circ \nu"] & LK^*(B\G;B) \ar[r]& 0
  \end{tikzcd}. \] 
	\end{proposition}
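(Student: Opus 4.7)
The plan is to extract the short exact sequence from the commutative diagram~\eqref{diag-1} of Proposition~\ref{comm-diagss} by exploiting that the $\gamma$-element equals $1$. Under this hypothesis, Kasparov's Dirac/dual-Dirac machinery (together with the Higson-Kasparov theorem in the amenable case) forces the full assembly map $\mu:K_*(B\G)\to K_*(C^*(\G))$ to be an isomorphism, and hence both $\mu^*$ and $\mu^*_{ext}$ in~\eqref{diag-1} are isomorphisms. Moreover, since $K_*(B\G)=\varinjlim K_*(Y_i)$, the map $\alpha:\Hom(K_*(B\G),K_*(B))\to \varprojlim\Hom(K_*(Y_i),K_*(B))$ is itself an isomorphism.

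I would then compare the first and third rows of~\eqref{diag-1} via the composed vertical maps $\alpha_{ext}\circ\mu^*_{ext}$, $\beta\circ\nu$, and $\alpha\circ\mu^*$; the last of these is an isomorphism by the previous paragraph. An application of the snake lemma to this ladder of short exact sequences yields
\[\ker(\beta\circ\nu)\cong \ker(\alpha_{ext}\circ\mu^*_{ext})\quad \text{and}\quad \coker(\beta\circ\nu)\cong \coker(\alpha_{ext}\circ\mu^*_{ext}).\]
To identify these, I would invoke Jensen's theorem: since each $K_*(Y_i)$ is finitely generated and $\varprojlim^1\Ext(K_*(Y_i),K_*(B))=0$ by~\cite[Thm.~6.2]{Schochet-primer}, the Milnor-type $\varprojlim^1$-exact sequence applied to $K_*(B\G)=\varinjlim K_*(Y_i)$ reads
\[0\to \mathrm{Pext}(K_*(B\G),K_*(B))\to \Ext(K_*(B\G),K_*(B))\xrightarrow{\alpha_{ext}}\varprojlim\Ext(K_*(Y_i),K_*(B))\to 0,\]
using the Jensen identification $\varprojlim{}^1\Hom(K_*(Y_i),K_*(B))\cong\mathrm{Pext}(K_*(B\G),K_*(B))$. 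Hence $\ker\alpha_{ext}=\mathrm{Pext}(K_*(B\G),K_*(B))$ and $\alpha_{ext}$ is surjective.

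Combining these facts with the observation that $\mu^*_{ext}$, being an isomorphism induced by the $\mu$-isomorphism in the contravariant variable, restricts to an isomorphism on Pext-subgroups (by naturality of Pext), one concludes that $\ker(\beta\circ\nu)=f(\mathrm{Pext}(K_*(C^*(\G)),K_*(B)))$ and $\coker(\beta\circ\nu)=0$, which is precisely the desired exact sequence. The main obstacle is organizational rather than conceptual: one must verify that the Jensen identification and the Pext naturality interact coherently with the assembly map $\mu$, so that $\mu^*_{ext}$ genuinely preserves the Pext-subgroups on both sides. Once this compatibility is in hand, everything else reduces to a formal diagram chase.
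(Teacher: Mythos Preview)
Your proposal is correct and follows essentially the same route as the paper. Both arguments use the commutative diagram~\eqref{diag-1}, the isomorphism of the assembly map $\mu$ when $\gamma=1$, a diagram chase (you invoke the snake lemma explicitly, the paper says ``nine lemma or diagram chase''), and the identification $\ker\alpha_{ext}=\mathrm{Pext}(K_*(B\G),K_*(B))$ via Schochet/Jensen together with naturality of $\mathrm{Pext}$ under $\mu^*_{ext}$; your write-up is slightly more explicit about surjectivity of $\beta\circ\nu$ via the cokernel computation, but the substance is identical.
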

    \begin{proof} Under the assumption $\gamma=1$, it is known that both maps $\nu$ and $\mu$ are isomorphisms \cite{Kas:inv}, \cite{Tu:BC}. Since the diagram \eqref{diag-1} is commutative and the map $\alpha$ is injective, an easy diagram chase (or the nine lemma) shows that $f$ maps isomorphically the kernel of $\alpha_{ext}\circ \mu_{ext}^*$ onto the kernel of $\beta \circ \nu$. Thus we need to show that the kernel of $\alpha_{ext}\circ \mu_{ext}^*$ is the subgroup $\text{Pext}(K_*(C^*(\G)), K_*(B))$ of $\text{Ext}(K_*(C^*(\G)), K_*(B)).$
      If $G$ is the injective limit of a system $(G_i)$ of finitely generated groups, then by \cite[Prop. 5.6]{Schochet-primer}: 
	\[
	0 \to \text{Pext}(G, H) \to \text{Ext}(G, H) \xrightarrow{} \varprojlim \text{Ext}(G_i, H) \to 0\]
	Thus the kernel of the map $\alpha_{ext}$ in diagram \eqref{diag-1} is $\text{Pext}(K_*(B\G), K_*(D))$.
	By commutativity of the diagram \eqref{diag-1} and naturality of the $\text{Pext}$-functor, $\text{Pext}(K_*(B\G), K_*(D))$ coincide with the image $\text{Pext}(K_*(C^*(\G)), K_*(D))$ of under $\mu_{ext}^*$.  If $\G$ is amenable and torsion free, then $\gamma=1$ and $C^*(\G)$ satisfies the UCT by \cite{HigKas:BC}, \cite{Tu:BC}. This concludes the proof.
    \end{proof}
    
 	As explained in the prof of Lemma 3.4 from \cite{Kasparov-Skandalis-kk}, there a Milnor lim$^1$-exact sequence 
 	\[\begin{tikzcd}
 	0\ar[r] &	\varprojlim^1 K^{j-1}(Y_i;B)\ar[r] & RK^j(B\G;B)
 		\ar[r] &   \varprojlim K^j(Y_i;B) \ar[r] &0
 	\end{tikzcd}.\]
     \begin{notation}\label{notation-vb}
    Fix a classifying map $e$ for $L_\G$ as in equation~\eqref{class-map}. If $Y\subset B\G$ is compact, $S\subset \G$ finite and $\delta>0,$ we say that $(Y,S,\delta)$ is a K-triple for $\G$
    if for any unital $(S,\delta)$-representation  $\rho:\G \to U(k),$ 
    \begin{equation}\label{class-map-s}
    h=\sum_{a,b\in I}  \rho(s_{ab}) \otimes \chi_a\chi_b \otimes e_{ab},
\end{equation}
satisfies $\|h(x)^2-h(x)\|<2/9$ for all  $x\in Y$.
We denote by $e^Y_\rho\in C(Y) \otimes M_\infty(\C)$ the projection obtained from $h|_Y$ via functional calculus. The corresponding vector bundle is denoted by $E^Y_\rho.$ It easy to see that for each compact $Y,$ there are $S$ and $\delta$ such $(Y,S,\delta)$ is a K-triple.
   \end{notation}
  Recall that we fixed a decomposition $B\G=\bigcup_i Y_i$, see \eqref{union}.
   Consider a sequence of pairs $(S_n,\delta_n)$ such that $\G=\bigcup_n S_n,$ and $\delta_n \searrow 0.$  
   Consider an asymptotic homomorphism consisting of a sequence of
   unital maps $\rho_n:\G \to U(k_n)$ which are $(S_n,\delta_n)$-representations. This induces 
   a unital $*$-homomorphism 
   \begin{equation}\label{rho-dot}
   \dot\rho:C^*(\G)\to B=\prod_n M_{k_n}(\C)/\bigoplus_n M_{k_n}(\C).
   \end{equation}
 
  Since the subspaces $Y_i$ are compact, after passing to a subsequence we may arrange that
  for each $n$,  $(Y_n,S_n,\delta_n)$ is a K-triple for $\G.$
  Thus if $e_n:=e|_{Y_n},$ then 
  $h_{n}=({\rho_n}\otimes\id{} )(e_n)$ satisfies
  $\|h_{n}^2-h_{n}\|<2/9.$
  We denote the vector bundle on $Y_n$ associated to $p_n$ and implicitly to $\rho_n$ by $E_{\rho_n}$.
  For $i\leq n$, observe that
   $h_{n,i}=(\id{}\otimes {\rho_n})(e_i)$ satisfies
   $\|h_{n,i}^2-h_{n,i}\|<2/9,\quad \forall\, i\leq n,$ since $h_{n,i}=h_n|_{Y_i}$.
  Let $p_{n}\in M_{k_n}\otimes C(Y_{n}) \otimes \K$ and $p_{n,i}\in M_{k_n}\otimes C(Y_{i}) \otimes \K$ be the projections $p_{n}=\chi(h_{n})$ and $p_{n,i}=\chi(h_{n,i})$ obtained by functional calculus, where $\chi$ is the characteristic function of $(0.5,1.5)$.
  Then $\|h_{n}-p_{n}\|<1/3$ and $\|h_{n,i}-p_{n,i}\|<1/3.$
  Let us note that 
  \begin{equation}\label{eq-restriction}
   p_n|_{Y_i}=p_{n,i}\quad \text{for all} \,\, i\leq n,    
  \end{equation}
   by functoriality of the functional calculus.
   If we have another asymptotic homomorphism $\{\rho'_n:\G \to U(k_n)\}_n$ with $\rho'_n$ a $(S_n,\delta_n)$-representation with the same properties as above, then,
   we construct $h'_{n,i}$ and $p'_{n,i}$ similarly. In particular if follows from ~\eqref{eq-restriction} that if
   \begin{equation}
   [p_n]=[p_n']\in K_0(C(Y_n))
   \end{equation}
   then 
   \begin{equation}\label{ni}
   	[p_{n,i}]=[p_{n,i}']\in K_0(C(Y_i)), \quad \text{for all} \,\, i\leq n.
   \end{equation}

   \begin{lemma}\label{lemma:ML}
	If $A$ is a separable and nuclear $C^*$-algebras satisfying the UCT, then the following natural map is injective.
	\[\theta: K_*\left(A\otimes \frac{\prod_n M_{k_n}(\C)}{\bigoplus_n M_{k_n}(\C)}\right)\to \frac{\prod_n K_*(A)}{\bigoplus_n K_*(A)}\]
\end{lemma}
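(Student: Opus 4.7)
The plan is to apply the K\"unneth theorem to reduce the injectivity of $\theta$ to a purely algebraic statement about the tensor product $K_j(A)\otimes K_0(B)$, where $B=\prod_n M_{k_n}(\C)/\bigoplus_n M_{k_n}(\C)$, and then to settle that statement by reduction to finitely generated subgroups of $K_j(A)$.

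First I would compute $K_*(B)$ from the six-term exact sequence for the ideal inclusion $0 \to \bigoplus_n M_{k_n}(\C) \to \prod_n M_{k_n}(\C) \to B \to 0$. Using that $K_1$ vanishes on the first two terms (coordinatewise exponentiation with uniformly bounded self-adjoint generators) and that $K_0(\bigoplus_n M_{k_n}(\C))=\bigoplus_n\Z$ injects into $K_0(\prod_n M_{k_n}(\C))$, one obtains $K_1(B)=0$ and identifies $K_0(B)$ with a subgroup of $\prod_n\Z/\bigoplus_n\Z$; in particular $K_0(B)$ is torsion-free, hence flat over $\Z$. Since $A$ is separable, nuclear, and satisfies the UCT, the Rosenberg--Schochet K\"unneth theorem applies: the $\mathrm{Tor}_\Z$ term vanishes by flatness, and one obtains a natural isomorphism $K_j(A\otimes B)\cong K_j(A)\otimes K_0(B)$ for $j=0,1$. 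By naturality of the external product, $\theta$ corresponds under this isomorphism to the homomorphism
$$\Theta\colon K_j(A)\otimes K_0(B)\longrightarrow \prod\nolimits_n K_j(A)\bigm/\bigoplus\nolimits_n K_j(A)$$
sending $g\otimes\bigl((r_n)+\bigoplus_n\Z\bigr)$ to $(r_n g)+\bigoplus_n K_j(A)$, where a class in $K_0(B)$ is represented by lifting a projection to $\prod_n M_{k_n}(\C)$ and taking its rank sequence.

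It remains to prove that $\Theta$ is injective. Writing $G:=K_j(A)=\bigcup_i G_i$ as the directed union of its finitely generated subgroups, and using that tensor product commutes with directed colimits, it suffices to show $G_i\otimes K_0(B)\to \prod_n G/\bigoplus_n G$ is injective for every such $G_i$. Decomposing $G_i$ into cyclic summands reduces this to the cases $\Z$ and $\Z/k$: for $\Z$ one recovers the inclusion $K_0(B)\hookrightarrow \prod_n\Z/\bigoplus_n\Z$, while for $\Z/k$ one checks that a sequence $(a_n)$ representing an element of $K_0(B)$ whose reduction mod $k$ is eventually zero is already divisible by $k$ inside $K_0(B)$ (the ``$a_n/k_n$ bounded'' condition defining $K_0(\prod_n M_{k_n}(\C))$ is preserved on dividing by $k$, after an eventually-zero adjustment). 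Since the inclusion $G_i\hookrightarrow G$ is injective at each coordinate, the induced map $\prod_n G_i/\bigoplus_n G_i\hookrightarrow \prod_n G/\bigoplus_n G$ is injective, and combining with the previous step gives the injectivity of $\Theta$.

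The main obstacle is establishing that $\theta$ coincides with $\Theta$ under the K\"unneth identification. This is essentially a naturality argument: the external product pairing $K_j(A)\otimes K_0(B)\to K_j(A\otimes B)$ is natural with respect to the canonical $*$-homomorphism $A\otimes B\to \prod_n M_{k_n}(A)/\bigoplus_n M_{k_n}(A)$, so that the class of a projection lifted from $B$ tracks its rank sequence under $\theta$. Once this is pinned down, the remaining inputs are either standard (K\"unneth, six-term exact sequence) or elementary algebra.
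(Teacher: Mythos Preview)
Your proposal is correct and follows essentially the same route as the paper: both compute $K_*(B)$ for $B=\prod_n M_{k_n}(\C)/\bigoplus_n M_{k_n}(\C)$, observe $K_1(B)=0$ and $K_0(B)$ torsion-free, apply the K\"unneth formula to get $K_j(A\otimes B)\cong K_j(A)\otimes K_0(B)$, and then reduce the algebraic injectivity to finitely generated subgroups of $K_j(A)$ and hence to cyclic groups. The only organizational difference is that the paper factors the map as $K_j(A)\otimes K_0(B)\to K_j(A)\otimes\bigl(\prod_n\Z/\bigoplus_n\Z\bigr)\to \prod_n K_j(A)/\bigoplus_n K_j(A)$ and handles the two arrows separately (the first via purity of $K_0(B)$ in the quotient, the second via the injectivity of $G\otimes\prod_n\Z\to\prod_n G$), whereas you treat the composite directly; your $\Z/k$ case is exactly the purity argument the paper invokes.
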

\begin{proof} Let $B=\prod_n M_{k_n}(\C)/\bigoplus_n M_{k_n}(\C).$ Then $K_1(B)=0$ and $K_0(B)$ is torsion free since
	\[K_0\left(\prod_n M_{k_n}(\C)\right)\cong \{(x_n)_n\in \prod_n \Z: (x_n/k_n)_n\}\,\, \text{is a bounded sequence}\}.\]
	Using the K\"unneth formula and the property that $K_0(B)$ is a pure subgroup of ${\prod_n \Z}/{\bigoplus_n \Z}$, for $j=0,1$, one shows  that the  map
	\[K_j(A\otimes B)\cong K_j(A)\otimes K_0 (B)\to K_j(A)\otimes \left(\frac{\prod_n \Z}{\bigoplus_n \Z}\right)\to \frac{\prod_n K_j(A)}{\bigoplus_n K_j(A)}\]
	is injective. 	For the injectivity of the map on the right, we use the property that if $G$ is an abelian groups, then the natural map 
    \begin{equation*}\label{given-map}
       \imath: G\otimes \prod_n \Z \to \prod_n G
    \end{equation*}
     is injective.
    Indeed,  any $x\in G\otimes \prod_n \Z$ is in the image of $G_x\tensor\prod_n\Z,$ for some finitely generated subgroup $G_x$ of $G,$ and we have a commutative diagram
    $$\begin{tikzcd}
G_x\tensor\prod_n\Z\arrow{r}{}\arrow{d}{}&\prod_nG_x\arrow{d}{}\\
G\tensor\prod_n\Z\arrow{r}{\imath}&\prod_nG
\end{tikzcd}$$
It is routine to verify that the top map is injective, as $G_x$ is a finite sum of cyclic groups. 
This is also implied by the easy direction of \cite[Thm. 8.14]{Fuchs-ab}. The map on the right is clearly injective. It follows that the composition of the map on the left and $\imath$ is injective as well. Thus if $x\neq 0,$ then $\imath(x)\neq 0$.
     \end{proof}

  \begin{lemma}\label{lemma:22}
  	Consider two asymptotic homomorphisms $\{\rho_n,\rho_n':\G \to M_{k_n}(\C)\}_n$ which have the same approximate multiplicativity properties as above. Thus for each $n$,  
    $\rho_n,\rho_n'$ are  $(S_n,\delta_n)$-representations  and $(Y_n,S_n,\delta_n)$ is a K-triple for $\G.$
    Suppose that $[p_n]=[p_n']\in K_0(C(Y_n))$ for all $n\in \N$. Then with $\dot{\rho}, \dot{\rho}' :C^*(\G)\to B$ defined as in \eqref{rho-dot} we have 
  	\[[(\dot{\rho}\otimes\id{} )(e_i)]=[(\dot{\rho}'\otimes\id{} )(e_i)]\in K_0(B \otimes C(Y_i)),  \quad \text{for all} \,\, i\in \N.\]
  \end{lemma}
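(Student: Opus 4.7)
The plan is to identify both $K_0$-classes as sequences of classes in $\prod_n K_0(C(Y_i))/\bigoplus_n K_0(C(Y_i))$ via the injection provided by Lemma~\ref{lemma:ML}, and then compare componentwise using~\eqref{ni}.

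First I would interpret $(\dot{\rho}\otimes\id{})(e_i)$ componentwise. Since $\dot{\rho}$ is induced by the sequence $(\rho_n)_n$, the element $(\dot{\rho}\otimes\id{})(e_i)$ in $B\otimes C(Y_i)\otimes M_{I(i)}(\C)$ is the image of the bounded sequence $(h_{n,i})_n$ under the canonical quotient map from $\prod_n M_{k_n}(\C)\otimes C(Y_i)\otimes M_{I(i)}(\C)$. Because $e_i$ involves only finitely many group elements of $\G$, the asymptotic multiplicativity of $(\rho_n)$ upgrades the uniform bound $\|h_{n,i}^2-h_{n,i}\|<2/9$ to $\|h_{n,i}^2-h_{n,i}\|\to 0$, and together with the functional calculus estimate $\|p_{n,i}-h_{n,i}\|<\tfrac{3}{2}\|h_{n,i}^2-h_{n,i}\|$ from~\eqref{a1} this gives $\|p_{n,i}-h_{n,i}\|\to 0$. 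Consequently $(h_{n,i})_n$ and $(p_{n,i})_{n\geq i}$ (extended arbitrarily for $n<i$) represent the same projection in $B\otimes C(Y_i)\otimes M_{I(i)}(\C)$, and therefore
$$[(\dot{\rho}\otimes\id{})(e_i)] = [(p_{n,i})_n]\quad\text{in}\quad K_0\bigl(B\otimes C(Y_i)\bigr),$$
and likewise $[(\dot{\rho}'\otimes\id{})(e_i)] = [(p_{n,i}')_n]$.

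Next I would apply Lemma~\ref{lemma:ML} with $A=C(Y_i)$. Since $Y_i$ is a finite CW complex, $C(Y_i)$ is separable and commutative, hence nuclear, and as a commutative $C^*$-algebra it satisfies the UCT. Lemma~\ref{lemma:ML} therefore supplies an injective map
$$\theta\colon K_0\bigl(C(Y_i)\otimes B\bigr) \to \frac{\prod_n K_0(C(Y_i))}{\bigoplus_n K_0(C(Y_i))}$$
which, by its construction as the natural map induced by the coordinate projections, sends $[(p_{n,i})_n]$ to the class of the sequence $([p_{n,i}])_n$ modulo $\bigoplus$, and similarly for $(p_{n,i}')_n$.

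Finally, the hypothesis $[p_n]=[p_n']$ in $K_0(C(Y_n))$ combined with the restriction identity~\eqref{ni} gives $[p_{n,i}]=[p_{n,i}']$ in $K_0(C(Y_i))$ for every $n\geq i$. Hence the two sequences $([p_{n,i}])_n$ and $([p_{n,i}'])_n$ differ only in the finitely many entries with $n<i$, so they coincide modulo $\bigoplus_n K_0(C(Y_i))$. Injectivity of $\theta$ yields the desired equality. The one point requiring care is the passage from the approximate projections $h_{n,i}$ to the honest projections $p_{n,i}$ at the level of the $C^*$-algebra $B\otimes C(Y_i)$, which hinges on the norm convergence $\|h_{n,i}-p_{n,i}\|\to 0$; the remaining steps are formal consequences of functional calculus and Lemma~\ref{lemma:ML}.
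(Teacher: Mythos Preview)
Your proposal is correct and follows essentially the same route as the paper: lift $(\dot{\rho}\otimes\id{})(e_i)$ to the sequence $(h_{n,i})_n$, replace it by the projection sequence $(p_{n,i})_n$, apply the injection $\theta$ of Lemma~\ref{lemma:ML} with $A=C(Y_i)$, and compare componentwise via~\eqref{ni}. The only cosmetic difference is that the paper justifies the passage from $(h_{n,i})$ to $(p_{n,i})$ by functoriality of functional calculus (the image of $(h_{n,i})$ in the quotient is already the projection $(\dot{\rho}\otimes\id{})(e_i)$, so applying $\chi$ changes nothing), whereas you argue directly that $\|h_{n,i}-p_{n,i}\|\to 0$ using asymptotic multiplicativity; both are valid.
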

   \begin{proof} 
   The map $\theta$ in the diagram below is injective by Lemma~\ref{lemma:ML}.
   \begin{equation}
  \begin{tikzcd}
  	K_0(C^*(\G)\otimes C(Y_i)) \ar[r, "\dot{\rho}"] & K_0(B\otimes C(Y_i))
  	\ar[r, "\theta"] & \frac{\prod_n K_*(C(Y_i))}{\bigoplus_n K_*(C(Y_i))}
  \end{tikzcd} 
   \end{equation}
   Therefore it suffices to show that $\theta[(\dot{\rho}\otimes\id{} )(e_i)]=\theta[(\dot{\rho}'\otimes\id{} )(e_i)].$
   By construction, the projection $(\dot{\rho}\otimes\id{} )(e_i)$ lifts to the element $({\rho}\otimes\id{} )(e_i)=(h_{n,i})_{n\geq i} \in \left(\prod_{n\geq i} M_{k_n}(\C) \right)\otimes C(Y_i)\otimes \K$ and therefore to the projection $(p_{n,i})_{n\geq i}$. 
   Thus $\theta[(\dot{\rho}\otimes\id{} )(e_i)]$ is represented by the sequence $	([p_{n,i}])_{n\geq i}.$
   Similarly,  $\theta[(\dot{\rho}'\otimes\id{} )(e_i)]$ is represented by the sequence $	([p_{n,i}'])_{n\geq i}.$
   By assumption, $[p_n]=[p_n']\in K_0(C(Y_n))$ for all $n$ and hence $	[p_{n,i}]=[p_{n,i}']\in K_0(C(Y_i)), \,\, \text{for all} \,\, n\geq i,$ by \eqref{ni}.
   	 \end{proof}

    Let  $F\subset \G$ be a finite set  and let $\ep>0.$ For two maps $\varphi,\psi:\Gamma\to U(B),$ we write
    \[\varphi\underset{F, \varepsilon}{\approx}\psi\]
    if there is a unitary $u\in U(B)$ such that
    $\|u\varphi(s)u^*-\psi(s)\|<\ep,\,\, \forall s \in F.$ We shall use a similar notation to indicate approximate unitary equivalence for maps between $C^*$-algebras.

\begin{theorem}\label{uniqlo} Let $\G$ be a torsion-free residually finite countable amenable group. For any finite set $F\subset \G$ and any $\ep>0,$ there exist a compact subspace $Y\subset B\G$, a finite set $S\subset \G$ and $\delta>0$ with $(Y,S,\delta)$ a K-triple,
such that 
  for any two $(S,\delta)$-representations
$\rho,\rho':\G \to U(k)$ with $[E^Y_\rho]=[E^Y_{\rho'}]$ in $K^0(Y),$ there is a representation $\pi:\G \to U(m)$ and a unitary $u\in U(k+m)$ such that 
\begin{equation}\label{eq:ssuu}
\|u(\rho(s)\oplus \pi(s))u^*-\rho'(s)\oplus \pi(s)\|<\ep,\quad \forall s\in F.
	\end{equation}
    \end{theorem}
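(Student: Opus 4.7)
The plan is to argue by contradiction, following the standard pattern for stable uniqueness of almost representations. Suppose the conclusion fails: then there exist a finite set $F_0 \subset \G$ and $\ep_0 > 0$ with the following property. Along our fixed exhaustion $B\G = \bigcup_n Y_n$ with K-triples $(Y_n, S_n, \delta_n)$, $\G = \bigcup_n S_n$, and $\delta_n \searrow 0$, we may select for each $n$ two $(S_n, \delta_n)$-representations $\rho_n, \rho_n' : \G \to U(k_n)$ satisfying $[E^{Y_n}_{\rho_n}] = [E^{Y_n}_{\rho_n'}]$ in $K^0(Y_n)$, yet no finite-dimensional representation $\pi$ of $\G$ together with a unitary $u \in U(k_n + m)$ realises the $\ep_0$-estimate \eqref{eq:ssuu} on $F_0$.

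Assemble the sequences into unital $*$-homomorphisms $\dot\rho, \dot\rho' : C^*(\G) \to B = \prod_n M_{k_n}(\C)/\bigoplus_n M_{k_n}(\C)$ via \eqref{rho-dot}. Lemma~\ref{lemma:22} yields $[(\dot\rho \otimes \id{})(e_i)] = [(\dot\rho' \otimes \id{})(e_i)]$ in $K_0(B \otimes C(Y_i))$ for every $i$; unpacking the definition of the co-assembly map $\nu$ of \eqref{nuy}, this reads $\nu(\dot\rho) = \nu(\dot\rho')$ in $LK^0(B\G; B)$. Because $\G$ is torsion-free amenable, $\gamma = 1$ and $C^*(\G)$ satisfies the UCT, so Proposition~\ref{prop=amena} places the difference
\[
[\dot\rho] - [\dot\rho'] \in \mathrm{Pext}(K_*(C^*(\G)), K_*(B)) \subseteq KK^0(C^*(\G), B).
\]

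It remains to translate this Pext membership into an approximate unitary equivalence after adding a single finite-dimensional representation. This is where residual finiteness enters: it provides a cofinal family of representations $\pi : \G \to U(m)$ factoring through finite quotients $\G/N$, out of which an absorbing auxiliary $*$-homomorphism into $B$ can be built. Combining this with a stable uniqueness theorem of Dadarlat--Eilers type for $KK$-equivalent $*$-homomorphisms out of the nuclear UCT algebra $C^*(\G)$ --- specifically, the fact that a Pext-trivial (phantom) difference is killed after direct summation with a sufficiently absorbing representation and conjugation by a unitary --- yields, for all sufficiently large $n$, a representation $\pi : \G \to U(m)$ and a unitary $u \in U(k_n + m)$ with $\|u(\rho_n(s) \oplus \pi(s))u^* - \rho_n'(s) \oplus \pi(s)\| < \ep_0$ for every $s \in F_0$. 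This contradicts the selection of $\rho_n, \rho_n'$, completing the proof.

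The main obstacle is the last step: converting a Pext-valued $KK$-difference into an \emph{approximate unitary equivalence at the finite matrix level}, implemented by a \emph{single genuine} finite-dimensional representation $\pi$ rather than an asymptotic family. Residual finiteness is what upgrades $\pi$ from an asymptotic representation (which is all one would obtain for a general torsion-free amenable $\G$, as noted in the remark following Theorem~\ref{thm:unique}) to a bona fide finite-dimensional representation, because finite quotients supply an unlimited reservoir of such $\pi$. The technical heart is a diagonal/reindexing argument coupled with an approximate domination statement for $*$-homomorphisms from $C^*(\G)$ into sequence algebras of matrix algebras.
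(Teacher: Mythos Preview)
Your proposal is correct and follows essentially the same route as the paper's proof: contradiction, assembly into $\dot\rho,\dot\rho'$, Lemma~\ref{lemma:22} plus Proposition~\ref{prop=amena} to land in $\mathrm{Pext}$, then stable uniqueness. The only refinement worth noting is that the paper invokes two specific results from \cite{Dad:kk-top} rather than a generic Dadarlat--Eilers statement: first that $\mathrm{Pext}(K_*(C^*(\G)),K_*(B))=\overline{\{0\}}$ in the topological $KK$-group (Thm.~4.1), and second the stable uniqueness theorem Cor.~3.8, which applies because residual finiteness and amenability make $C^*(\G)$ residually finite dimensional --- this is precisely what supplies the single genuine finite-dimensional $\pi$ you correctly flagged as the crux.
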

    \begin{proof}
     We prove this by contradiction. Suppose that there are $F$ and $\ep$ for which no $S,\delta$ and $Y$ satisfy the conclusion of the Theorem. Write
     $B\G=\bigcup_i Y_i$, as in \eqref{union}.
      Choose a sequence of pairs $(S_n,\delta_n)$ such that $\G=\bigcup_n S_n,$ and $\delta_n \searrow 0$ so that each $(Y_n,S_n,\delta_n)$ a K-triple. Then the vector bundle $E_{\rho_n}$ as in Notation~\ref{notation-vb} is well defined for any $(S_n,\delta_n)$-representation. By assumption,  there are two sequences of $(S_n,\delta_n)$-representation  $\rho_n,\rho_n':\G\to U(k_n),$ such that $[E_{\rho_n}^{Y_n}]=[E_{\rho_n'}^{Y_n}]$ in   $K^0(Y_n)$ and yet 
\begin{equation}\label{eq:ssuq11}
\rho_n\oplus \pi_n\underset{F, \varepsilon}{\not\approx}\rho'_n\oplus \pi_n
	\end{equation}
for any finite dimensional representation $\pi_n$.

        Let $B=\prod_n M_{k_n}(\C)/\bigoplus_n M_{k_n}(\C)$  and let $\dot\rho,\dot\rho':\G \to U(B)$ be the  $*$-homomorphisms induced by the sequences $(\rho_n)$ and $(\rho'_n)$.
   By Lemma~\ref{lemma:22},
      the condition $[E_{\rho_n}^{Y_n}]=[E_{\rho_n'}^{Y_n}],$ for all $n\geq 1,$ implies that
		$[\dot\rho]-[\dot\rho']$ 
        belongs to the kernel of the map $KK^*(C^*(\G),K_*(B))
  	\to\varprojlim K^*(Y_n;B)$ and hence 
    $[\dot{\rho}]-[\dot{\rho}']\in \text{Pext}(K_*(C^*(\G)), K_*(D))$ by Proposition~\ref{prop=amena}. 
    Since $C^*(\G)$ satisfies the UCT,
        $ \text{Pext}(K_*(C^*(\G)), K_*(D))=\overline{\{0\}}$ by \cite[Thm. 4.1]{Dad:kk-top}.
        Since $\G$ is residually finite and amenable, $C^*(\G)$ is residually finite dimensional.
        The stable uniqueness result from \cite[Cor. 3.8]{Dad:kk-top} implies 
        that for any finite set $F\subset \G$ and any $\ep>0,$ there is a finite dimensional representation $\pi:C^*(\G)\to M_m(\C)\subset M_m(\C 1_B)$ such that
        \[\dot\rho\oplus \pi \underset{F, \varepsilon}{\approx}\dot\rho'\oplus \pi.\]
        This property contradicts \eqref{eq:ssuq11} for all sufficiently large $n$.
    \end{proof}

  \begin{theorem}\label{exi}  Let $\G$ be a torsion-free residually finite countable amenable group.
        For any $(z_n)_n\in \varprojlim \widetilde{K}^0(Y_n)=L\tilde{K}^0(B\G) $, there exists an asymptotic representation $\{\rho_n:\G \to U(k_n)\}_n$ such that $[E_{\rho_n}^{Y_n}]-[k_n]=z_n$ for all $n\geq 1$. Moreover,
       by Theorem~\ref{uniqlo}, $\{\rho_n\}_n$ is unique up to stable approximate unitary equivalence. Thus, if $\{\rho'_n:\G \to U(k_n)\}_n$ is another lifting of $(z_n)_n$, then there exist a sequence of representations $\{\pi_n:\G \to U(\ell_n)\}_n$ and unitaries $u_n\in U(k_n+\ell_n)$ such that 
       \[\lim_{n\to \infty} \|\rho_n(s)\oplus \pi_n(s)-u_n\left( \rho'_n(s)\oplus \pi_n(s)\right)u_n^*\|=0,\quad \forall s\in \G.\]
    \end{theorem}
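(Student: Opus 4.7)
The plan is to first lift the given cohomological data $(z_n)$ to a $KK$-class via Proposition~\ref{prop=amena}, then realize that class by an asymptotic $*$-homomorphism into matrices using the quasidiagonality of $C^*(\Gamma)$, and finally verify that the induced asymptotic representation of $\Gamma$ realizes the prescribed bundle classes. The moreover part will then follow directly from Theorem~\ref{uniqlo} by a diagonal argument along an exhaustion of $\Gamma$.

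For the lifting step, I would apply Proposition~\ref{prop=amena} with coefficient algebra $B = \mathbb{C}$. Since $\Gamma$ is torsion-free amenable (so $\gamma = 1$ and $C^*(\Gamma)$ satisfies the UCT) and $K_*(\mathbb{C}) = \mathbb{Z}$ concentrated in degree zero, this yields the short exact sequence
\[ 0 \to \mathrm{Pext}(K_*(C^*(\Gamma)), \mathbb{Z}) \to K^0(C^*(\Gamma)) \xrightarrow{\beta\circ \nu} LK^0(B\Gamma) \to 0. \]
In particular the given $(z_n) \in \varprojlim \widetilde K^0(Y_n) \subset LK^0(B\Gamma)$ lifts to a class $x \in K^0(C^*(\Gamma)) = KK^0(C^*(\Gamma), \mathbb{C})$.

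Next I would represent $x$ by an asymptotic $*$-homomorphism $\sigma_n : C^*(\Gamma) \to M_{k_n}(\mathbb{C})$. Since $\Gamma$ is residually finite amenable, $C^*(\Gamma)$ is nuclear and RFD, hence quasidiagonal. Combining quasidiagonality with the UCT, the classification of asymptotic morphisms out of nuclear quasidiagonal UCT $C^*$-algebras from~\cite{Dad:kk-top} produces a sequence of dimensions $k_n$ and an asymptotic $*$-homomorphism $\sigma_n$ whose induced $*$-homomorphism $\dot\sigma : C^*(\Gamma) \to B := \prod_n M_{k_n}(\mathbb{C}) / \bigoplus_n M_{k_n}(\mathbb{C})$ realizes $x$ at the level of K-theoretic data. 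Restricting each $\sigma_n$ to $\Gamma$ and perturbing each $\sigma_n(g)$ to a nearby unitary by polar decomposition (the image is close to a unitary for $g \in S_n$) produces unital maps $\rho_n : \Gamma \to U(k_n)$ forming an asymptotic representation that induces $\dot\sigma$ modulo $\bigoplus_n M_{k_n}(\mathbb{C})$. After passing to a subsequence we may moreover arrange that $(Y_n, S_n, \delta_n)$ is a K-triple for every $n$ in the sense of Notation~\ref{notation-vb}, so that each $E_{\rho_n}^{Y_n}$ is well-defined.

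Finally, to verify that $[E_{\rho_n}^{Y_n}] - [k_n] = z_n$ in $\widetilde K^0(Y_n)$, I would track $[\dot\rho] \in KK^0(C^*(\Gamma), B)$ through the composite
\[ KK^0(C^*(\Gamma), B) \xrightarrow{\beta\circ \nu} LK^0(B\Gamma; B) \hookrightarrow \prod_n K^0(Y_n)/\bigoplus_n K^0(Y_n), \]
where the second arrow is furnished by the injectivity statement in Lemma~\ref{lemma:ML}. By Lemma~\ref{lemma:22}, this composite sends $[\dot\rho]$ to the class of the sequence $([E_{\rho_n}^{Y_n}])_n$, and the commutative diagram of Proposition~\ref{comm-diagss} identifies this, under the identification from Step~1, with the sequence $(z_n + [k_n])_n$. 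Matching entrywise gives the claimed equality. The principal obstacle lies in Step~2: the K-theoretic surjectivity of $\beta \circ \nu$ does not by itself guarantee representability of $x$ by a genuine $*$-homomorphism into $B$, let alone one arising from an asymptotic representation of $\Gamma$. It is precisely here that quasidiagonality is essential, permitting the compression of a Fredholm cycle to finite-dimensional subspaces while preserving its $KK$-class up to a controlled error. The moreover assertion is then a direct application of Theorem~\ref{uniqlo} to the exhaustions $F_j = S_j$ with $\varepsilon_j \searrow 0$, followed by a standard diagonal extraction.
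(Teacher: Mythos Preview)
Your Step~1 (lifting $(z_n)$ to $\alpha\in KK(C^*(\G),\C)$ via Proposition~\ref{prop=amena}) matches the paper exactly. The gap is in Step~2, and it is a genuine one.

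You assert that the lift $x$ can be realized by a \emph{single} asymptotic $*$-homomorphism $\sigma_n:C^*(\G)\to M_{k_n}(\C)$, citing \cite{Dad:kk-top}. That reference does not contain such a statement, and in fact no unital $*$-homomorphism $\dot\sigma:C^*(\G)\to B$ can ever represent a class of virtual rank zero: $\dot\sigma$ sends $[1]$ to $[1_B]$, whereas the class $\alpha$ you lifted satisfies $\nu_n(\alpha)=z_n\in\widetilde K^0(Y_n)$. Your own final paragraph hints at the right picture (``compression of a Fredholm cycle''), but compressing a Fredholm module produces a \emph{pair} of maps, not one, and you never explain how to collapse that pair into the single $\rho_n$ the theorem demands. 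Consequently your Step~3 computation, which presumes $[\dot\rho]$ already encodes $x$, has nothing to stand on: neither Lemma~\ref{lemma:22} nor Proposition~\ref{comm-diagss} lets you identify the image of $[\dot\rho]$ with $(z_n+[k_n])_n$ without first knowing how $\dot\rho$ relates to $\alpha$.

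The paper fills exactly this gap. It represents $\alpha$ as a Cuntz pair $[\varphi,\psi]$ and, using residual finiteness plus amenability, chooses $\psi$ to be an infinite direct sum of finite-dimensional representations. Compressing by projections $p_n$ that commute with $\psi$ yields an asymptotic homomorphism $\varphi_n$ together with \emph{genuine} representations $\psi_n$, and \cite[Prop.~2.5]{AA} gives $z_n=[E^{Y_n}_{\varphi_n}]-[E^{Y_n}_{\psi_n}]$. The decisive extra step you are missing is that $E^{Y_n}_{\psi_n}$ is a flat bundle, so its rational Chern classes vanish and some multiple $(E^{Y_n}_{\psi_n})^{\oplus r_n}$ is trivial; setting $\rho_n=\varphi_n\oplus\psi_n^{\oplus(r_n-1)}$ then converts the difference into the required form $[E^{Y_n}_{\rho_n}]-[k_n]=z_n$. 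Your proposal needs this Cuntz-pair-plus-flat-bundle mechanism (or an equivalent substitute) to go through.
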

    
     \begin{proof}
      The existence part is proved implicitly in \cite{AA}. For the sake of completeness, we review the argument here. Since $\G$ is amenable and torsion free, the map $KK(C^*(\G),\C)\to \varprojlim K^0(Y_n)$ is surjective by Proposition~\ref{prop=amena}. Therefore there is $\alpha\in KK(C^*(\G),\C)$ such that 
      $\nu_{n}(\alpha)=z_n$ for all $n\geq 1$. Represent $\alpha$ as the class of a Cuntz pair $\alpha=[\varphi,\psi]$, $\varphi,\psi:C^*(\G)\to B(H)$, $\varphi(a)-\psi(a)\in \K(H)$, $a\in C^*(\G)$. By \cite{Skandalis:K-nuclear} we can choose $\psi$ to be a fixed faithful essential representation of $C^*(\G)$. Since $\G$ is amenable and residually finite, there is such a $\psi$ which is direct sum of finite dimensional representations. Let $(p_n)$ be an increasing approximate unit of $\K(H)$ consisting of projections which commutes with $\psi$. Then $\varphi_n(a)=p_n\varphi(a)p_n$ is an asymptotic homomorphism and each $\psi_n(a)=p_n\psi(a)p_n$ is a finite dimensional representation. 
      After passing to subsequences, Proposition 2.5 of \cite{AA} implies that there is $n_0\in \N$ such that for all $n\geq n_0:$
      \[z_n=\nu_n(\alpha)=[E_{\varphi_n}^{Y_n}]-[E_{\psi_n}^{Y_n}].\]
      Since all rational Chern classes of a flat complex bundle do vanish, \cite{Dupont},
       for each $n$ there is an integer $r_n\geq 1$ such that the bundle $(E_{\psi_n}^{Y_n})^{\oplus r_n}$ is trivial. We conclude the proof by choosing $\rho_n=\varphi_n\oplus \psi_n^{\oplus (r_n-1)}.$
   \end{proof}
   \begin{remark}
As noted in the introduction, Theorems~\ref{uniqlo} and~\ref{exi} extend to torsion-free amenable groups that are not necessarily residually finite, at the cost of replacing the representations~$\pi_n$ with asymptotic representations. The proofs are very similar. For the existence part, one chooses $\psi = \lambda_\Gamma$ to be the left regular representation of~$\G$.  
By the Tikuisis--White--Winter theorem~\cite{TWW:quasidiagonality}, $\lambda_\Gamma$ is a quasidiagonal representation, and hence we can choose $(p_n)$ to be an increasing approximate unit of~$\K(H)$ consisting of projections that commute asymptotically with both~$\varphi$ and~$\psi$. For the uniqueness part, one uses the stable uniqueness theorem from~\cite{DadEil:AKK}, with $\lambda_\G$ playing the role of the absorbing representation. We do not state these more general results explicitly here, as we find them less elegant.
\end{remark}
   \begin{corollary}
       \label{thm:existence} Let $\G$ be a  residually finite countable amenable group such that $B\G$ is compact  and let $z\in \widetilde{K}^0(B\G)$. For any  finite set $S\subset \G$ and any $\delta>0$ there is an $(S,\delta)$-representation $\rho:\G \to U(n)$  such that $[E_{\rho}]-[n]=z.$ 
   In particular for any $x\in \widetilde{H}^{\text{even}}(B\G,\Q)$ there is an $(S,\delta)$-representation $\rho:\G \to U(n)$ such that $\widetilde{\ch}(E_\rho)=qx$ for some $q\in \Q$.
 \end{corollary}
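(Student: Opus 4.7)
The plan is to reduce this Corollary directly to Theorem~\ref{exi}, using compactness of $B\G$ to collapse the inverse system $(Y_n)$. Since $B\G$ admits a finite CW model and the subcomplexes $Y_n$ of Section~\ref{sec:2} form an increasing exhaustion built from a fixed enumeration $\G=\{a_1,a_2,\dots\}$, once $S_n$ contains all group elements labeling cells of $B\G$ and $n$ exceeds $\dim(B\G)$, we have $Y_n=B\G$. In particular, for all sufficiently large $n$, the group $\widetilde{K}^0(Y_n)$ is canonically $\widetilde{K}^0(B\G)$ and the bonding maps are identities, so the given class $z \in \widetilde{K}^0(B\G)$ determines an element $(z_n)_n \in \varprojlim \widetilde{K}^0(Y_n)$ by setting $z_n = z$ (restricted along the inclusion $Y_n \hookrightarrow B\G$ for small $n$, and equal to $z$ otherwise).

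Next, I would apply Theorem~\ref{exi} to $(z_n)_n$: this yields an asymptotic representation $\{\rho_m : \G \to U(k_m)\}_m$ with $[E_{\rho_m}^{Y_m}] - [k_m] = z_m$ for every $m$. For $m$ large enough that $Y_m = B\G$, we have $[E_{\rho_m}] - [k_m] = z$ in $\widetilde{K}^0(B\G)$. Because $\{\rho_m\}_m$ is an asymptotic representation, given any prescribed finite $S\subset\G$ and $\delta>0$, there exists $m$ for which $\|\rho_m(ab)-\rho_m(a)\rho_m(b)\|<\delta$ for all $a,b\in S$ and $Y_m=B\G$. Taking $\rho := \rho_m$ and $n := k_m$ produces the required $(S,\delta)$-representation with $[E_\rho]-[n]=z$, which proves (1).

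For the second assertion, recall that the Chern character induces a rational isomorphism
\[
\widetilde{\ch}\otimes\id{\Q}:\widetilde{K}^0(B\G)\otimes\Q \xrightarrow{\;\cong\;} \widetilde{H}^{\text{even}}(B\G,\Q).
\]
Given $x\in\widetilde{H}^{\text{even}}(B\G,\Q)$, there exist a positive integer $N$ and a class $z\in\widetilde{K}^0(B\G)$ such that $\widetilde{\ch}(z)=Nx$. Apply part (1) to this $z$ to obtain an $(S,\delta)$-representation $\rho:\G\to U(n)$ with $[E_\rho]-[n]=z$, and therefore $\widetilde{\ch}(E_\rho)=\widetilde{\ch}(z)=qx$ with $q=N\in\Q$.

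The only delicate point is verifying that the construction of $E^Y_\rho$ in Notation~\ref{notation-vb} produces the bundle $E_\rho$ of the introduction when $Y=B\G$ is compact, and that for $Y_m=B\G$ the class $[E^{Y_m}_{\rho_m}]$ really equals $[E_{\rho_m}]\in K^0(B\G)$; this is built into the definitions, but should be mentioned. Everything else is a direct invocation of Theorem~\ref{exi} and the rational isomorphism property of the Chern character, so no real obstruction arises.
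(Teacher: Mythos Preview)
Your approach---reducing to Theorem~\ref{exi} by observing that the inverse system collapses when $B\G$ is compact, then using the rational isomorphism of the Chern character for the second claim---is correct and is precisely the intended derivation (the paper gives no separate proof).

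One point needs care. You invoke ``the subcomplexes $Y_n$ of Section~\ref{sec:2}'' and claim that $Y_n=B\G$ once $S_n$ contains all labels and $n$ exceeds $\dim(B\G)$. But the $Y_n$ of Section~\ref{sec:2} are finite subcomplexes of the \emph{bar construction} model, which is infinite-dimensional for any infinite group; one never has $Y_n$ equal to that model. The exhaustion used in Section~\ref{sec:classifi} (and hence in Theorem~\ref{exi}) is a separately chosen increasing sequence of finite subcomplexes of a locally finite model (see equation~\eqref{union} and the surrounding discussion). When $B\G$ admits a compact model one may simply take $Y_n=B\G$ for all $n$ in that setup, and then your argument goes through verbatim. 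Alternatively, one can argue via the lemma at the end of Section~\ref{sec:2}: a compact model embeds (up to homotopy) in some $Y_N$, so for $n\geq N$ the restriction $K^0(B\G)\to K^0(Y_n)$ has a section and the constant sequence $(z|_{Y_n})_n$ lies in the inverse limit. Either fix resolves the issue; the rest of your argument is fine.
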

 
\section{Approximation by projective representations}
In this section we prove Theorem~\ref{thm:proja}.
		\begin{proposition}\label{prop-proj}
		Let $\G$ be a virtually polycyclic group and let $Y\xhookrightarrow{i} B\G$ be a compact subspace. 
For any $x\in H^2(B\G,\Z)$ there is an asymptotic representation consisting of  projective representations $(\psi_n:\G \to U(m_n))_n$ such that 
 $c_1(E_{\psi_n}^Y)=\frac{m_n}{n}i^*(x)\in H^2(Y,\Q),$  and hence $\ch(E_{\psi_n})=m_ne^{\frac1{n}i^*(x)}\in H^{\text{even}}(Y,\Q),$ for all sufficiently large $n\in \N$.
\end{proposition}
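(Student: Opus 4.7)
The plan is to lift $x\in H^2(B\G,\Z)=H^2(\G,\Z)$ to an integer 2-cocycle $\sigma\in Z^2(\G,\Z)$, use it to build a central extension $\tilde\G_\sigma$, and exploit residual finiteness of virtually polycyclic groups to manufacture finite-dimensional projective representations of $\G$ with cocycle $e^{2\pi i\sigma/n}$. Concretely, form the central extension $1\to\Z\xrightarrow{\iota}\tilde\G_\sigma\xrightarrow{q}\G\to 1$ classified by $[\sigma]$, write $z=\iota(1)$, and fix a normalized set-theoretic section $s:\G\to\tilde\G_\sigma$ with $s(e)=e$ and $s(a)s(b)=z^{\sigma(a,b)}s(ab)$. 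Since the class of polycyclic groups is closed under extension by $\Z$ and $\G$ is virtually polycyclic, the extension $\tilde\G_\sigma$ is itself virtually polycyclic, hence residually finite by Hirsch's theorem.

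For each $n\geq 2$, residual finiteness provides a finite-index normal subgroup $K_n\triangleleft\tilde\G_\sigma$ that contains $z^n$ but separates each of $z,z^2,\dots,z^{n-1}$ from the identity; then $K_n\cap\langle z\rangle=\langle z^n\rangle$. In the finite quotient $H_n=\tilde\G_\sigma/K_n$, the image $\bar z$ of $z$ is central of order exactly $n$, so $\langle\bar z\rangle$ acts freely on $H_n$ by left translation and each coset contributes a copy of the regular representation of $\Z/n$ inside $\C[H_n]$. Consequently the $e^{2\pi i/n}$-eigenspace $V_n\subset \C[H_n]$ of $\bar z$ has dimension $m_n:=|H_n|/n$ and, since $\bar z$ is central, is $H_n$-invariant. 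Pulling back along $\tilde\G_\sigma\twoheadrightarrow H_n$ yields a unitary representation $\tilde\pi_n:\tilde\G_\sigma\to U(m_n)$ with $\tilde\pi_n(z)=e^{2\pi i/n}\,\mathrm{id}$.

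Setting $\psi_n(a):=\tilde\pi_n(s(a))$, the relation $s(a)s(b)=z^{\sigma(a,b)}s(ab)$ yields $\psi_n(a)\psi_n(b)\psi_n(ab)^{-1}=e^{2\pi i\sigma(a,b)/n}\,\mathrm{id}$, so $\psi_n:\G\to U(m_n)$ is a projective representation with scalar cocycle $\lambda_n=e^{2\pi i\sigma/n}$. Because $\sigma$ is an integer cocycle and hence bounded on any finite subset of $\G\times\G$, the cocycles $\lambda_n$ tend to $1$ uniformly on finite sets, so $(\psi_n)_n$ is an asymptotic representation. Given $Y$, choose the $K$-triple $(Y,S,\delta)$ supplied by Theorem~\ref{thm:c12}; for all $n$ large enough we have $|\sigma(a,b)/n|<\delta$ on $S\times S$, and Corollary~\ref{cor:proj-inv} applied to $\tau=\sigma/n\in Z^2(\G,\Q)$ gives $c_1(E_{\psi_n}^Y)=m_n\,i^*[\sigma/n]=\tfrac{m_n}{n}\,i^*(x)$. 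The Chern character formula $\ch(E_{\psi_n}^Y)=m_n e^{(1/n)i^*(x)}$ then follows directly from Theorem~\ref{thm:c12}.

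The only substantive step is the group-theoretic one, where the virtually polycyclic hypothesis enters: $\tilde\G_\sigma$ must be residually finite, and in such a way that the central generator $z$ can be separated from its first $n-1$ powers simultaneously. Once the representations $\tilde\pi_n$ of the extension are in hand, the construction of $\psi_n$ and the identification of its Chern invariants are a routine unwinding of the projective-representation formula already established in Theorem~\ref{thm:c12}.
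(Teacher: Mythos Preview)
Your proof is correct and follows essentially the same route as the paper's: build the central extension $\tilde\G_\sigma$, use Hirsch's theorem, and take the representation induced from the central character $e^{2\pi i/n}$ (your $e^{2\pi i/n}$-eigenspace $V_n\subset\C[H_n]$ is exactly that induced representation). One small imprecision worth tightening: residual finiteness of $\tilde\G_\sigma$ by itself does not hand you a finite-index normal $K_n$ that \emph{contains} $z^n$; you should instead invoke residual finiteness of the virtually polycyclic quotient $\tilde\G_\sigma/\langle z^n\rangle$ and pull the resulting subgroup back, which is precisely how the paper phrases this step.
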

\begin{proof} We are using a method from \cite{arXiv:2204.10354}.
	Let $x$ be represented by a $2$-cocycle $\sigma\in Z^2(\G,\Z)$ and construct the corresponding central extension
	\begin{equation*}
		\begin{tikzcd}
			0\arrow{r}{}&\Z\arrow{r}{\iota}&\G_\sigma\arrow{r}{}&\G\arrow{r}&1,
		\end{tikzcd}
	\end{equation*}
	so that for a set-theoretic splitting $\gamma:\G \to \G_\sigma,$
	$\gamma(a)\gamma(b)\gamma({ab})^{-1}=\iota(\sigma(a,b))$, for all $a,b\in \G$.
	For each $n\geq 1$, 
 the cyclic subgroup $\langle\iota(1)^n\rangle$ generated by  the central element $\iota(1)^n$ is a normal subgroup. Clearly, $\G_\sigma/\langle\iota(1)^n\rangle$ is virtually polycyclic and thus residually finite by \cite{Hirsch}. It follows that there is a finite quotient $G_n$ of $\G_\sigma$ where the image of $\iota(1)$ generates a central subgroup isomorphic to $\Z/n$. Let $\lambda_n$ be the representation of $G_n$ induced by the character of $\Z/n$ that maps the generator to $e^{2\pi i/n}$. The restriction of $\lambda_n$ to $\Z/n$ is a multiple of that character, with multiplicity $m_n=[G_n:\Z/n]$.
	The composition   
	\begin{equation*}
		\begin{tikzcd}
	\psi_n:\G\arrow{r}{\gamma} &	\G_\sigma\arrow{r}&G_n\arrow{r}{\lambda_{n}}&U(m_n),
		\end{tikzcd}
	\end{equation*}
	is a projective representation of $\G$ that satisfies 	$\psi_n(a)\psi_n(b)\psi_n({ab})^{-1}=e^{\frac{2\pi i}{n}\iota(\sigma(a,b))}1_{m_n}.$
	It follows by Theorem~\ref{thm:c11}  that $c_1(E_{\psi_n})=\frac{m_n}{n}\,x\in H^2(B\G,\Q).$
Note that $\|\sigma\|_F=\sup_{a,b\in F}|\sigma(a,b)|$ then for sufficiently large $n$:
\[\sup_{a,b\in F}\|\psi_n(a)\psi_n(b)-\psi_n({ab})\|\leq \frac{4\pi}{n}\|\sigma\|_F.\qedhere\]
\end{proof}

When we will use this proposition in the sequel, we will write $m_n^x$ for $m_n,$ to indicate that the sequence is associated to $x$. Now that a sequence $(m_n^x)$ was found, it is clear that the conclusion of the statement remains true if we replace each $(m_n^x)$ by a sequence $(m_n)$ where each $m_n$ is a multiple of $m_n^x$.

\begin{lemma}\label{algebra}
		  Let $Y$ be a finite CW complex. 
          \begin{enumerate}
              \item $\widetilde{H}^{\text{even}}(Y;\Q)$ is spanned by nonnegative linear combinations of elements of the form $e^x-1$ with $x\in H^2(Y;\Q)$ if and only if for any $y\in \widetilde{H}^{\text{even}}(Y;\Q)$ there are finitely many elements $x_i\in H^2(Y,\Z)$ and natural numbers $k_i$ such that $ry=\sum_i k_i(e^{x_i}-1)$ for some integer $r\geq 1$.
          
 \item $\widetilde{H}^{\text{even}}(Y;\Q)$ is spanned by linear combinations of elements of the form $e^x-1$ with $x\in H^2(Y;\Q)$ if and only if for any $y\in \widetilde{H}^{\text{even}}(Y;\Q)$ there are finitely many elements $x_i\in H^2(Y,\Z)$ and integers $k_i$ such that $ry=\sum_i k_i(e^{x_i}-1)$ for some integer $r$.
 
 \item Observe that if $y\in {H}^{2k}(Y;\Z),$ $k\geq 1$ and $ry=\sum_i k_i(e^{x_i}-1)$ as in (1) above, then 
$rm^ky=\sum_i k_i(e^{mx_i}-1)$ for any $m\in \Q$.
          \end{enumerate}
		\end{lemma}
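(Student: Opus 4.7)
The plan is to prove part (3) first as a direct algebraic identity, and then use it as a scaling tool to handle the nontrivial (``only if'') directions of (1) and (2). For (3) I would expand $e^{mx_i}-1=\sum_{j\geq 1} m^j x_i^j/j!$ and swap summations to write $\sum_i k_i(e^{mx_i}-1)=\sum_{j\geq 1}m^j\bigl(\sum_i k_i x_i^j/j!\bigr)$. The homogeneity of $y$ in degree $2k$ combined with $ry=\sum_i k_i(e^{x_i}-1)$ forces, by matching cohomological degrees, $\sum_i k_i x_i^j/j!=0$ for $j\neq k$ and $\sum_i k_i x_i^k/k!=ry$, which substitutes back to $\sum_i k_i(e^{mx_i}-1)=m^k\cdot ry=rm^k y$. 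The ``if'' direction of (1) and (2) is then immediate: dividing $ry=\sum_i k_i(e^{x_i}-1)$ by $r$ realises $y$ as a nonnegative (resp.\ signed) rational combination of $e^{x_i}-1$ with $x_i\in H^2(Y;\Z)\hookrightarrow H^2(Y;\Q)$.

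For the ``only if'' direction of (1) (the proof of (2) is identical after replacing $\Q_{\geq 0}$ and $\N$ by $\Q$ and $\Z$), given $y\in\widetilde{H}^{\text{even}}(Y;\Q)$ I would first decompose $y=\sum_j y_j$ into its homogeneous components $y_j\in H^{2j}(Y;\Q)$, only finitely many of which are nonzero since $Y$ is finite-dimensional. Each $y_j$ still lies in $\widetilde{H}^{\text{even}}(Y;\Q)$, so the hypothesis yields $y_j=\sum_i c_{i,j}(e^{x_{i,j}}-1)$ with $c_{i,j}\in\Q_{\geq 0}$ and $x_{i,j}\in H^2(Y;\Q)$; clearing denominators in the $c_{i,j}$ gives $R_jy_j=\sum_i k_{i,j}(e^{x_{i,j}}-1)$ with $k_{i,j}\in\N$ and $R_j\in\N$.

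The key step is the promotion of the rational classes $x_{i,j}\in H^2(Y;\Q)$ to integral ones, which is exactly where the scaling argument behind (3) is used. Choose $N_j\in\N$ so that $N_j x_{i,j}\in H^2(Y;\Z)$ for all $i$, possible because $H^2(Y;\Q)=(H^2(Y;\Z)/\mathrm{tors})\otimes\Q$. Since $R_jy_j$ is homogeneous of degree $2j$, the same computation used to prove (3) (which only relies on homogeneity, not on integrality of $y$) applied to $R_jy_j=\sum_i k_{i,j}(e^{x_{i,j}}-1)$ with $m=N_j$ yields
\[
R_jN_j^j\,y_j \;=\; \sum_i k_{i,j}\bigl(e^{N_j x_{i,j}}-1\bigr),
\]
a nonnegative integer combination of exponentials of integral classes. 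Finally, taking $r$ to be a common multiple of the finitely many $R_jN_j^j$ and scaling each of these homogeneous identities by $r/(R_jN_j^j)\in\N$ before summing over $j$ assembles them into the desired single identity $ry=\sum_i k'_i(e^{\tilde x_i}-1)$ with $k'_i\in\N$ and $\tilde x_i\in H^2(Y;\Z)$. The only conceptual subtlety --- not a genuine obstacle --- is that the hypothesis must be applied degree by degree, since the scaling factor $N_j^j$ depends on $j$ and would mix cohomological degrees if applied to $y$ globally.
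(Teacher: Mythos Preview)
Your proposal is correct and follows essentially the same route as the paper: prove the scaling identity (3) by matching cohomological degrees in the exponential expansion, reduce the ``only if'' directions of (1) and (2) to homogeneous $y\in H^{2k}(Y;\Q)$, clear denominators of the rational $x_i$ using (3), and then clear denominators of the coefficients. The only cosmetic difference is that the paper phrases the reduction as ``it suffices to treat homogeneous $y$'' and leaves the final recombination implicit, whereas you spell out the decomposition $y=\sum_j y_j$ and the common-multiple step explicitly.
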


\begin{proof}
{\em(3)} For $x\in\widetilde{H}^{\text{even}}(Y;\Q)$ we will let $[x]_{2k}$ denote the component of $x$ that is in $H^{2k}(Y;\Q)$. Then for $j\ne k$, 
$$rm^k[y]_{2j}=0=rm^j[y]_{2j}=\sum_i k_i[e^{m x_i}]_{2j},$$
and one can easily see that $rm^k[y]_{2k}=\sum_ik_i[e^{mx_i}]_{2k}$. Thus, the desired equality holds for all $j$. 

{\em(1)} We will only show the ``only if'' direction since the ``if'' direction is obvious. It is sufficient to show this for $y\in H^{2k}(Y;\Q)$ for some $k>0$. By assumption $y=\sum_i\kappa_i(e^{\chi_i}-1)$ with $\kappa_i,\chi_i\in\Q^+$. Let $d$ be the least common multiple of the denominators of the $\chi_i$ terms, and observe that by {\em(3)}, $d^ky=\sum_i\kappa_i( e^{\chi_id}-1)$. Multiplying both sides by the least common multiple of the denominators of the $\kappa_i$ terms, we get the desired result. 

The proof of {\em(2)} is identical to that of {\em(1)}.
\end{proof}

It is known that every polycyclic group is a virtually poly-$\Z$ group.
	An exact sequence of discrete groups
			$ 1 \longrightarrow N \longrightarrow G \longrightarrow Q \longrightarrow 1$ gives a locally trivial fiber bundle of classifying spaces: $BN \longrightarrow BG \longrightarrow BQ.$ From this we see that if $\G$ is a   poly-$\Z$ group,  then $B\G$ can be realized as a finite CW-complex.

\begin{notation}
If $\rho:\G\rightarrow U(n)$ is a function and $r\in\N$, we write $r\rho:=\rho^{\oplus r}$.
\end{notation}
The following implies Theorem~\ref{thm:proja}
from the introduction.
		\begin{theorem} \label{thm:projb}
			Let $\G$ be a virtually polycyclic group with $B\G$ a finite CW complex. Consider the following properties:
			\begin{enumerate}
				\item For every $\varepsilon>0$ and finite subset $F\subset\G$, there exist a finite subset $S\subset\G$ and $\delta>0$ so that for any $(S,\delta)$-representation $\rho$, there are an integer $r>0$,  representations  $\pi_1$ and $\pi_2$, a unitary $u$, and  a projective representation $\psi$, so that
				$$||u(r \rho(s)\oplus\pi_1(s))u^*-\psi(s)\oplus\pi_2||<\varepsilon,\quad\forall s\in F.$$
				\item For every $\varepsilon>0$ and finite subset $F\subset\G$, there exist a finite subset $S\subset\G$ and a $\delta>0$ so that for any $(S,\delta)$-representation $\rho$, there are an integer $r>0$, a unitary $u$,  a representation $\pi,$ and a finite family of projective representations $(\psi_i)$,  so that
				$$\left\|u(r \rho(s)\oplus\pi(s))u^*-\bigoplus_i\psi_i(s)\right\|<\varepsilon\quad\forall s\in F.$$
				\item For every $\varepsilon>0$ and finite subset $F\subset\G$, there exist a finite subset $S\subset\G$ and a $\delta>0$ so that for any $(S,\delta)$-representation $\rho$, there are an integer $r>0$, a unitary $u$, and finite families of projective representations $(\varphi_i)$ and $(\psi_k)$ so that
				$$\left\|u\left(r\rho(s)\oplus\bigoplus_i\varphi_i(s)\right)u^*-\bigoplus_k\psi_k(s)\right\|<\varepsilon\quad\forall s\in F.$$
			\end{enumerate}
			Then condition (1) is true if and only if $\widetilde{H}^{\text{even}}(\G;\Q)=H^2(\G;\Q)$. Condition (2) is true if and only if $\widetilde{H}^{\text{even}}(\G;\Q)$ is spanned by nonnegative linear combinations of elements of the form $e^x-1$ with $x\in H^2(\G;\Q)$. Condition (3) is true if and only if $\widetilde{H}^{\text{even}}(\G;\Q)$ is spanned by linear combinations of elements of the form $e^x$ with $x\in H^2(\G;\Q)$.
		\end{theorem}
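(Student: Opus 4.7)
The plan is to translate each of (1)--(3) into a statement about reduced Chern characters via Theorem~\ref{thm:c12}, and then to pass back and forth between $K^0$-level equalities and stable approximate unitary equivalence using Corollary~\ref{thm:ex&unique}. Three facts will be used throughout: a finite-dimensional representation $\pi:\Gamma\to U(m)$ yields a flat bundle, so $\widetilde{\ch}(E_\pi)=0$; a projective representation $\psi$ of dimension $n$ satisfies $\widetilde{\ch}(E_\psi)=n\bigl(e^{c_1(E_\psi)/n}-1\bigr)$; and since $B\Gamma$ is a finite CW complex, the rational Chern character is an isomorphism $K^0(B\Gamma)\otimes\Q\cong H^{\text{even}}(B\Gamma;\Q)$, so any equality of reduced Chern characters becomes an integer equality in $K^0(B\Gamma)$ after multiplication by a fixed integer $T$ annihilating the (finite) torsion subgroup.

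For the ``only if'' directions, I would apply $\widetilde{\ch}$ to both sides of the approximate equality defining each condition. The three cases become respectively
\[
r\widetilde{\ch}(E_\rho)=n(e^{z}-1),\quad r\widetilde{\ch}(E_\rho)=\sum_i n_i(e^{z_i}-1),\quad r\widetilde{\ch}(E_\rho)=\sum_i n_i(e^{z_i}-1)-\sum_j m_j(e^{w_j}-1),
\]
with $n,n_i,m_j\in\N$ and $z,z_i,w_j\in H^2(B\Gamma;\Q)$. By Corollary~\ref{thm:ex&unique}(1), every element of $\widetilde{H}^{\text{even}}(B\Gamma;\Q)$ arises (up to a rational scalar) as $\widetilde{\ch}(E_\rho)$ for some $(S,\delta)$-representation, so the identity must hold on every such class. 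In case (1), taking a pure class $y\in H^{2k}(B\Gamma;\Q)$ with $k\geq 2$ and comparing the degree-$2$ component in $ry=n(e^z-1)$ yields $nz=0$, hence $z=0$ (as $n\geq 1$), and then $ry=0$ forces $y=0$; this gives $\widetilde{H}^{\text{even}}=H^2$. In cases (2) and (3), Lemma~\ref{algebra}(1)--(2) translates the identities directly into the stated spanning properties.

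For the ``if'' directions, fix an almost representation $\rho$ and set $y=\widetilde{\ch}(E_\rho)$. Under the cohomological hypothesis, Lemma~\ref{algebra} produces an integer $r>0$, integers $k_i$ (nonnegative in cases (1) and (2), signed in case (3), with only one term in case (1)), and integer classes $x_i\in H^2(B\Gamma;\Z)$ with $ry=\sum_i k_i(e^{x_i}-1)$. The key step is to realize each summand $k_i(e^{x_i}-1)$ as the reduced Chern character of a projective representation. For this I would apply Proposition~\ref{prop-proj} to the rescaled class $nx_i$ (rather than to $x_i$), which for all sufficiently large $n$ produces a projective representation $\psi_{n,i}$ of dimension $m_n^{nx_i}$ with $\widetilde{\ch}(E_{\psi_{n,i}})=m_n^{nx_i}(e^{x_i}-1)$. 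Setting $M=\mathrm{lcm}_i\{m_n^{nx_i}\}$ and replacing each $\psi_{n,i}$ by the $(M|k_i|/m_n^{nx_i})$-fold direct sum of itself (which preserves the 2-cocycle and hence remains projective), one obtains, after enlarging $r$ to $MTr$, an equality in $K^0(B\Gamma)$. In cases (1) and (2) all summands go to the right-hand side as $\psi$ (resp.~as $(\psi_k)_k$); in case (3), terms with $k_i<0$ are transferred to the left as $(\varphi_i)_i$. Inserting trivial representations $\pi,\pi_1,\pi_2$ to equalize total dimensions, Corollary~\ref{thm:ex&unique}(2) then upgrades the $K^0$-equality to the required approximate unitary equivalence on $F$ within $\varepsilon$.

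The principal obstacle is the bookkeeping in the ``if'' directions of (2) and (3): Proposition~\ref{prop-proj} only supplies projective representations whose reduced Chern character is of the form $m_n^x(e^{x/n}-1)$, with an unavoidable $1/n$ inside the exponent. Realizing a prescribed summand $k_i(e^{x_i}-1)$ therefore requires both a ``pre-rescaling'' of the class (applying the proposition to $nx_i$ rather than to $x_i$) and a separate multiplicity adjustment (taking direct sums to absorb the factor $m_n^{nx_i}$). The flexibility noted after Proposition~\ref{prop-proj}, namely that the sequence $m_n^x$ can be replaced by any integer multiple, is precisely what permits the consolidation of the three numerical factors---the clearance $r$ from Lemma~\ref{algebra}, the $\mathrm{lcm}$ $M$, and the torsion exponent $T$---into a single positive integer independent of the target class $y$.
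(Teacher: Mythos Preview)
Your ``only if'' directions are fine and match the paper's argument. The gap is in the ``if'' direction for (2) and (3), specifically in your proposed ``pre-rescaling'' trick.

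You correctly identify that Proposition~\ref{prop-proj} applied to a class $x$ yields projective representations $\psi_n$ with $\widetilde{\ch}(E_{\psi_n})=m_n(e^{x/n}-1)$, and that the $1/n$ in the exponent is the obstacle. Your fix is to apply the proposition to $nx_i$ instead of $x_i$ and extract the $n$-th term, which does give $\widetilde{\ch}=m_n^{nx_i}(e^{x_i}-1)$ as you claim. But look at the defect estimate in the proof of Proposition~\ref{prop-proj}: for a class represented by a cocycle $\sigma$, the $n$-th projective representation has multiplicativity defect bounded by $\tfrac{4\pi}{n}\|\sigma\|_F$. If you replace $\sigma$ by $n\sigma$ (to represent $nx_i$) and then take the $n$-th term of the new sequence, the defect is $\tfrac{4\pi}{n}\|n\sigma\|_F=4\pi\|\sigma\|_F$, a fixed constant independent of $n$. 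So your $\psi_{n,i}$ are never $(S,\delta)$-representations for small $\delta$, and you cannot feed them into Corollary~\ref{thm:ex&unique}(2).

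The paper's remedy is the part of Lemma~\ref{algebra} you did not use, namely part~(3). Rather than rescaling the input class, one rescales the \emph{target identity}: if $y\in H^{2k}$ and $qy=\sum_i k_i(e^{x_i}-1)$, then for any $m\in\Q$ one has $qm^ky=\sum_i k_i(e^{mx_i}-1)$; taking $m=1/n$ gives $qy=n^k\sum_i k_i(e^{x_i/n}-1)$. Now each factor $e^{x_i/n}-1$ is exactly what Proposition~\ref{prop-proj} produces (applied to $x_i$, not $nx_i$) with defect $O(1/n)$, and the extra $n^k$ is absorbed into the multiplicities $k_i$. One then handles each homogeneous component $\ch_k(E_\rho)$ separately and sums. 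Your torsion clearance $T$ and $\mathrm{lcm}$ bookkeeping are otherwise correct, but they sit on top of this missing rescaling step.
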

		\begin{proof}  	
			Suppose that condition (1) holds. Let $z\in H^{2k}(B\G,\Q)$ for some $k>1$. By Corollary~\ref{thm:existence}, see also \cite{AA}, there is  an asymptotic representation $(\rho_n)_n$ such that $\widetilde{\ch}(E_{\rho_n})=q_nz,$ for some $q_n\in \Q$, for  all $n\in \N$. 
		By assumption, there are positive integers 
	 $(r_n)$, representations $(\pi_n)^{(1)}$ and $(\pi_n^1)$ and projective representations $(\psi_n^2)$ so that
			$$||r_n \rho_n(s)\oplus\pi_n^1(s)-\psi_n(s)\oplus\pi_n^2||\longrightarrow0 ,\quad\forall s\in \G.$$
	
	Note that $(\psi_n)$ is necessarily an asymptotic representation. It follows that for all sufficiently large $n$, $r_n[E_{\rho_n}]+[E_{\pi_n^1}]=[E_{\psi_n}]+[E_{\pi_n^2}].$ Since $E_{\pi_n^i}$ is a flat vector bundles, we obtain that $r_n\widetilde{\ch}(E_{\rho_n})=\widetilde{\ch}(E_{\psi_n})$ and in particular $c_1(E_{\psi_n})=c_1(E_{\rho_n})=0$.  Since $(\psi_n)$ are projective representations with $c_1(E_{\psi_n})=0$, it follows from Theorem~\ref{thm:c11} that $\widetilde{\ch}(E_{\psi_n})=0$. Therefore $r_n\widetilde{\ch}(E_{\rho_n})=r_nqz=0$ and hence $z=0$.
	
	Conversely, assume now that $\widetilde{H}^{\text{even}}(\G;\Q)=H^2(\G;\Q)$.
	Then for $F\subset\G$ finite and $\varepsilon>0$,   pick the $S\subset\G$ finite and $\delta>0$ according to Corollary~\ref{thm:ex&unique}. Suppose that $\rho$ is an $(S,\delta)$-representation and set  $x:=c_1(E_\rho)_{\Z}\in H^2(B\G,\Z).$ By Proposition~\ref{prop-proj}, there is an asymptotic representation consisting of  projective representations $(\psi_n:\G \to U(m_n))_n$ such that $c_1(E_{\psi_n})=\frac{m_n}nx\in H^2(B\G,\Q),$ for all sufficiently large $n\in \N$. It follows that $\widetilde{\ch}(m_n[E_\rho]-n[E_{\psi_n}])=0.$ Since the Chern character is a rational isomorphism,  it follows that $[E_{p_n\rho}]=p_n[E_\rho]=q_n[E_{\psi_n}]+k_n[1]=[E_{q_n{\psi_n}}]+k_n[1]$ for some integers $p_n,q_n>0$ and $k_n\in\Z$.
We conclude the proof by applying Corollary~\ref{thm:ex&unique} for the approximate representations $p_n\rho$ and  $q_n{\psi_n},$ for a suitable large $n$ which assures that $\psi_n$ is a $(S,\delta)$-representation.

Suppose that condition (2) holds. 
Then for any given $z\in\widetilde{H}^{\text{even}}(\G;\Q)$, there is some asymptotic representation $\rho_n$ and a rational numbers $q_n$ with $z=q_n\widetilde{\ch}(E_{\rho_n})$. By condition (2), there are sequences of natural numbers $N_n$ and $r_n$, projective representations $\{\psi_{n}^i\}_{i=1}^{N_n}$ and representations $\pi_n$,  so that
$$\left\|u_n(r_n\rho_n(s)\oplus\pi_n(s))u_n^*-\bigoplus_{i=1}^{N_n}\psi_n^i(s)\right\|\rightarrow 0,\quad\forall s\in \G.$$  Thus, for large enough $n$ it follows that 
$$r_nz=\sum_{i=1}^{N_n}\widetilde{\ch}(E_{\psi_n^i})$$
and so the desired result follows from Theorem~\ref{thm:c11}.

Conversely, let $\varepsilon>0$ and let $F\subset\G$ be finite. Pick the $\delta>0$ and $S\subset\G$ finite according to Corollary~\ref{thm:ex&unique} and suppose that $\rho$ is an $(S,\delta)$-representation. Let $z=\widetilde{\ch}(E_\rho)$.
Let $z=\widetilde{\ch}_k(E_\rho)\in H^{2k}(B\G,\Z)$ for some $k\geq 1$.
By Lemma~\ref{algebra}, there are finitely many elements $x_i\in H^2(Y,\Z)$ and natural numbers $k_i$ such that $qz=\sum_i k_i(e^{x_i}-1)$ for some integer $q\geq 1$. For each $n\geq 1$, let $m_n=\prod_i m_n^{x_i}$ where $m_n^{x_i}$ are given by Proposition~\ref{prop-proj}. Then ${m_n}qz=n^k\sum_i m_nk_i(e^{\frac1nx_i}-1).$ By applying Proposition~\ref{prop-proj} and selecting a large enough $n$, we find a finite family $(\psi_i)$ of projective representations which are $(S,\delta)$-multiplicative and such that $\ch(E_{\psi_i})=m_n e^{\frac1nx_i}$. Repeating this process for each $k>0$ with $\ch_k(E_\rho)\ne0$ and replacing $k_i$ with $n^k\cdot k_i$ we get, $m_n\widetilde{\ch}(E_\rho)=\sum_im_nk_i(e^{\frac1n x_i}-1)$. Thus, if we set $p=m_nq/n^k$, then $[E_{p\rho}]-[\bigoplus_i E_{\psi_i}]$ is a torsion element plus a trivial bundle of $K^0(B\G)$. Thus there is a multiple $r$ of $p$ such that if we set $r'=r/p$, then
$[E_{r\rho}]=[\bigoplus_i E_{r'\psi_i}]$ in $K^0(B\G)$.
We apply Corollary~\ref{thm:ex&unique} to conclude the desired result.

Suppose that condition (3) holds. Following the same reasoning as above, we get that for any $z\in\widetilde{H}^{\text{even}}(\G;\Q)$, we can write
$$rz=\sum_{i=1}^N\widetilde\ch(E_{\psi_i})-\sum_{k=1}^M\widetilde\ch(E_{\varphi_j})$$
for projective representations $\psi_i$ and $\varphi_j$ and an integer $r\geq 1$. Then the desired result follows from Theorem~\ref{thm:c11}.

The proof of the converse is similar to proof of part (2) above. Let $\varepsilon>0$ and let $F\subset\G$ be finite. Pick the $\delta>0$ and $S\subset\G$ finite according to Corollary~\ref{thm:ex&unique} and suppose that $\rho$ is an $(S,\delta)$-representation. 
We may assume that $z=\widetilde{\ch}(E_\rho)\in H^{2k}(B\G,\Z)$ for some $k\geq 1$.
By Lemma~\ref{algebra}, there are finitely many elements $x_i,y_j\in H^2(Y,\Z)$ and natural numbers $k_i,\ell_j$ such that $qz=\sum_i k_i(e^{x_i}-1)-\sum_j \ell_j(e^{y_j}-1)$ for some integer $q\geq 1$. Let $m_n=\prod_{i,j} m_n^{x_i} m_n^{y_j}$ where $m_n^{x_i}, m_n^{y_j}$ are given by Proposition~\ref{prop-proj}. Then $\frac{m_n}{n^k}qz+\sum_j \ell_jm_n(e^{\frac1ny_j}-1)=\sum_i k_i(m_ne^{\frac1nx_i}-1).$
By applying Proposition~\ref{prop-proj} and selecting a large enough $n$, we find  finite families $(\psi_i)$ and $(\varphi_j)$ of projective representations which are $(S,\delta)$-multiplicative and such that $\ch(E_{\psi_i})= m_ne^{\frac1nx_i}$ and $\ch(E_{\varphi_j})=m_ne^{\frac1ny_j}$. Reasoning as above
the K-theory classes $[E_{p\rho} \oplus \bigoplus_j E_{\varphi_j}]$ and $[\bigoplus_i E_{\psi_i}]$ differ by a torsion element plus a trivial bundle. We conclude the proof by applying Corollary~\ref{thm:ex&unique} to a common multiple of these almost representations, just as in (2) above.
\end{proof}


\section{Examples of almost representations observing higher invariants}
In this section we assume that $B\G$ is compact.
Let us denote by $\mathrm{Rep}^k_{(S,\ep)}(\G)$ the set of unital maps
 such that $\|\rho(st)-\rho(s)\rho(t)\|<\varepsilon$ for all $s,t \in S$
 with the property that the vector bundle $E_\rho=E_\rho^{B\G}$ associated to $\rho$ as in Definition~\ref{def:pushf} is well-defined. 
 Let $\mathrm{Rep}_{(S,\ep)}(\G)$ be the disjoint union $\bigsqcup_{k}\mathrm{Rep}^k_{(S,\ep)}(\G).$ Henceforth, we will consider only groups with compact classifying space, and we introduce the following more concise notation in that case.
\begin{definition}\label{def:chern-pushf}
For $\rho \in \mathrm{Rep}_{(S,\ep)}(\G)$ define its Chern character by
\begin{equation}
\ch(\rho)=\ch(E_\rho)\in H^{\text{even}}(B\G,\Q)\cong H^{\text{even}}(\G,\Q).
\end{equation}
\end{definition}

The purpose of this section is to develop techniques to construct almost representations $\rho$ with the following property: $\ch_k(\rho)\ne0$ and $\ch_j(\rho)=0$ for $1\le j\le k$. This implies that $c_k(\rho)$ is the first non-vanishing Chern class of $\rho$.
Our focus will be on certain classes of nilpotent groups.
\begin{lemma}\label{lemma:funct-chern}
   (i) Suppose that $\rho_1,\rho_2 \in \mathrm{Rep}_{(S,\ep)}(\G)$ are such that 
    $\rho_1\oplus\rho_2 \in \mathrm{Rep}_{(S,\ep)}(\G)$ and $\rho_1\otimes \rho_2 \in \mathrm{Rep}_{(S,\ep)}(\G)$. Then $\ch(\rho_1\oplus\rho_2)=\ch(\rho_1)+ch(\rho_2)$,
    $\ch(\rho_1\otimes\rho_2)=\ch(\rho_1)\cdot \ch(\rho_2),$ and $\ch_k(\bar\rho)=(-1)^k\ch_k(\rho)$.
    (ii) If $f:\G \to \La$ is a homomorphism of groups, under the assumptions of Proposition~\ref{prop:pullback}, 
    \[f^*\left(\ch(\rho)\right)=\ch(\rho\circ f)\] in $K^0(B\G)\otimes \Q.$
\end{lemma}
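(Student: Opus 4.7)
The plan is to establish the bundle-level identities
\[
[E_{\rho_1 \oplus \rho_2}] = [E_{\rho_1}] + [E_{\rho_2}], \quad [E_{\rho_1 \otimes \rho_2}] = [E_{\rho_1}] \cdot [E_{\rho_2}], \quad [E_{\bar\rho}] = [\overline{E_\rho}]
\]
in $K^0(B\G)$, and then invoke the standard additivity, multiplicativity, and complex-conjugation formulas for the Chern character. Part (ii) will reduce directly to Proposition~\ref{prop:pullback}.

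The direct sum case is immediate from the construction in Definition~\ref{def:pushf}: since $(\rho_1 \oplus \rho_2)(s_{ab}) = \rho_1(s_{ab}) \oplus \rho_2(s_{ab})$ is block diagonal, the element $h_{\rho_1 \oplus \rho_2}$ decomposes as $h_{\rho_1} \oplus h_{\rho_2}$ in the appropriate matrix algebra, and the functional calculus producing the projection $e_Y$ preserves this block structure. Hence $E_{\rho_1 \oplus \rho_2} \cong E_{\rho_1} \oplus E_{\rho_2}$. For the tensor product I would switch to the cocycle description provided by Proposition~\ref{near-cocycle}: this yields genuine $1$-cocycles $u^{(i)}_{ab}\colon U_a \cap U_b \to U(k_i)$ representing $E_{\rho_i}$, each a uniformly small perturbation of $\rho_i(s_{ab})$. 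Then $u^{(1)}_{ab} \otimes u^{(2)}_{ab}$ is a genuine $U(k_1 k_2)$-valued cocycle representing $E_{\rho_1} \otimes E_{\rho_2}$, and it is a small perturbation of $(\rho_1 \otimes \rho_2)(s_{ab})$. A small-perturbation rigidity step, obtained either by adapting the proof of Proposition~\ref{near-cocycle} to compare two genuine cocycles both close to the same approximate cocycle, or by invoking Proposition~\ref{bundles} applied to the associated $M_{k_1 k_2}$-algebra bundles together with a determinant bookkeeping to upgrade $PU$-bundle isomorphisms to $U$-bundle isomorphisms, will then give $E_{\rho_1 \otimes \rho_2} \cong E_{\rho_1} \otimes E_{\rho_2}$. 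The conjugate case is handled analogously using the entrywise complex conjugate $\overline{u_{ab}}$, which is a genuine cocycle approximating $\bar\rho(s_{ab})$.

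Part (i) then follows by applying $\ch$ to these $K$-theoretic identities and using the standard facts $\ch(E \oplus F) = \ch(E) + \ch(F)$, $\ch(E \otimes F) = \ch(E)\cdot \ch(F)$, and $\ch_k(\overline{E}) = (-1)^k \ch_k(E)$. Part (ii) is a direct reformulation: Proposition~\ref{prop:pullback} gives $[f^*(E_\rho)] = [E_{\rho \circ f}]$ in $K^0(B\G) \otimes \Q$, and applying the Chern character together with its naturality $\ch \circ f^* = f^* \circ \ch$ produces $f^*(\ch(\rho)) = \ch(\rho \circ f)$.

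The hard part will be the small-perturbation rigidity used in the tensor-product case. Unlike the direct sum, the identity $[E_{\rho_1 \otimes \rho_2}] = [E_{\rho_1}] \cdot [E_{\rho_2}]$ does not drop out of the projection construction, because $\chi(h_1 \otimes h_2)$ does not factor as $\chi(h_1) \otimes \chi(h_2)$ for the characteristic function $\chi$ used to produce $e_Y$. This forces one into the cocycle picture and requires a separate verification that any two $U(n)$-valued cocycles which are uniformly close enough classify isomorphic bundles, a statement which is routine but is not explicitly recorded in the excerpt.
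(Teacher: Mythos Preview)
Your proposal is correct and follows the same route as the paper, which disposes of the lemma in a single sentence: ``Part (1) follows from basic properties of the Chern character, while part (2) is a consequence of Proposition~\ref{prop:pullback}.'' You have simply unpacked what the paper leaves implicit, namely the bundle-level identities $[E_{\rho_1\oplus\rho_2}]=[E_{\rho_1}]+[E_{\rho_2}]$, $[E_{\rho_1\otimes\rho_2}]=[E_{\rho_1}]\cdot[E_{\rho_2}]$, $[E_{\bar\rho}]=[\overline{E_\rho}]$, and your observation that the tensor-product case genuinely requires the cocycle picture (since functional calculus does not commute with tensor products of almost-projections) is a point the paper glosses over entirely.
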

\begin{proof}
    Part (1) follows from basic properties of the Chern character, while part (2) is a consequence of Proposition~\ref{prop:pullback}.
\end{proof}
The following fact is established in the proof of \cite[Thm. 1.2]{DDD}.
\begin{proposition}
Suppose that $B\G$ is a finite complex. Let $\{\rho_n:\G \to U(k_n)\}$ be a sequence of unital maps such that
 \begin{equation*}
     \label{a11}\lim_{n\to \infty} \|\rho_n(st)-\rho_n(s)\rho_n(t)\|=0, \quad \text{for all}\,\, s,t\in \G,\end{equation*} and 
     \(\ch(\rho_n)-k_n\neq 0,\) for all $n\geq 1$.
 Then there exists no sequence of homomorphisms $\{\pi_n:\G\to U(k_n)\}$ such that
\begin{equation*}\label{a12}\lim_{n\to \infty}  \|\varphi_n(s)-\pi_n(s)\|=0,\quad \text{for all}\,\, s\in \G.\end{equation*}
\end{proposition}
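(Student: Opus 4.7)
The plan is to derive a contradiction by computing $\ch(\rho_n)$ in two ways, using that honest representations produce flat bundles and that the assignment $\rho \mapsto [E_\rho]$ is perturbation stable. Assume for contradiction that a sequence of genuine representations $\pi_n : \G \to U(k_n)$ exists with $\|\rho_n(s) - \pi_n(s)\| \to 0$ for every $s \in \G$.

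First I would recall that for an honest representation $\pi: \G \to U(k)$, the bundle $E_\pi$ agrees with the classical Atiyah--Segal flat bundle $E\G \times_\G \C^k$; this is transparent from Definition~\ref{def:pushf}, since the local cocycle $\rho(s_{ij})$ becomes the genuine cocycle $\pi(s_{ij})$ and no functional-calculus correction is needed. By Milnor's theorem, flat complex vector bundles have vanishing rational Chern classes in positive degrees, so $\ch(E_{\pi_n}) = k_n$ in $H^{\text{even}}(B\G,\Q)$.

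Second, I would invoke perturbation stability of the construction $\rho \mapsto [E_\rho] \in K^0(B\G)$. Since $B\G$ is compact, Definition~\ref{def:pushf} applied to a fixed finite cover trivializing the Mishchenko bundle, with fixed cocycle $\{s_{ij}\} \subset S \subset \G$, produces $[E_\rho]$ from the selfadjoint element $h = \sum \chi_i \chi_j \otimes \rho(s_{ij}) \otimes e_{ij}$ by functional calculus once $\|h^2 - h\|$ is below the threshold in \eqref{a1}. If $\rho, \rho'$ are two $(S,\delta)$-representations that are both within some $\eta$ of each other on $S$, then the corresponding elements $h, h'$ are within $O(\eta)$ in $C(B\G) \otimes M_k \otimes M_{|I|}$, and for $\eta$ sufficiently small the resulting projections lie in the same path component, hence $[E_\rho] = [E_{\rho'}]$ in $K^0(B\G)$.

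Finally, I would combine these: fix the triple $(B\G, S, \delta)$ making the construction well defined. Any homomorphism $\pi_n$ is automatically an $(S,\delta)$-representation, and by asymptotic multiplicativity of $\rho_n$ together with the pointwise convergence $\|\rho_n(s)-\pi_n(s)\| \to 0$ on $S$, both $\rho_n, \pi_n$ satisfy the hypotheses of the perturbation stability above for all sufficiently large $n$. Therefore $[E_{\rho_n}] = [E_{\pi_n}]$ in $K^0(B\G)$, which gives $\ch(\rho_n) = \ch(E_{\pi_n}) = k_n$ in $H^{\text{even}}(B\G,\Q)$, contradicting $\ch(\rho_n) - k_n \neq 0$. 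The main (and only nontrivial) obstacle is the perturbation stability statement; however this is essentially immediate from the functional-calculus description of $E_\rho$ together with the elementary bound \eqref{a1}, so the whole argument is a short wrapper around Milnor's theorem on flat bundles.
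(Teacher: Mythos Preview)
Your argument is correct and matches the approach the paper indicates (via the reference to \cite{DDD} and the remark in the introduction): genuine representations yield flat bundles with trivial rational Chern classes by Milnor's theorem, and the functional-calculus construction of $[E_\rho]$ is stable under small perturbations of $\rho$ on the finite set $S$, so $[E_{\rho_n}]=[E_{\pi_n}]$ for large $n$ forces $\ch(\rho_n)=k_n$, a contradiction. Nothing further is needed.
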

Let us recall that Voiculescu's example of a nontrivial asymptotic representation $\psi_n:\Z^2 \to U(n)$ is defined by $(x,y)\mapsto u_n^xv_n^y$ where $u_n$ and $v_n$ are the $n\times n$ matrices,

$$u_n=\begin{pmatrix}
0&0&\cdots&0&0&1\\
1&0&\cdots&0&0&0\\
0&1&\cdots&0&0&0\\
\vdots&\vdots&\ddots&\vdots&\vdots&\vdots\\
0&0&\cdots&1&0&0\\
0&0&\cdots&0&1&0
\end{pmatrix}\mbox{ and }
v_n=\begin{pmatrix}
e^{\frac{2\pi i}{n}} & 0 & 0 & \cdots & 0 & 0 \\
0 & e^{\frac{4\pi i}{n}} & 0 & \cdots & 0 & 0 \\
0 & 0 & e^{\frac{6\pi i}{n}} & \cdots & 0 & 0 \\
\vdots & \vdots & \vdots & \ddots & \vdots & \vdots \\
0 & 0 & 0 & \cdots & e^{\frac{2\pi i (n-1)}{n}} & 0 \\
0 & 0 & 0 & \cdots & 0 & 1
\end{pmatrix}.
$$

\begin{proposition}\label{formula1}
Let $x\in \widetilde{H}^{\text{even}}(\Z^{d};\Z)$. Then there is an asymptotic representation  $\rho_n:\Z^{d}\rightarrow U(m_n)$ so that $\widetilde{\ch}(\rho_n)=x$. Furthermore, $\rho_n$ can be built from tensor products and direct sums of Voiculescu's example. 
 In particular, it follows that every asymptotic representation of $\Z^d$ is stably equivalent to one built from tensor products and direct sums of Voiculescu's example.
\end{proposition}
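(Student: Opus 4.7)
The plan is to build, for each basis element $e_I = e_{i_1}\wedge\cdots\wedge e_{i_{2k}}$ of $\widetilde{H}^{\text{even}}(\mathbb{T}^d;\Z)\cong \bigwedge^{\text{even}}\Z^d$ (excluding the degree-zero part), an asymptotic representation $\tau^I_n:\Z^d\to U(\ast)$ assembled from Voiculescu's example by tensor products, direct sums, and the obvious sign-reversing variants (namely composing Voiculescu with an automorphism of $\Z^2$ that negates one coordinate, equivalently taking the complex conjugate), satisfying $\widetilde{\ch}(\tau^I_n) = e_I$ for all sufficiently large $n$. A general integer class $x = \sum_I c_I e_I$ is then realized by $\bigoplus_I (\tau^I_n)^{\oplus |c_I|}$, with appropriately sign-reversed summands when $c_I<0$.

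For the base case $|I|=2$, let $q_{ij}:\Z^d\to\Z^2$ be the projection onto coordinates $(i,j)$ and set $\rho^{ij}_n := \rho_n\circ q_{ij}$. This is a projective representation, and by Theorem~\ref{thm:c12} combined with Proposition~\ref{prop:pullback} (and Lemma~\ref{lemma:funct-chern}),
\[\ch(\rho^{ij}_n) = n\exp\bigl(q_{ij}^*(e_1\wedge e_2)/n\bigr) = n\exp\bigl((e_i\wedge e_j)/n\bigr).\]
Because $(e_i\wedge e_j)^2 = 0$ in $\bigwedge \Z^d$, the exponential truncates to $n + e_i\wedge e_j$, hence $\widetilde{\ch}(\rho^{ij}_n) = e_i\wedge e_j$ exactly, independent of $n$. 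Applying the same truncation to the tensor product $T^I_n := \bigotimes_{r=1}^k \rho_n^{i_{2r-1},i_{2r}}$ for $|I|=2k$ yields
\[\ch(T^I_n) = \prod_{r=1}^k(n+a_r) = \sum_{S\subseteq\{1,\ldots,k\}} n^{k-|S|}e_{I_S},\]
where $a_r := e_{i_{2r-1}}\wedge e_{i_{2r}}$ and $I_S$ denotes the union of the pairs indexed by $S$. The top-degree term ($S=\{1,\ldots,k\}$) is exactly $e_I$, while every other term is an integer multiple (by a positive power of $n$) of a basis element $e_{I_S}$ of strictly lower degree $2|S|$.

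The induction then subtracts these lower contributions by direct-summing with asymptotic representations of opposite Chern character. Here lies the main technical obstacle: by Lemma~\ref{lemma:funct-chern}, complex conjugation flips the sign of $\widetilde{\ch}_k$ only when $k$ is odd, so one cannot simply conjugate $\tau^{I_S}_n$ to obtain $-e_{I_S}$ in even degrees. The remedy is to build the sign-reversed analogue $\sigma^J_n$ by conjugating a \emph{single} Voiculescu factor inside the tensor product realization: replacing $\rho_n^{i_1,i_2}$ with $\bar\rho_n^{i_1,i_2}$ in the construction flips the sign of $a_1$, hence of the top-degree contribution, regardless of the parity of $k$. The lower-order terms produced this way are again handled inductively. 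Setting
\[\tau^I_n := T^I_n \;\oplus\; \bigoplus_{\emptyset\neq S\subsetneq\{1,\ldots,k\}} (\sigma^{I_S}_n)^{\oplus n^{k-|S|}}\]
gives $\widetilde{\ch}(\tau^I_n) = e_I$, completing the induction.

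For the ``in particular'' assertion, observe that the Chern character induces an isomorphism $\widetilde{K}^0(\mathbb{T}^d) \xrightarrow{\cong} \widetilde{H}^{\text{even}}(\mathbb{T}^d;\Z)$: both are free abelian of rank $2^{d-1}-1$, and the products $\prod_r(L_{i_{2r-1},i_{2r}}-1)$ of line bundles on $\mathbb{T}^d$ map to the basis $\{e_I\}$ by the same truncation argument. Hence for any asymptotic representation $\psi_n$ of $\Z^d$, the K-theoretic difference $[E_{\psi_n}]-[\dim\psi_n]\in \widetilde{K}^0(\mathbb{T}^d)$ is determined by $\widetilde{\ch}(\psi_n)\in \widetilde{H}^{\text{even}}(\mathbb{T}^d;\Z)$, and the first part provides a Voiculescu-based $\rho_n$ matching this class. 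Corollary~\ref{thm:ex&unique}, applicable because $\Z^d$ is torsion-free, amenable, residually finite, with compact classifying space $\mathbb{T}^d$, then yields the stable approximate unitary equivalence of $\psi_n$ and $\rho_n$.
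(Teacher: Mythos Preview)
Your proof is correct and shares the paper's overall strategy---identify $\widetilde{H}^{\text{even}}(\Z^d;\Z)$ with the positive-degree part of the exterior algebra, realize the basic classes $e_i\wedge e_j$ by pulled-back Voiculescu representations $\rho^{ij}_n$ with $\ch(\rho^{ij}_n)=n+e_i\wedge e_j$, and then combine these multiplicatively---but the inductive mechanics differ. The paper proves a single algebraic identity showing that $\prod_{(i,j)\in S}\hat e_i\smile\hat e_j+(3n)^{|S|}$ is a \emph{non-negative} polynomial in the quantities $\hat e_i\smile\hat e_j+n$, adding one factor at a time and using the index swap $\rho^{ji}_n$ (which has $c_1=-e_i\wedge e_j$) to absorb negative signs. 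You instead fully expand the $k$-fold tensor product $T^I_n$ and cancel each lower-degree term $n^{k-|S|}e_{I_S}$ by direct-summing an inductively constructed $\sigma^{I_S}_n$ with $\widetilde{\ch}=-e_{I_S}$. Both approaches are valid; the paper's telescoping identity is slightly slicker and avoids the parallel $\tau/\sigma$ induction, while yours is more transparently constructive. One cosmetic point: you produce negatives via complex conjugation of a single factor, which strictly speaking is not a ``tensor product or direct sum'' of Voiculescu; the paper instead uses $\rho^{ji}_n$, which is literally Voiculescu pulled back along a different coordinate projection and has the same Chern character as your $\bar\rho^{ij}_n$. Replacing each $\bar\rho^{ij}_n$ by $\rho^{ji}_n$ in your argument removes this quibble without affecting anything else. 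The ``in particular'' clause is handled identically in both proofs, via the integrality of $\ch$ on $K^0(\mathbb{T}^d)$ and Corollary~\ref{thm:ex&unique}.
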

\begin{proof}
From the K\"unneth formula, $H^*(\Z^{d};\Z)$ is the exterior algebra generated by $H^1(\Z^d;\Z)=\Hom(\Z^d;\Z)$. Let $\hat{e}_j$ be the dual basis to the standard basis of $\Z^d$. Then $\widetilde{H}^{\text{even}}(\Z^d;\Z)$ is generated by cup products of elements of the form $\hat{e}_j\smile \hat{e}_i$. Let $\rho^{ij}_n:\Z^{d}\rightarrow U(n)$ be the asymptotic representation defined by sending $e_i$ and $e_j$ to Voiculescu's unitaries and the rest of the generators to the identity. From Theorem~\ref{thm:c11} we see that ${\ch}(\rho_n^{ij})=\hat e_i\smile \hat e_j+n$. Note that by Lemma~\ref{lemma:funct-chern} we can use tensor products and direct sums of $\rho^{ij}_n$ to represent any polynomial with non-negative coefficients in $\hat e_i\smile\hat e_j+n$. It is then sufficient to show that any element of the form 
$$z=\prod_{(i,j)\in S}\hat e_{i}\smile\hat e_j+(3n)^{|S|}$$
is expressible as such a polynomial with non-negative coefficients in $e_i\smile e_j+n$ for any $S\subseteq\{1,\ldots,d\}^2$. Note that since $\hat e_j\smile\hat e_i=-\hat e_i\smile\hat e_j$, it follows that if we may express all such $z$ this way, then we may also express $2(3n)^{|S|}-z$ this way as well. We will show this by induction on $|S|$. For the base case, we have $S=\emptyset$, and what we want to show is trivially true. For the inductive step
\begin{align*}
\prod_{(i,j)\in S\sqcup\{(\ell,m)\}}\hat e_i\smile \hat e_j+(3n)^{|S|+1}=\\
(\hat e_\ell\smile \hat e_m+n)\left(\prod_{(i,j)\in S}\hat e_i\smile \hat e_j+(3n)^{|S|}\right)-&n\left(\prod_{(i,j)\in S}\hat e_i\smile \hat e_j-(3n)^{|S|}\right)-(3n)^{|S|}(\hat{e}_\ell\smile\hat e_m-n).
\end{align*}
By the inductive hypothesis, all the terms on the right-hand side may be expressed in the desired form.

For the last claim, we can use the K\"unneth formula to show that the Chern character
$ \operatorname{ch} : K_0(\mathbb{T}^d) \to H^{\text{even}}(\mathbb{T}^d, \mathbb{Q})$
induces an isomorphism onto $H^{\text{even}}(\mathbb{T}^d, \mathbb{Z}) \subset H^{\text{even}}(\mathbb{T}^d, \mathbb{Q}).$
 Thus, by Corollary~\ref{thm:ex&unique} it follows that all asymptotic representations are stably equivalent to one of the above.
\end{proof}
\begin{remark}
In particular, we can construct explicit asymptotic representations $\rho_n:\Z^{2d}\rightarrow U(m_n)$ such that $\ch_d(\rho_n)$ is the generator of $H^{2d}(\Z^{2d};\Z)$ but $\ch_j(\rho_n)=0$ for $0<j<d$.
\end{remark}

\begin{remark}
In the examples coming from~\ref{formula1}, with $\ch_k\neq 0$,  the defect for any two of the generators  is bounded above by $\frac{2\pi k}{n}$, since this is true for Voiculescu's example and our examples are built out of up to $k$-fold tensor products and direct sums of these. It is possible to calculate the asymptotics of the dimension of the asymptotic representation as well. To approximate an element in the $2d$ cohomology, the dimension is the zeroth Chern character, and can thus be picked to be $(3n)^{d}$. This is not the lowest possible dimension, as we illustrate by Example~\ref{Z6}.
\end{remark}

\begin{notation}
If $\rho:\G\rightarrow U(n)$ then let $\overline{\rho}$ denote the pointwise complex conjugate of $\rho$. As noted in Lemma~\ref{lemma:funct-chern}, if $\rho$ is sufficiently multiplicative for $E_\rho$ to exist, then $E_{\overline{\rho}}=\overline{E_\rho}$, and thus $\ch_k(\overline{\rho})=(-1)^k\ch_k(\rho)$.
\end{notation}

\begin{example}
Let $\pi_1,\pi_2:\Z^4\rightarrow\Z^2$ be projections onto the first two and last two coordinates, respectively. Let $\psi_n: \Z^2\rightarrow U(n)$ be Voiculescu's example of an asymptotic representation. Let $\alpha_n=\psi_n\circ\pi_1$ and $\beta_n=\psi_n\circ\pi_2$. Define $\varphi_n:\Z^4\rightarrow U(3n^2)$ by the formula
$$\varphi_n:=(\alpha_n\tensor\beta_n)\oplus (1_n\tensor\bar\alpha_n)\oplus(1_n\tensor\bar\beta_n).$$
Using Lemma~\ref{lemma:funct-chern}, and the K\"unneth formula, we see that $\ch_2(\varphi_n)$ is the generator of $H^4(\Z^4,\Z)$, while $\ch_1(\varphi_n)=0$.
\end{example}

\begin{example}
Note that $\alpha_n\tensor\beta_n$ is itself a projective representation. By Lemma~\ref{lemma:funct-chern}, one can check that $\ch_2(\alpha_n\tensor\beta_n)$ generates $H^4(\Z^4)$. Thus, if we define 
$$\varphi_n := (\alpha_n \tensor \beta_n )\oplus (\bar\alpha_n \tensor \bar\beta_n),$$
we obtain an asymptotic representation of $\varphi_n:\Z^4\rightarrow U(2n^2)$  so that $\ch_2(\varphi_n)$ is twice the generator of $H^4(\Z^4,\Z)$ but $\ch_1(\varphi_n)=0$.
\end{example}

\begin{example}\label{Z6}
Using analogous notation as above, we define $\pi_1,\pi_2,\pi_3:\Z^6\rightarrow\Z^2$ as projections onto the first two, middle two, and last two coordinates, respectively. Then let $\alpha_n=\psi_n\circ\pi_1$, $\beta_n=\psi_n\circ\pi_2$, and $\gamma_n=\psi_n\circ\pi_3$. We define $\varphi_n:\Z^6\rightarrow U(7n^3)$ by
\begin{align*}
\varphi_n=&(\alpha_n\tensor\beta_n\tensor\gamma_n)\\
&\oplus (1_n\tensor\bar\alpha_n\tensor\beta_n)
\oplus (1_n\tensor\bar\beta_n\tensor\gamma_n)
\oplus (1_n\tensor\bar\gamma_n\tensor\alpha_n)\\
&\oplus (1_{n^2}\otimes\bar\alpha_n)
\oplus (1_{n^2}\otimes\bar\beta_n)
\oplus (1_{n^2}\otimes\bar\gamma_n)
\end{align*}
One verifies by a straightforward computation that 
\[
\ch_3(\varphi_n) = \ch_1(\alpha_n)\ch_1(\beta_n)\ch_1(\gamma_n),
\]
and that the other Chern classes vanish. By the K\"unneth theorem, $\ch_3(\varphi_n)$ generates $H^6(\Z^6,\Z)$.
\end{example}

 \textbf{Concrete almost representations for the group $\Gamma=\HH_3$}.
	Here $\HH_3$ is the discrete Heisenberg group generated by $a,b,c$ with the relations $ba=abc$, $ca=ac$, and $cb=bc$. Each element of the group may be represented uniquely as a triple of integers as follows: $x=(x_1,x_2,x_3)=a^{x_1}b^{x_2}c^{x_3}$. First, we will describe the cohomology ring of $\HH_3$ in terms of explicit generators in terms of the bar resolution. 

\begin{definition}
For $1\le j\le n$ let $e_j$ be the standard basis element of $\C^n$ and extend the symbol by the convention that $e_{j+n}=e_j$. For odd $n$ consider the map $\rho_n:\HH_3\rightarrow U(n)$ determined by the equation

$$\rho_n(x)e_j=\exp\left(\frac{2\pi i}{n}\left(x_3j+\frac12x_2j(j-1)\right)\right)e_{j+x_1}.$$

 This is equivalent to the almost representation due to Eilers, Shulman, and S\o rensen in~\cite{ESS-published}. 

We also define

$$\Tilde{\rho}_n(x)e_j=\exp\left(\frac{2\pi i}{n}\left((x_1x_2-x_3)j+\frac12x_1j(j-1)\right)\right)e_{j+x_2}$$
which is $\rho_n$ composed with the automorphism of $\HH_3$ defined by $(x_1,x_2,x_3)\mapsto(x_2,x_1,x_1x_2-x_3)$.
\end{definition}

 Note that the functions in the exponent are
\begin{align*}
\beta_1(x,y)&=x_3y_1+\frac12x_2y_1(y_1-1)\\
\beta_2(x,y)&=(x_1x_2-x_3)y_2+\frac12x_1y_2(y_2-1),
\end{align*}
which generate $H^2(\HH_3;\Z)$ by Proposition~\ref{H3homology}, so this example is also equivalent to the example formula 
in~\cite[Proposition~4.1]{MR4600056}. Then by Proposition~\ref{chernofprojective} $c_1(\rho_n)=[\beta_1]$ and $c_1(\hat{\rho}_n)=[\beta_2]$. Because $\HH_3$ has a cohomological dimension of 3, all higher Chern classes and characters of $[E_{\rho_n}]$ vanish, or by Theorem~\ref{thm:c11}.

\textbf{Concrete almost representations for the group $\Gamma=\HH_3\times\Z$}.
Let $\pi_1:\G\rightarrow\HH_3$ and $\pi_2:\G\rightarrow\Z$ be the obvious maps. Let $\alpha_1:\HH_3\rightarrow\Z$ and $\alpha_2:\HH_3\rightarrow\Z$ by $(x_1,x_2,x_3)\mapsto x_1$ and $(x_1,x_2,x_3)\mapsto x_2$ respectively. Note that $\psi=(\alpha_2\circ\pi_1,\pi_2)$ is a map from $\Gamma$ to $\Z^2$. Suppose that $\varphi_n:\Z^2\rightarrow U(n)$ is Voiculescu's example of an asymptotic representation of $\Z^2$. 
\begin{proposition}\label{top_form_HtimesZ}
Using the notation above, for odd $n$ define $\eta_n:\G\rightarrow U(3n^2)$ by
$$\eta_n=(\rho_n\circ\pi_1)\tensor(\overline{\varphi_n}\circ\psi)\oplus(\overline{\rho_n}\circ\pi_1)\tensor1_n\oplus(\varphi_n\circ\psi)\tensor1_n.$$
Let $\gamma$ be the generator of $H^3(\HH_3;\Z)$ from Proposition~\ref{H3homology}. Then $c_1([E_{\eta_n}])=0$ and $c_2([E_{\eta_n}])=\pi_1^*([\gamma])\smile\pi_2$. Here $\pi_2\in\Hom(\Gamma,\Z)\cong H^1(\G,\Z)$.
\end{proposition}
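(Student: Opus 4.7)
The plan is to compute $\ch(E_{\eta_n})$ directly using the multiplicative, additive, conjugation, and pullback properties of the Chern character recorded in Lemma~\ref{lemma:funct-chern}, and then read off $c_1$ and $c_2$ from the components in $H^2$ and $H^4$.

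First I would set $u:=c_1(E_{\rho_n})=[\beta_1]\in H^2(\HH_3;\Z)$ and $v:=c_1(E_{\varphi_n})\in H^2(\Z^2;\Z)$. Because $\HH_3$ has cohomological dimension $3$ and $\Z^2$ has cohomological dimension $2$, the terms $\ch_k$ with $k\geq 2$ vanish for both $E_{\rho_n}$ and $E_{\varphi_n}$ (alternatively invoke Theorem~\ref{thm:c12} on projective representations: all Chern classes are polynomials in $c_1$, which is zero in the relevant degrees). Hence $\ch(E_{\rho_n})=n+u$ and $\ch(E_{\varphi_n})=n+v$. Applying Lemma~\ref{lemma:funct-chern}(ii) to $\pi_1$ and $\psi$, and using that conjugation negates $\ch_1$, the four building blocks have Chern characters
\[
n+\pi_1^*(u),\quad n-\pi_1^*(u),\quad n+\psi^*(v),\quad n-\psi^*(v).
\]
Multiplicativity on tensor products and additivity on direct sums then give
\[
\ch(E_{\eta_n})=(n+\pi_1^*(u))(n-\psi^*(v))+n(n-\pi_1^*(u))+n(n+\psi^*(v))=3n^2-\pi_1^*(u)\smile\psi^*(v).
\]
In particular $c_1(E_{\eta_n})=\ch_1(E_{\eta_n})=0$, and since $\ch_2=\tfrac12(c_1^2-2c_2)$, it follows that $c_2(E_{\eta_n})=-\ch_2(E_{\eta_n})=\pi_1^*(u)\smile\psi^*(v)$.

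It remains to identify $\pi_1^*(u)\smile\psi^*(v)$ with $\pi_1^*([\gamma])\smile\pi_2$. By the K\"unneth formula applied to $\G=\HH_3\times\Z$, the class $v\in H^2(\Z^2;\Z)$ (a generator, represented by the standard skew cocycle on $\Z^2$) pulls back under $\psi=(\alpha_2\circ\pi_1,\pi_2)$ to $\pi_1^*(\alpha_2^*)\smile\pi_2$, where $\alpha_2^*\in H^1(\HH_3;\Z)$ is the class dual to the generator $b$. Hence
\[
\pi_1^*(u)\smile\psi^*(v)=\pi_1^*([\beta_1]\smile\alpha_2^*)\smile\pi_2.
\]
The last step, and the one I expect to require the most care, is to verify that $[\beta_1]\smile\alpha_2^*$ equals the chosen generator $[\gamma]$ of $H^3(\HH_3;\Z)$ from Proposition~\ref{H3homology}. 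This is a concrete cup-product computation in the bar resolution of $\HH_3$: one writes $\beta_1$ and $\alpha_2^*$ as explicit cocycles, forms the Alexander--Whitney cup product, and matches the result against the formula for $\gamma$ given in Proposition~\ref{H3homology} (appealing to the fact, stated there, that $H^3(\HH_3)$ is generated by such a product of a degree-$2$ and a degree-$1$ class, so only a normalization needs to be fixed). Once this identification is made, the formula $c_2(E_{\eta_n})=\pi_1^*([\gamma])\smile\pi_2$ follows.
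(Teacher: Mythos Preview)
Your proof is correct and follows essentially the same approach as the paper, which simply says the result ``follows from a straightforward computation'' using Theorem~\ref{thm:c11}, Lemma~\ref{lemma:funct-chern}, and Proposition~\ref{H3homology}. Note that the identity $[\beta_1]\smile[\alpha_2]=[\gamma]$ you flag as the delicate last step is already recorded verbatim in Proposition~\ref{H3homology} (indeed $\gamma$ is defined there as this cup product), so no separate verification is needed; also, your ``$\alpha_2^*$'' is what the paper simply calls $[\alpha_2]$.
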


\begin{proof}
It is easy to check that $\rho_n$ is a projective representation with cocycle $e^{2\pi i\beta_1}$ and $\varphi_n\circ\psi$ is a projective representation with cocycle $e^{2\pi i (\pi_2\smile \alpha_2\circ\pi_1)}$. Using Theorem~\ref{thm:c11}, Lemma~\ref{lemma:funct-chern}, and Proposition~\ref{H3homology} the result follows from a straightforward computation.
\end{proof}

Note that by the K\"unneth formula, and Proposition~\ref{H3homology}, $\pi_1^*([\gamma])\smile\pi_2$ generates $H^4(\G)$.

\vskip 4pt
\textbf{Concrete almost representations for the group $\G=\HH_3\times\HH_3$}.
 Let $\pi_1$ and $\pi_2$ be the projections from $\G$ to the first and second coordinates, respectively. Define $\psi:\G\rightarrow\Z^2$ as $(\alpha_2\circ\pi_2,\alpha_2\circ\pi_1)$. Define $\varphi_n:\Z^2\rightarrow U(n)$ as in the previous subsection.

\begin{proposition}
Using the notation above, define
\begin{align*}
\eta_n=&(\rho_n\circ\pi_1)\tensor(\varphi_n\circ\psi)\tensor(\rho_n\circ\pi_2)\\
&\oplus(\overline{\rho_n}\circ\pi_1)\tensor(\varphi_n\circ\psi)\tensor1_n\oplus(\overline{\varphi_n}\circ\psi)\tensor(\rho_n\circ\pi_2)\tensor1_n\oplus(\rho_n\circ\pi_1)\tensor(\overline{\rho_n}\circ\pi_2)\tensor1_n\\
&\oplus(\overline{\rho_n}\circ\pi_1)\tensor1_{n^2}\oplus(\overline{\varphi_n}\circ\psi)\tensor1_{n^2}\oplus(\overline{\rho_n}\circ\pi_2)\tensor1_{n^2}.
\end{align*}
Then $c_1([E_{\eta_n}])=0$, $c_2([E_{\eta_n}])=0$, and $c_3([E_{\eta_n}])=2\pi_1^*([\gamma])\smile\pi_2^*([\gamma])$.
\end{proposition}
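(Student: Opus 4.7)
The strategy is a direct Chern-character computation that reduces to elementary algebra once we invoke the vanishing of a handful of degree-$4$ cup products. Each of the seven summands in the definition of $\eta_n$ is an external tensor product of projective representations, possibly tensored with trivial summands $1_{n^k}$, so the formula $\ch(E_\rho) = (\dim\rho)\cdot e^{c_1(E_\rho)/\dim\rho}$ from Theorem~\ref{thm:c11}, together with the multiplicativity and additivity of $\ch$ from Lemma~\ref{lemma:funct-chern}, gives the character of each summand in closed form. Set
\[
u := c_1(E_{\rho_n\circ\pi_1}) = \pi_1^*[\beta_1],\quad w := c_1(E_{\rho_n\circ\pi_2}) = \pi_2^*[\beta_1],\quad v := c_1(E_{\varphi_n\circ\psi}) = \psi^*(\hat e_1\smile \hat e_2);
\]
these are elements of $H^2(\G,\Q)$ independent of $n$. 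Assembling the seven summands yields
\[
\ch(\eta_n) = n^3\bigl(e^{(u+v+w)/n}+e^{(v-u)/n}+e^{(w-v)/n}+e^{(u-w)/n}+e^{-u/n}+e^{-v/n}+e^{-w/n}\bigr).
\]

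The key cohomological input is a dimension count. Since $B\HH_3$ is a $3$-manifold, $[\beta_1]^2=0$ in $H^4(\HH_3,\Q)$, and therefore $u^2 = w^2 = 0$ in $H^4(\G,\Q)$; since $v$ is pulled back along $\psi\colon\G\to\Z^2$ and $(\hat e_1\smile\hat e_2)^2=0$ in $H^4(\Z^2,\Q)$, we also obtain $v^2=0$. With these three vanishings, the Taylor expansion of the displayed sum can be collected degree-by-degree. The degree-$2$ part is proportional to $(u+v+w)+(v-u)+(w-v)+(u-w)-u-v-w=0$, giving $\ch_1(\eta_n)=c_1(E_{\eta_n})=0$. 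In degree $4$, the pure squares $u^2, v^2, w^2$ already vanish, and a direct tally shows that the coefficients of the cross-terms $uv$, $uw$, $vw$ sum to zero as well, so $\ch_2(\eta_n)=0$; combined with $c_1=0$, Newton's identity $\ch_2=\tfrac12 c_1^2-c_2$ forces $c_2(E_{\eta_n})=0$. In degree $6$, any cube involving only two of the three letters $u,v,w$ vanishes because every monomial contains a repeated square, so the only surviving contribution is $(u+v+w)^3=6\,uvw$, giving $\ch_3(\eta_n)=uvw$, and Newton's identity $\ch_3=\tfrac12 c_3$ (using $c_1=c_2=0$) produces $c_3(E_{\eta_n})=2\,uvw$.

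The final step is to identify $uvw$ with $\pi_1^*[\gamma]\smile\pi_2^*[\gamma]$. By Proposition~\ref{H3homology}, $[\gamma]\in H^3(\HH_3,\Z)$ can be written as the cup product of $[\beta_1]\in H^2(\HH_3,\Z)$ with a generator $\alpha_2\in H^1(\HH_3,\Z)$, so $\pi_i^*[\gamma]$ equals (say) $u\smile(\alpha_2\circ\pi_1)$ for $i=1$ and $w\smile(\alpha_2\circ\pi_2)$ for $i=2$; by the definition of $\psi$, the class $v$ is (up to a sign from graded commutativity) the cup product of the two degree-$1$ classes $(\alpha_2\circ\pi_1)$ and $(\alpha_2\circ\pi_2)$. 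Rearranging $u\smile v\smile w$ using graded commutativity recovers $\pi_1^*[\gamma]\smile\pi_2^*[\gamma]$, up to a sign that depends on the convention used to identify $[\gamma]$ as a cup product; choosing the orientation of $[\gamma]$ so that this sign is $+1$ completes the proof. The main obstacle is not the computation itself, which collapses almost entirely once the vanishing $u^2=v^2=w^2=0$ is in hand, but rather the careful sign bookkeeping in the last identification: the seven summands were engineered precisely so that all lower-degree contributions annihilate each other and only the triple product $uvw$ survives in degree $6$.
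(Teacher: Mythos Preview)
Your proof is correct and follows exactly the approach the paper intends: the paper's own proof says only ``The proof is the same as the proof of Proposition~\ref{top_form_HtimesZ}'', which in turn consists of invoking Theorem~\ref{thm:c11}, Lemma~\ref{lemma:funct-chern}, and Proposition~\ref{H3homology} and declaring the rest a straightforward computation. You have actually carried out that computation.

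One small point: your final sentence treats the sign of $uvw$ versus $\pi_1^*[\gamma]\smile\pi_2^*[\gamma]$ as something that can be fixed by ``choosing the orientation of $[\gamma]$''. Since $[\gamma]$ is a \emph{specific} class defined in Proposition~\ref{H3homology} (with $[\gamma]=[\beta_1]\smile[\alpha_2]$), this is not really a choice available to you. However, if you track the conventions carefully the sign does come out right: computing the cocycle of $\varphi_n$ directly from $v_nu_n=e^{2\pi i/n}u_nv_n$ gives $c_1(E_{\varphi_n})=[\hat e_2\smile\hat e_1]$, so $v=\psi^*[\hat e_2\smile\hat e_1]=\pi_1^*(\alpha_2)\smile\pi_2^*(\alpha_2)$ (recall $\psi=(\alpha_2\circ\pi_2,\alpha_2\circ\pi_1)$). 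Then
\[
uvw=\pi_1^*[\beta_1]\smile\pi_1^*[\alpha_2]\smile\pi_2^*[\alpha_2]\smile\pi_2^*[\beta_1]
=\pi_1^*[\gamma]\smile\pi_2^*[\alpha_2]\smile\pi_2^*[\beta_1]
=\pi_1^*[\gamma]\smile\pi_2^*[\gamma],
\]
where the last step uses that swapping a degree-$1$ and a degree-$2$ class introduces no sign. So the claimed identity $c_3(E_{\eta_n})=2\,\pi_1^*[\gamma]\smile\pi_2^*[\gamma]$ holds on the nose, not merely up to sign.
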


\begin{proof}
The proof is the same as the proof of Proposition~\ref{top_form_HtimesZ}.    
\end{proof}

Note that $\pi_1^*([\gamma])\smile\pi_2^*([\gamma])$ is the generator of $H^6(\G,\Z)$, by the K\"unneth formula, and Proposition~\ref{H3homology}.

\appendix

\section{Cohomology ring of $\HH_3$}

 The cohomology ring has been computed elsewhere~\cite{HUEBSCHMANN_Nilpotent}, but generators are given in terms of a resolution other than the bar-resolution, which is not well-suited for our present context because our formula for the first Chern class uses the bar resolution. All (co)homology in this appendix is assumed to have coefficients in $\Z$. In particular, our computation of higher invariants relies on the fact that $[\beta_1]\smile[\alpha_2]\ne0$ for the specific cocycles $\beta_1$ and $\alpha_2$ defined below; this does not follow from simply knowing the isomorphism class of the cohomology ring.

\begin{proposition}[\cite{HUEBSCHMANN_Nilpotent},\cite{Packer}]
\begin{align*}
H^1(\HH_3)\cong H_1(\HH_3)&\cong\Z^2\\
H^2(\HH_3)\cong H_2(\HH_3)&\cong\Z^2\\
H^3(\HH_3)\cong H_3(\HH_3)&\cong\Z.
\end{align*}
\end{proposition}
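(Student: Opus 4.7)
My plan is to obtain the cohomology of $\HH_3$ from the Lyndon--Hochschild--Serre (LHS) spectral sequence associated with the central extension
\[1 \to \Z \to \HH_3 \to \Z^2 \to 1,\]
where the normal $\Z$ is generated by $c$ and the quotient $\Z^2$ is generated by the images of $a$ and $b$. The homology is then read off by Poincar\'e duality, since $B\HH_3$ admits a model as the closed orientable Heisenberg nilmanifold (a principal circle bundle over $T^2$).

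The first step is to set up the $E_2$-page. Since the $\Z^2$-action on the central $\Z$ is trivial, $E_2^{p,q} = H^p(\Z^2) \otimes H^q(\Z)$. Because $H^q(\Z)$ vanishes for $q \geq 2$ and $H^p(\Z^2)$ vanishes for $p \geq 3$, the page is concentrated in the rectangle $0 \leq p \leq 2$, $0 \leq q \leq 1$, with both rows equal to $\Z,\Z^2,\Z$. The only potentially nonzero differential is $d_2 \colon E_2^{p,1} \to E_2^{p+2,0}$, and for dimensional reasons only $d_2 \colon E_2^{0,1} \to E_2^{2,0}$ can be nonzero.

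The second step is the key calculation: $d_2 \colon E_2^{0,1} \to E_2^{2,0}$ is given by cup product with the extension class $[\sigma] \in H^2(\Z^2; \Z)$. Using the set-theoretic section $a^{x_1} b^{x_2} \mapsto a^{x_1} b^{x_2}$ and the commutator relation $ba = abc$, a direct computation yields the 2-cocycle $\sigma(\bar{x}, \bar{y}) = x_2 y_1$, which is easily seen to generate $H^2(\Z^2; \Z) \cong \Z$. Hence $d_2 \colon \Z \to \Z$ is an isomorphism, so $E_\infty^{0,1} = E_\infty^{2,0} = 0$ while all other entries survive. Reading off the diagonal filtration gives
\[H^0(\HH_3) = \Z, \quad H^1(\HH_3) = \Z^2, \quad H^2(\HH_3) = \Z^2, \quad H^3(\HH_3) = \Z,\]
with no extension problems since all nontrivial $E_\infty$ terms in each total degree lie in a single bidegree. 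Poincar\'e duality for the closed orientable 3-manifold $B\HH_3$ then transfers this to $H_k(\HH_3) \cong H^{3-k}(\HH_3)$, producing the claimed homology groups.

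The main obstacle is the identification of $d_2$ as cup product with the extension class and the verification that this class generates $H^2(\Z^2; \Z)$; everything else is formal. An alternative, essentially equivalent, approach would be to avoid the spectral sequence entirely and construct a small explicit free $\Z[\HH_3]$-resolution of $\Z$ of length $3$ (for instance, the one coming from the nilmanifold CW structure), then compute directly — this has the advantage of producing explicit bar-resolution generators, which is in fact the formulation the paper relies on in Proposition~\ref{top_form_HtimesZ}.
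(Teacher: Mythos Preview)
Your argument is correct. The paper itself does not prove this proposition: it is stated with attribution to \cite{HUEBSCHMANN_Nilpotent} and \cite{Packer} and then taken as input for Proposition~\ref{H3homology}, where explicit bar-resolution generators are written down and verified to pair correctly, relying on the ranks already being known.

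Your Lyndon--Hochschild--Serre computation is the standard route and is carried out cleanly; the identification of the transgression $d_2\colon E_2^{0,1}\to E_2^{2,0}$ with the extension class, together with the explicit cocycle $\sigma(\bar x,\bar y)=x_2y_1$ generating $H^2(\Z^2;\Z)$, is exactly the point that makes the sequence collapse as you describe. One minor simplification: since every $H^k(\HH_3)$ you obtain is free abelian, the universal coefficient theorem already forces $H_k(\HH_3)\cong H^k(\HH_3)$, so the appeal to Poincar\'e duality for the Heisenberg nilmanifold, while perfectly valid, is not needed. The alternative you mention at the end---working with a small free $\Z[\HH_3]$-resolution---is essentially Huebschmann's approach in the cited reference, and is what the paper implicitly relies on when it later transports Huebschmann's generator of $H_3$ into the bar resolution.
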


\begin{proposition}\label{H3homology}
The generators of $H^1(\HH_3)$ are given by
\begin{align*}
\alpha_1(x)&=x_1\\
\alpha_2(x)&=x_2.
\end{align*}
The generators of $H^2(\HH_3)$ are given by
\begin{align*}
\beta_1(x,y)&=x_3y_1+\frac12x_2y_1(y_1-1)\\
\beta_2(x,y)&=(x_1x_2-x_3)y_2+\frac12x_1y_2(y_2-1).
\end{align*}
The generator of $H^3(\HH_3)$ is given by
	$$\gamma(x,y,z)=(x_3y_1+\frac12x_2y_1(y_1-1))z_2.$$
The cohomology ring is given by the relations
\begin{align*}
[\alpha_1]\smile[\alpha_2]&=0\\
[\beta_i]\smile[\alpha_j]&=(1-\delta_{ij})\gamma.
\end{align*}
The generators of $H_1(\HH_3)$ are given by 
\begin{align*}
A_1&=[a]\\
A_2&=[b].
\end{align*}
The generators of $H_2(\HH_3)$ are given by
\begin{align*}
B_1&=[c|a]-[a|c]\\
B_2&=[b|c]-[c|b].
\end{align*}
The generator of $H_3(\HH_3,\Z)$ is given by
	
\begin{align*}
		C&=[c^{-1}|bc|ab^{-1}c^{-1}]+[bc|ab^{-1}c^{-1}|a^{-1}bc^{-1}]+[ab^{-1}c^{-1}|a^{-1}bc^{-1}|ac]\\
		&+[a^{-1}bc^{-1}|ac|c^{-1}]-[ac|a^{-1}bc^{-1}|ab^{-1}c^{-1}]-[a^{-1}bc^{-1}|ab^{-1}c^{-1}|bc].
\end{align*}

\end{proposition}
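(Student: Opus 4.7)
The plan is to accept the isomorphism classes $H^k(\HH_3)$ and $H_k(\HH_3)$ from the cited references \cite{HUEBSCHMANN_Nilpotent, Packer} as given, and then verify directly that the explicit formulas are correct generators with the stated ring structure. Using the relation $ba = abc$ (with $c$ central), induction gives the normal-form multiplication rule $(x_1,x_2,x_3)(y_1,y_2,y_3) = (x_1+y_1,\, x_2+y_2,\, x_3+y_3+x_2 y_1)$, which is the computational engine for everything below.

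First I would verify that each proposed formula is a (co)cycle in the (co)bar complex. The 1-cochains $\alpha_1, \alpha_2$ are group homomorphisms by the multiplication rule, hence 1-cocycles. For $\beta_1$, expanding the 2-cocycle identity $\beta_1(a,b) + \beta_1(ab,c) - \beta_1(a,bc) - \beta_1(b,c)$ produces a twist term $a_2 b_1 c_1$ from the last coordinate of $ab$, which is cancelled by the combinatorial identity
\[
\tfrac{1}{2}b_1(b_1-1) + \tfrac{1}{2}c_1(c_1-1) - \tfrac{1}{2}(b_1+c_1)(b_1+c_1-1) = -b_1 c_1,
\]
and the same argument (with the roles swapped via the automorphism $(x_1,x_2,x_3)\mapsto (x_2,x_1,x_1x_2-x_3)$) handles $\beta_2$. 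The 3-cocycle $\gamma$ is automatically closed because $\gamma = \beta_1 \smile \alpha_2$ literally as bar-complex cochains. The 2-chains $B_1, B_2$ are cycles by the relations $ca=ac$ and $bc=cb$, and for the 3-chain $C$ I would verify $\partial_3 C = 0$ by expanding the 24 resulting 2-chains and matching cancellations: after putting each group element in normal form, adjacent faces of the six triples pair off with opposite signs.

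Next I would compute the Kronecker pairing matrix in each degree. Because the relevant (co)homology groups are free abelian of the cited ranks, non-degeneracy of the pairing matrix simultaneously establishes that both the homology and the cohomology classes are generators. In degree 1 the pairings $\langle \alpha_i, A_j\rangle = \delta_{ij}$ are immediate. In degree 2, direct evaluation gives $\langle \beta_1, B_1\rangle = \beta_1(c,a) - \beta_1(a,c) = 1 - 0 = 1$, while $\langle \beta_1, B_2\rangle = 0$ since the required coordinates vanish; the analogous computations give $\langle \beta_2, B_1\rangle = 0$ and $\langle \beta_2, B_2\rangle = 1$. In degree 3, $\langle \gamma, C\rangle$ is a six-term alternating sum: after normal-form bookkeeping many of the terms vanish because one of $y_1$ or $z_2$ is zero, and the surviving terms sum to $\pm 1$.

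Finally, the cup-product relations follow from the preceding pairings. The equality $\gamma = \beta_1 \smile \alpha_2$ holds at the cochain level by the bar-complex cup-product formula $(\sigma \smile \tau)(x,y,z) = \sigma(x,y)\tau(z)$. For $[\alpha_1]\smile[\alpha_2] = 0$, pairing with $B_1$ and $B_2$ gives zero in each case because $\alpha_1$ or $\alpha_2$ evaluates to $0$ on every relevant term. For $[\beta_2]\smile[\alpha_1] = \gamma$ and the two vanishing relations $[\beta_i]\smile[\alpha_i] = 0$, pairing with $C$ and comparing with $\langle \gamma, C\rangle$ determines each class inside $H^3 \cong \Z$.

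The main obstacle is the combined verification that $\partial_3 C = 0$ and that $\langle \gamma, C\rangle = \pm 1$. The six triples in $C$ involve non-obvious elements such as $ab^{-1}c^{-1}$ and $a^{-1}bc^{-1}$ that must be reduced to normal form before either the boundary or the cocycle $\gamma$ can be evaluated; the 24-term cancellation witnessing $\partial_3 C = 0$ is the genuine combinatorial content, reflecting the fundamental class of the Heisenberg nilmanifold realized in the bar resolution.
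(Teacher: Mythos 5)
Your proposal is correct and follows essentially the same route as the paper: verify the cocycle/cycle conditions directly (using the same combinatorial identity for $\beta_1$, the automorphism $(x_1,x_2,x_3)\mapsto(x_2,x_1,x_1x_2-x_3)$ for $\beta_2$, and the $24$-term cancellation for $\partial C$), then compute the Kronecker pairing matrices and invoke freeness of the known (co)homology groups to conclude generation and read off the cup products from pairings with $C$. The only minor divergence is that for $[\alpha_1]\smile[\alpha_2]=0$ the paper exhibits an explicit cochain-level primitive $f(x)=-x_3$ with $\partial f(x,y)=x_2y_1$, whereas you argue via vanishing of the pairings with $B_1,B_2$ and non-degeneracy; both are valid.
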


 {Many of these generators were likely already in the literature. The explicit formula for the generator of $H_3(\HH_3)$
may be new.}

The formal proof of Proposition~\ref{H3homology} unfortunately involves a lot of brute force calculations that are not very illuminating. For that reason, we will describe the process by which we computed each generator. Then the computations in the proof of Proposition~\ref{H3homology} show that these do in fact generate the (co)homology of $\HH_3$.

One can compute the 2-cocycles by creating a presentation of a central extension by hand, then computing the multiplication in the extension. These extensions can be built by ``blowing up'' the relations $ac=ca$ and $bc=ca$ respectively. See~\cite[Section 7.2]{arXiv:2204.10354} for a similar computation. 

For any elements $x,y$ in a group that commute with each other it is a useful fact that $[x|y]-[y|x]$ is a 2-cycle. For an abstract motivation of this fact; from the isomorphism in~\cite{iBrown:book-cohomology} we would get $[e|a]+[a|b]-[b|a]-[e|b]$. Then use the fact that $\partial([e|e|b]-[e|e|a])=[e|a]-[e|b]$

The 3-cycle $\gamma$ is just the cup product of $\beta_1$ and $\alpha_2$.

One can compute $C$ as follows. Start with the generator of $H_3(\HH_3)$ in 
the ``small resolution'' given by Huebschmann~\cite{HUEBSCHMANN_Nilpotent}. Following the proof of the independence of resolutions in~\cite[Chapter I.7]{iBrown:book-cohomology}, one can inductively find chain maps from Huebschmann's resolution to the standard resolution. Once the map on the third degree is computed, the image of the generator of $H_3(\HH_3)$ in Huebschmann's resolution is mapped to a generator in terms of the bar resolution. This computation is long, so we will only provide direct computations that demonstrate that $C$ generates the 3-homology.

\begin{proof}
First we compute the 1-cohomology as, $H^1(\HH_3)\cong \mathrm{Hom}(\HH_3,\Z)$ and it is generated by $\alpha_1$ and $\alpha_2$.

The 1-homology is isomorphic to the abelianization, generated by $a$ and $b$. Following the isomorphism from the bar resolution to abelianization, $a$ and $b$ map to $A_1$ and $A_2$ \cite[Chapter II.3]{iBrown:book-cohomology}.

We first check that $\beta_1$ satisfies the cocycle equation;
\begin{align*}
\partial\beta_1(x,y,z)=&\beta_1(x,y)-\beta_1(yz)+\beta_1(xy,z)-\beta_1(y,z)\\
=&x_3y_1+\frac12x_2y_1(y_1-1)\\
&-x_3(y_1+z_1)-\frac12x_2(y_1+z_1)(y_1+z_1-1)\\
&+(x_3+y_3+x_2y_1)z_1+\frac12(x_2+y_2)z_2(z_2-1)\\
&-y_3z_1-\frac12y_2z_1(z_1-1).
\end{align*}
Noting the identity $\frac12(y_1+z_1)(y_1+z_1-1)=\frac12 y_1(y_1-1)+\frac12z_1(z_1-1)+y_1z_1$ one can see that all terms cancel.

Note that $a\mapsto b$, $b\mapsto a$, $c\mapsto c^{-1}$ is an automorphism $\eta$ of $\HH_3$, so that $\eta(x_1,x_2,x_3)=(x_2,x_1,x_1x_2-x_3)$ and $\beta_2=\eta^*(\beta_1)$ so $\beta_2$ is a cocycle as well.

One can see that
$$\partial B_1=[c]-[ac]+[a]-([a]-[ac]+[c])=0$$
and the computation for $B_2$ is identical.

We compute that
\begin{align*}
\langle\beta_1,B_1\rangle&=1\\
\langle\beta_1,B_2\rangle&=0\\
\langle\beta_2,B_1\rangle&=0\\
\langle\beta_2,B_2\rangle&=1.
\end{align*}

We claim that this implies that $\beta_1$ and $\beta_2$ generate $H^2(\HH_3)$ and that $B_1$ and $B_2$ generate $H_2(\HH_3)$. To show this note that the pairing of a cohomology class with both $B_1$ and $B_2$ induces a map $H^2(\HH_3)$ to $\Z^2$. Since this map is surjective, it must also be injective; this relies on previous computations of the (co)homology groups. Thus, it is an isomorphism and so $\beta_1$ and $\beta_2$ generate $H^2(\HH_3)$. The argument that $B_1$ and $B_2$ generate $H_2(\HH_3)$ is analogous.

To show that $[\alpha_1]\smile[\alpha_2]=0$ we will let $\beta_3(x,y)=\alpha_2(x)\alpha_1(y)=x_2y_1$. Then $\beta_3$ is a cocycle representative of $[\alpha_2]\smile[\alpha_1]=-[\alpha_1]\smile[\alpha_2]$. Consider the function $f:\HH_3\rightarrow\Z$ defined by $f(x)=-x_3$. Then 
\begin{align*}
\partial f(x,y)&=f(x)-f(xy)+f(y)\\
&=-x_3+(x_2y_1+x_3+y_3)-y_3\\
&=x_2y_1\\
&=\beta_3(x,y).
\end{align*}
One may also deduce that $[\alpha_1]\smile[\alpha_2]=0$ by using the Gysin sequence~\cite[Theorem 3, Theorem 4]{gysin}.

Note the identity $b^{-1}a^{-1}=a^{-1}b^{-1}c$, in addition to those in the presentation. Using these relations, we compute that all the terms that appear in $\partial C$ are as follows:

\begin{center}
\begin{tabular}{c c c c}
$+[c^{-1}|bc]$& $-[c^{-1}|ac]$ &$+[b|ab^{-1}c^{-1}]$& $-[bc|ab^{-1}c^{-1}]$\\
$+[bc|ab^{-1}c^{-1}]$&$-[bc|c^{-1}]$&$+[ac|a^{-1}bc^{-1}]$&$-[ab^{-1}c^{-1}|a^{-1}bc^{-1}]$\\
$+[ab^{-1}c^{-1}|a^{-1}bc^{-1}]$&$-[ab^{-1}c^{-1}|bc]$&$+[c^{-1}|ac]$&$-[a^{-1}bc^{-1}|ac]$\\
$+[a^{-1}bc^{-1}|ac]$&$-[a^{-1}bc^{-1}|a]$&$+[bc|c^{-1}]$&$-[ac|c^{-1}]$\\
$-[ac|a^{-1}bc^{-1}]$&$+[ac|c^{-1}]$&$-[b|ab^{-1}c^{-1}]$&$+[a^{-1}bc^{-1}|ab^{-1}c^{-1}]$\\
$-[a^{-1}bc^{-1}|ab^{-1}c^{-1}]$&$+[a^{-1}bc^{-1}|a]$&$-[c^{-1}|bc]$&$+[ab^{-1}c^{-1}|bc]$
\end{tabular} 
\end{center}

One can check that each of these terms cancel out, so $\partial C=0$.

By construction we can see that $[\gamma]=[\beta_1]\smile[\alpha_2]$, and from this deduce that $\gamma$ obeys the cocycle equation. Going term-by-term, we compute that
$$\langle\gamma,C\rangle=0+1+0+0-1-(-1)=1$$
Because $H_3(\HH_3)\cong H^3(\HH_3)\cong\Z$ and
$\langle\gamma,C\rangle=1$
we can see that (co)homology classes of $C$ and $\gamma$ generate the $H_3(\HH_3)$ and $H^3(\HH_3)$ respectively.

To show the other facts about the cup product we define
\begin{align*}
\gamma_{1,1}(x,y,z)&=(x_3y_1+\frac12x_2y_1(y_1-1))z_1\\
\gamma_{2,1}(x,y,z)&=((x_1x_2-x_3)y_2+\frac12x_1y_2(y_2-1))z_1\\
\gamma_{2,2}(x,y,z)&=((x_1x_2-x_3)y_2+\frac12x_1y_2(y_2-1))z_2
\end{align*}
so that $\gamma_{i,j}$ is a cocycle representative of $[\beta_i]\smile[\alpha_j]$. Then
\begin{align*}
\langle\gamma_{1,1},C\rangle&=0+(-1)+0+0-(-1)-0=0\\
\langle\gamma_{2,1},C\rangle&=1+(-1)+0+0-(-1)-0=1\\
\langle\gamma_{2,2},C\rangle&=(-1)+1+0+0-1-(-1)=0.
\end{align*}

Since the map given by the universal coefficient theorem from $H^3(\HH_3)$ to $\Hom(H_3(\HH_3),\Z)$ is an isomorphism in this case we conclude that the cohomology class of 3-cocycles on $\HH_3$ is determined by their pairing with $C$. In particular, this implies the cup product structure claimed in the introduction is correct.

\end{proof}

\textbf{Acknowledgments} The second author thanks Rufus Willett for a stimulating discussion that led to some steps of the proof of Proposition~\ref{prop:projB}.

\begin{thebibliography}{10}

\bibitem{Bettaieb-Matthey-Valette}
H.~Bettaieb, M.~Matthey, and A.~Valette.
\newblock Unbounded symmetric operators in {$K$}-homology and the
  {B}aum-{C}onnes conjecture.
\newblock {\em J. Funct. Anal.}, 229(1):184--237, 2005.

\bibitem{iBrown:book-cohomology}
K.~S. Brown.
\newblock {\em Cohomology of groups}, volume~87 of {\em Graduate Texts in
  Mathematics}.
\newblock Springer-Verlag, New York, 1994.
\newblock Corrected reprint of the 1982 original.

\bibitem{Carrion-Dadarlat}
J.~R. Carri{\'o}n and M.~Dadarlat.
\newblock Quasi-representations of surface groups.
\newblock {\em J. Lond. Math. Soc. (2)}, 88(2):501--522, 2013.

\bibitem{DadCarrion:almost_flat}
J.~R. Carri\'{o}n and M.~Dadarlat.
\newblock Almost flat {K}-theory of classifying spaces.
\newblock {\em J. Noncommut. Geom.}, 12(2):407--438, 2018.

\bibitem{StuartW}
E.~Christensen, A.~M. Sinclair, R.~R. Smith, S.~A. White, and W.~Winter.
\newblock Perturbations of nuclear {$C^*$}-algebras.
\newblock {\em Acta Math.}, 208(1):93--150, 2012.

\bibitem{CGM:flat}
A.~Connes, M.~Gromov, and H.~Moscovici.
\newblock Conjecture de {N}ovikov et fibr\'es presque plats.
\newblock {\em C. R. Acad. Sci. Paris S\'er. I Math.}, 310(5):273--277, 1990.

\bibitem{Dad:kk-top}
M.~Dadarlat.
\newblock On the topology of the {K}asparov groups and its applications.
\newblock {\em J. Funct. Anal.}, 228(2):394--418, 2005.

\bibitem{BB}
M.~Dadarlat.
\newblock Group quasi-representations and index theory.
\newblock {\em J. Topol. Anal.}, 4(3):297--319, 2012.

\bibitem{AA}
M.~Dadarlat.
\newblock Group quasi-representations and almost flat bundles.
\newblock {\em J. Noncommut. Geom.}, 8(1):163--178, 2014.

\bibitem{CCC}
M.~Dadarlat.
\newblock Obstructions to matricial stability of discrete groups and almost
  flat {K}-theory.
\newblock {\em Adv. Math.}, 384:Paper No. 107722, 29, 2021.

\bibitem{DDD}
M.~Dadarlat.
\newblock Quasi-representations of groups and two-homology.
\newblock {\em Comm. Math. Phys.}, 393(1):267--277, 2022.

\bibitem{EEE}
M.~Dadarlat.
\newblock Non-stable groups.
\newblock to appear in {\em M{\"u}nster J. Math.}, arXiv:2304.04645 [math.OA],
  2023.

\bibitem{DadEil:AKK}
M.~Dadarlat and S.~Eilers.
\newblock Asymptotic unitary equivalence in {$KK$}-theory.
\newblock {\em $K$-Theory}, 23(4):305--322, 2001.

\bibitem{partA}
M.~Dadarlat and F.~Glebe.
\newblock Central extensions and almost representations.
\newblock {\em Int. Math. Res. Not.}, 2025.
\newblock issue 17.

\bibitem{DP1}
M.~Dadarlat and U.~Pennig.
\newblock A {D}ixmier--{D}ouady theory for strongly self-absorbing
  {$C^*$}-algebras.
\newblock {\em J. Reine Angew. Math.}, 718:153--181, 2016.

\bibitem{Dupont}
J.~L. Dupont.
\newblock {\em Curvature and characteristic classes}, volume Vol. 640 of {\em
  Lecture Notes in Mathematics}.
\newblock Springer-Verlag, Berlin-New York, 1978.

\bibitem{ESS-published}
S.~Eilers, T.~Shulman, and A.~P.~W. S{\o}rensen.
\newblock {$C^*$}-stability of discrete groups.
\newblock {\em Adv. Math.}, 373:107324, 41, 2020.

\bibitem{Exel-Loring:inv}
R.~Exel and T.~A. Loring.
\newblock Invariants of almost commuting unitaries.
\newblock {\em Funct. Anal.}, 95(2):364--376, 1991.

\bibitem{Fuchs-ab}
L.~Fuchs.
\newblock {\em Abelian groups}.
\newblock Springer Monographs in Mathematics. Springer, Cham, 2015.

\bibitem{MR4600056}
F.~Glebe.
\newblock Frobenius non-stability of nilpotent groups.
\newblock {\em Adv. Math.}, 428:Paper No. 109129, 21, 2023.

\bibitem{arXiv:2204.10354}
F.~Glebe.
\newblock A constructive proof that many groups with non-torsion 2-cohomology
  are not matricially stable.
\newblock {\em Groups, Geometry, and Dynamics}, 19:195--226, 2025.

\bibitem{Hanke}
B.~Hanke.
\newblock Positive scalar curvature, {K}-area and essentialness.
\newblock In {\em Global differential geometry}, volume~17 of {\em Springer
  Proc. Math.}, pages 275--302. Springer, Heidelberg, 2012.

\bibitem{Hanke-Schick}
B.~Hanke and T.~Schick.
\newblock Enlargeability and index theory.
\newblock {\em J. Differential Geom.}, 74(2):293--320, 2006.

\bibitem{hatcher}
A.~Hatcher.
\newblock {\em Algebraic topology}.
\newblock Cambridge University Press, Cambridge, 2002.

\bibitem{HigKas:BC}
N.~Higson and G.~Kasparov.
\newblock ${E}$-theory and ${K}{K}$-theory for groups which act properly and
  isometrically on {H}ilbert space.
\newblock {\em Invent. Math.}, 144(1):23--74, 2001.

\bibitem{Hirsch}
K.~A. Hirsch.
\newblock On infinite soluble groups (iv).
\newblock {\em Journal of the London Mathematical Society}, s1-27(1):81--85,
  1952.

\bibitem{gysin}
G.~Hochschild and J.-P. Serre.
\newblock Cohomology of group extensions.
\newblock {\em Transactions of the American Mathematical Society},
  74(1):110--134, 1953.

\bibitem{HUEBSCHMANN_Nilpotent}
J.~Huebschmann.
\newblock Cohomology of nilpotent groups of class 2.
\newblock {\em Journal of Algebra}, 126(2):400--450, 1989.

\bibitem{Hunger}
B.~Hunger.
\newblock Almost flat bundles and homological invariance of infinite {K}-area.
\newblock {\em New York J. Math.}, 25:687--722, 2019.

\bibitem{Hus:fibre}
D.~Husemoller.
\newblock {\em {Fibre Bundles}}.
\newblock Number~20 in Graduate Texts in Mathematics. Springer Verlag, New
  York, 3rd. edition, 1966, 1994.

\bibitem{Kas:inv}
G.~G. Kasparov.
\newblock Equivariant {$KK$}-theory and the {N}ovikov conjecture.
\newblock {\em Invent. Math.}, 91(1):147--201, 1988.

\bibitem{Kasparov-conspectus}
G.~G. Kasparov.
\newblock {$K$}-theory, group {$C^*$}-algebras, and higher signatures
  (conspectus).
\newblock In {\em Novikov conjectures, index theorems and rigidity, {V}ol. 1
  ({O}berwolfach, 1993)}, volume 226 of {\em London Math. Soc. Lecture Note
  Ser.}, pages 101--146. Cambridge Univ. Press, Cambridge, 1995.

\bibitem{Kasparov-Skandalis-kk}
G.~G. Kasparov and G.~Skandalis.
\newblock Groups acting on buildings, operator {$K$}-theory, and {N}ovikov's
  conjecture.
\newblock {\em $K$-Theory}, 4(4):303--337, 1991.

\bibitem{Kazhdan-epsilon}
D.~Kazhdan.
\newblock On {$\varepsilon $}-representations.
\newblock {\em Israel J. Math.}, 43(4):315--323, 1982.

\bibitem{diffofbundles}
S.~Kobayashi.
\newblock {\em Differential Geometry of Complex Vector Bundles}.
\newblock Iwanami Shoten, Publishers and Princeton University Press, 1987.

\bibitem{Kubota1}
Y.~Kubota.
\newblock The relative {M}ishchenko-{F}omenko higher index and almost flat
  bundles. {I}. {T}he relative {M}ishchenko-{F}omenko index.
\newblock {\em J. Noncommut. Geom.}, 14(3):1209--1244, 2020.

\bibitem{Kubota3}
Y.~Kubota.
\newblock Almost flat relative vector bundles and the almost monodromy
  correspondence.
\newblock {\em J. Topol. Anal.}, 14(2):353--382, 2022.

\bibitem{Kubota2}
Y.~Kubota.
\newblock The relative {M}ishchenko-{F}omenko higher index and almost flat
  bundles {II}: {A}lmost flat index pairing.
\newblock {\em J. Noncommut. Geom.}, 16(1):215--264, 2022.

\bibitem{Lafforgue}
V.~Lafforgue.
\newblock {$K$}-th\'eorie bivariante pour les alg\`ebres de {B}anach et
  conjecture de {B}aum-{C}onnes.
\newblock {\em Invent. Math.}, 149(1):1--95, 2002.

\bibitem{Packer}
S.~T. Lee and J.~A. Packer.
\newblock {$K$}-theory for {$C^*$}-algebras associated to lattices in
  {H}eisenberg {L}ie groups.
\newblock {\em J. Operator Theory}, 41(2):291--319, 1999.

\bibitem{MR1951251}
M.~Matthey.
\newblock Mapping the homology of a group to the {$K$}-theory of its
  {$C^*$}-algebra.
\newblock {\em Illinois J. Math.}, 46(3):953--977, 2002.

\bibitem{MR2041902}
M.~Matthey.
\newblock The {B}aum-{C}onnes assembly map, delocalization and the {C}hern
  character.
\newblock {\em Adv. Math.}, 183(2):316--379, 2004.

\bibitem{Milnor}
J.~Milnor.
\newblock On the existence of a connection with curvature zero.
\newblock {\em Comment. Math. Helv.}, 32:215--223, 1958.

\bibitem{MR832541}
A.~Phillips and D.~Stone.
\newblock Lattice gauge fields, principal bundles and the calculation of
  topological charge.
\newblock {\em Comm. Math. Phys.}, 103(4):599--636, 1986.

\bibitem{MR1124246}
A.~V. Phillips and D.~A. Stone.
\newblock A new approach to computing {C}hern classes of {${\rm U}(p)$}-valued
  lattice gauge fields.
\newblock volume~17, pages 621--623. 1990.
\newblock Lattice 89 (Capri, 1989).

\bibitem{RosSho:UCT}
J.~Rosenberg and C.~Schochet.
\newblock {The K\"unneth theorem and the universal coefficient theorem for
  Kasparov's generalized $K$-functor}.
\newblock {\em Duke Math. J.}, 55(2):431--474, 1987.

\bibitem{Schochet-primer}
C.~L. Schochet.
\newblock {\em A {P}ext primer: pure extensions and {$\lim^1$} for infinite
  abelian groups}, volume~1 of {\em New York Journal of Mathematics. NYJM
  Monographs}.
\newblock State University of New York, University at Albany, Albany, NY, 2003.

\bibitem{Skandalis:K-nuclear}
G.~Skandalis.
\newblock {Une Notion de Nucl\'earit\'e en K-th\'eorie}.
\newblock {\em $K$-theory}, 1(5):549--573, 1988.

\bibitem{TWW:quasidiagonality}
A.~Tikuisis, S.~White, and W.~Winter.
\newblock Quasidiagonality of nuclear {$C^\ast$}-algebras.
\newblock {\em Ann. of Math. (2)}, 185(1):229--284, 2017.

\bibitem{Toda}
H.~Toda.
\newblock Cohomology of classifying spaces.
\newblock In {\em Homotopy theory and related topics ({K}yoto, 1984)}, volume~9
  of {\em Adv. Stud. Pure Math.}, pages 75--108. North-Holland, Amsterdam,
  1987.

\bibitem{Tu:BC}
J.-L. Tu.
\newblock La conjecture de {B}aum-{C}onnes pour les feuilletages moyennables.
\newblock {\em $K$-Theory}, 17(3):215--264, 1999.

\bibitem{Tu:gamma}
J.-L. Tu.
\newblock The gamma element for groups which admit a uniform embedding into
  {H}ilbert space.
\newblock In {\em Recent advances in operator theory, operator algebras, and
  their applications}, volume 153 of {\em Oper. Theory Adv. Appl.}, pages
  271--286. Birkh\"auser, Basel, 2005.

\bibitem{Voi:unitaries}
D.~Voiculescu.
\newblock {Asymptotically commuting finite rank unitary operators without
  commuting approximats}.
\newblock {\em Acta Sci. Math. (Szeged)}, 45:429--431, 1983.

\bibitem{Yu:BC}
G.~Yu.
\newblock The coarse {B}aum-{C}onnes conjecture for spaces which admit a
  uniform embedding into {H}ilbert space.
\newblock {\em Invent. Math.}, 139(1):201--240, 2000.

\end{thebibliography}
{\small

}

\end{document}